\def\@tocline#1#2#3#4#5#6#7{\relax
  \ifnum #1>\c@tocdepth 
  \else
    \par \addpenalty\@secpenalty\addvspace{#2}%
    \begingroup \hyphenpenalty\@M
    \@ifempty{#4}{%
      \@tempdima\csname r@tocindent\number#1\endcsname\relax
    }{%
      \@tempdima#4\relax
    }%
    \parindent\z@ \leftskip#3\relax \advance\leftskip\@tempdima\relax
    \rightskip\@pnumwidth plus4em \parfillskip-\@pnumwidth
    #5\leavevmode\hskip-\@tempdima
      \ifcase #1
       \or\or \hskip 1em \or \hskip 2em \else \hskip 3em \fi%
      #6\nobreak\relax
    \dotfill\hbox to\@pnumwidth{\@tocpagenum{#7}}\par
    \nobreak
    \endgroup
  \fi}
\def\N{{\mathbb{N}}}
\def\Q{{\mathbb{Q}}}
\def\Z{{\mathbb{Z}}} 
\def\theta{{\vartheta}} 
\def\phi{{\varphi}}
\def\epsilon{{\varepsilon}}
\mathchardef\ordinarycolon\mathcode`\: \mathcode`\:=\string"8000
\newtheorem{theorem}{Theorem}[section]
\newtheorem{lemma}[theorem]{Lemma}
\newtheorem{proposition}[theorem]{Proposition}
\newtheorem{corollary}[theorem]{Corollary}
\newtheorem{mainlemma}[theorem]{Main Lemma}
\theoremstyle{definition} 
\newtheorem{definition}[theorem]{Definition}
\newtheorem{example}[theorem]{Example}
\newtheorem{notation}[theorem]{Notation}
\newtheorem{fact}[theorem]{Facts}
\newtheorem{remark}[theorem]{Remark}
\numberwithin{equation}{section} 
\def\moverlay{\mathpalette\mov@rlay}
\def\mov@rlay#1#2{\leavevmode\vtop{%
   \baselineskip\z@skip \lineskiplimit-\maxdimen
   \ialign{\hfil$\m@th#1##$\hfil\cr#2\crcr}}}
\newcommand{\charfusion}[3][\mathord]{
    #1{\ifx#1\mathop\vphantom{#2}\fi
        \mathpalette\mov@rlay{#2\cr#3}
      }
    \ifx#1\mathop\expandafter\displaylimits\fi}
\begin{document} 
\title[A Baer-Krull Theorem for quasi-ordered rings]{Compatibility of quasi-orderings and valuations; A Baer-Krull Theorem for quasi-ordered Rings}

\author{Salma Kuhlmann, Simon M\"uller}

\begin{center} {\small{\today}} \end{center}

\begin{abstract}
In his work of 1969, Merle E. Manis introduced valuations on commutative rings. Recently, the class of totally quasi-ordered rings was developed in \cite{sim}. In the present paper, given a quasi-ordered ring $(R,\preceq)$ and a valuation $v$ on $R,$ we establish the notion of compatibility between $v$ and $\preceq,$ leading to a definition of the rank of $(R,\preceq).$ \vspace{1mm}

\noindent
Moreover, we prove a Baer-Krull Theorem for quasi-ordered rings: fixing a Manis valuation $v$ on $R,$ we characterize all $v$-compatible quasi-orders of $R$ by lifting the quasi-orders from the residue class domain to $R$ itself.
\end{abstract}

\maketitle

\section{Introduction}

\noindent
There have been several attempts to find a uniform approach to orders and valuations. In \cite{efrat} for instance, Ido Efrat simply defined localities on a field to be either orders or valuations. S.M. Fakhruddin introduced the notion of (totally) quasi-ordered fields $(K,\preceq)$ and proved the dichotomy, that any such field is either an ordered field or else there exists a valuation $v$ on $K$ such that $x \preceq y$ if and only if $v(y) \leq v(x)$ (\cite[Theorem 2.1]{fakh}). Thus, Fakhruddin found a way to treat these two classes simultaneously. Inspired by this result, the second author of this paper established the said dichotomy for commutative ring with $1$ (\cite[Theorem 4.6]{sim}). \vspace{1mm}

\noindent
The aim of the present paper is to continue our study of quasi-ordered rings. To this end we consider important results from real algebra, which are also meaningful if the order is replaced by a quasi-order. The paper is organized as follows:
\vspace{1mm}

\noindent
In section 2 we briefly recall ordered and valued rings, and give our definition of quasi-ordered rings (see Definition \ref{qoring}). Moreover, we quote the two theorems that we want to establish for this class (see Theorems \ref{compfield} and \ref{baerkrullfield}). \vspace{1mm}

\noindent
Section 3 deals with the notion of compatibility between quasi-orders and valuations. Given a quasi-ordered ring $(R,\preceq),$ we first give a characterization of all Manis valuations (i.e. surjective valuations) $v$ on $R$ that are compatible with $\preceq$ (see Theorem \ref{qoringcomp}). In case where $\preceq$ also comes from a Manis valuation, say $w,$ we will show that $v$ is compatible with $\preceq$ if and only if $v$ is a coarsening of $w$ (see Lemma \ref{coarse}), leading to a characterization of all the Manis coarsenings $v$ of $w$ (see Theorem \ref{coarser}). We conclude this section by developing a notion of rank of a quasi-ordered ring 
(see Definition \ref{qoringrank}).   
\vspace{1mm}

\noindent
In the fourth and final section we establish Baer-Krull Theorems for quasi-ordered rings (see Theorem \ref{BaerKrullI}, respectively Corollary \ref{BaerKrullII}, Theorem \ref{BaerKrullIII}, Theorem \ref{BaerKrullIV}). Once these are proven, we can not only generalize the classical Baer-Krull Theorem to ordered rings (see Corollary \ref{BaerKrullorings}), but also characterize all Manis refinements $w$ of a valued ring $(R,v),$ given that $v$ is also Manis (see Corollary \ref{BaerKrullqoringsII}).

\section{Preliminaries}
\noindent
Here we briefly introduce some basic results concerning valued, ordered and quasi-ordered rings. Moreover, we introduce the theorems, which we aim to establish for quasi-ordered rings (see Theorems \ref{ofieldcomp} and \ref{baerkrullfield}). Throughout this section let $R$ always denote a commutative ring with $1.$

\begin{definition} \label{ringval} (see \cite[VI. 3.1]{bo}) Let $(\Gamma,+,\leq)$ be an ordered abelian group and $\infty$ a symbol such that $\gamma < \infty$ and 
$\infty = \infty + \infty = \gamma + \infty = \infty + \gamma$ for all $\gamma \in \Gamma.$ \\
A map $v: R \to \Gamma \cup \{\infty\}$ is called a \textbf{valuation} on $R$ if for all $x,y \in R:$ \vspace{1mm}

\begin{itemize}
 \item[(V1)] $v(0) = \infty,$ \vspace{1mm}

 \item[(V2)] $v(1) = 0,$ \vspace{1mm}

 \item[(V3)] $v(xy) = v(x) + v(y),$ \vspace{1mm}

 \item[(V4)] $v(x+y) \geq \min\{v(x),v(y)\}.$
\end{itemize} \vspace{1mm}

\noindent
We always assume that $\Gamma$ is the group generated by $\{v(x): x \in v^{-1}(\Gamma)\}$ and call it the \textbf{value group} of $R.$ We also denote it by $\Gamma_v.$ We call $v$ \textbf{trivial} if $\Gamma_v$ is trivial, i.e. if $\Gamma_v = \{0\}.$ The set $\mathfrak{q}_v:= \mathrm{supp}(v) := v^{-1}(\infty)$ is called the \textbf{support} of $v.$
\end{definition}

\begin{fact} \label{fact} \hspace{7cm}
\begin{enumerate}
 \item An easy consequence of the axioms (V1) - (V4) is that $\mathfrak{q}_v$ is a prime ideal of $R.$ \vspace{1mm}

 \item In general, $v$ is not surjective, as $v(R\backslash \mathfrak{q}_v)$ is not necessarily closed under additive inverses. However, if $x$ is a unit, then 
$v(x^{-1}) = -v(x).$ \vspace{1mm}

 \item The subring $R_v := \{x \in R: v(x) \geq 0\}$ of $R$ is said to be the valuation ring of $v.$ The prime ideal $I_v := \{x \in R: v(x) > 0\}$ of $R$ is called the valuation ideal. If $R$ is a field, $R_v$ is a local ring with maximal ideal $I_v.$
\end{enumerate}
\end{fact}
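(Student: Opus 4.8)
The plan is to verify the three assertions separately; each follows by a short computation from the axioms (V1)--(V4), and I would carry them out in the order (1), (2), (3) after recording one small preliminary.

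First I would prove the auxiliary identity $v(-x) = v(x)$ for all $x \in R$. Applying (V3) to $(-1)(-1) = 1$ and using (V2) gives $2\,v(-1) = 0$ in $\Gamma_v$ (note $v(-1) \neq \infty$, since otherwise $v(1) = \infty$); as every ordered abelian group is torsion-free, this forces $v(-1) = 0$, and then $v(-x) = v(-1) + v(x) = v(x)$ by (V3). With this in hand, assertion (1) is immediate: $\mathfrak{q}_v = v^{-1}(\infty)$ is closed under addition by (V4) (if $v(x) = v(y) = \infty$ then $v(x+y) \geq \min\{v(x),v(y)\} = \infty$), closed under negation by the auxiliary identity, and absorbs multiplication by $R$ because $v(rx) = v(r) + \infty = \infty$; it is a proper subset since $v(1) = 0 \neq \infty$ by (V2). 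Finally it is prime because the sum of two elements of the group $\Gamma_v$ stays in $\Gamma_v$, so $v(x) + v(y) = v(xy) = \infty$ can only occur if $v(x) = \infty$ or $v(y) = \infty$.

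For assertion (2), if $x$ is a unit then (V3) and (V2) give $v(x) + v(x^{-1}) = v(1) = 0$; in particular neither value equals $\infty$ (otherwise $v(1) = \infty$), so both lie in $\Gamma_v$ and $v(x^{-1}) = -v(x)$. That $v$ need not be surjective I would illustrate with an explicit example in which $v(R \setminus \mathfrak{q}_v)$ fails to be symmetric about $0$ --- for instance the valuation $f \mapsto -\deg f$ on $\R[X]$, whose image on nonzero polynomials is $-\N$, so that no positive element of $\Z$ is attained.

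For assertion (3): closure of $R_v$ under addition follows from (V4), under negation from $v(-x)=v(x)$, under multiplication from (V3), and $1 \in R_v$ from (V2), so $R_v$ is a subring. The same computations with strict inequalities show $I_v$ is an additive subgroup, and $v(rx) = v(r) + v(x) > 0$ for $r \in R_v$, $x \in I_v$ shows it is an ideal of $R_v$, which is prime because $v(x) + v(y) > 0$ forces $v(x) > 0$ or $v(y) > 0$. Finally, if $R$ is a field then every $x \in R_v \setminus I_v$ satisfies $v(x) = 0$ and, being a unit of $R$, has $v(x^{-1}) = -v(x) = 0$, whence $x^{-1} \in R_v$; thus $R_v \setminus I_v$ consists of units of $R_v$ and $I_v$ is the unique maximal ideal of $R_v$. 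None of these steps is a genuine obstacle; the only point requiring a moment's care is the torsion-freeness argument used to conclude $v(-1) = 0$.
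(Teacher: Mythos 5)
Your proof is correct, and there is nothing to compare it against: the paper records these assertions as background \emph{Facts} without proof, so any verification had to be supplied from scratch, and yours is the standard one (torsion-freeness of $\Gamma$ to get $v(-1)=0$, hence $v(-x)=v(x)$; the $\min$-inequality and multiplicativity for the ideal and primality claims; ``non-units form an ideal'' for locality in the field case). Two small remarks. First, the paper's wording ``the prime ideal $I_v$ of $R$'' is sloppy: when $v$ takes negative values $I_v$ need not be an ideal of $R$ (e.g.\ for the valuation of Example \ref{exp2} one has $X \in I_v$ but $XY \notin I_v$), though it is always a prime ideal of $R_v$ and is ``prime as a subset'' of $R$ in the sense that $v(x)+v(y)>0$ forces $v(x)>0$ or $v(y)>0$; you proved exactly the correct statement, which is also all the paper ever uses. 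Second, your non-surjectivity example $f \mapsto -\deg f$ is fine and is essentially the paper's own Example \ref{nomanis}(1); just note explicitly that its value group is $\Z$ (generated by the attained values $-\N$), so failure of surjectivity is indeed failure of closure under additive inverses, as the Fact asserts.
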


\noindent
We conclude our introduction of valuations with a simple but very helpful lemma.

\begin{lemma} \label{valmin} Let $(R,v)$ be a valued ring and $x,y \in R$ such that $v(x) \neq v(y).$ Then $v(x+y) = \min\{v(x),v(y)\}.$
\begin{proof} Completely analogue as in the field case, see for instance \cite[(1.3.4)]{prestel}. 
\end{proof}
\end{lemma}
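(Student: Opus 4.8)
The plan is to mimic the classical ultrametric argument, the only subtlety being that we must first check $v(-1)=0$ since we are working over a ring rather than a field. Without loss of generality assume $v(x) < v(y)$, so that $\min\{v(x),v(y)\} = v(x)$. By (V4) we immediately get $v(x+y) \geq v(x)$, so it suffices to rule out the strict inequality $v(x+y) > v(x)$.

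First I would record the auxiliary fact that $v(-z) = v(z)$ for all $z \in R$. Indeed, applying (V3) to $(-1)(-1) = 1$ together with (V2) gives $2\,v(-1) = v(1) = 0$ in $\Gamma$; since $\Gamma$ is an ordered abelian group and hence torsion-free, this forces $v(-1) = 0$, and then (V3) yields $v(-z) = v(-1) + v(z) = v(z)$.

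Now suppose toward a contradiction that $v(x+y) > v(x)$. Writing $x = (x+y) + (-y)$ and applying (V4) gives
\[
v(x) \;\geq\; \min\{v(x+y),\,v(-y)\} \;=\; \min\{v(x+y),\,v(y)\}.
\]
Both $v(x+y)$ and $v(y)$ are strictly greater than $v(x)$ by assumption, so the right-hand side is strictly greater than $v(x)$, contradicting $v(x) \geq v(x)$ being sharp — more precisely, we obtain $v(x) > v(x)$, which is absurd. Hence $v(x+y) = v(x) = \min\{v(x),v(y)\}$, as claimed. I do not anticipate any real obstacle here; the one place to be careful is not to take $v(-1)=0$ for granted, since in the ring setting $-1$ need not be a unit power in any obvious way — but torsion-freeness of the value group settles it cleanly.
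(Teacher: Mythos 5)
Your argument is correct and is exactly the classical ultrametric argument that the paper invokes by reference to the field case (\cite[(1.3.4)]{prestel}): decompose $x=(x+y)+(-y)$, use (V4), and justify $v(-y)=v(y)$ via $2v(-1)=v(1)=0$ and torsion-freeness of the ordered value group, which works verbatim in the ring setting. No gap; your extra care about $v(-1)=0$ is precisely the point that makes the field proof carry over.
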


\noindent 
Let us now turn towards the notion of orders on rings. For the sake of convenience, they are usually identified with positive cones $P \subset R,$ where $x \in P$ expresses that $x$ is non-negative.

\begin{definition} \label{ringorder} (see \cite[p.29]{marsh}) A \textbf{positive cone} of $R$ is a subset $P \subset R$ such that the following conditions are satisfied:
\begin{itemize}
\item[(P1)] $P \cup -P = R,$ \vspace{1mm}

\item[(P2)] $\mathfrak{p}:= P \cap -P$ is a prime ideal of $R,$ called the \textbf{support} of $R,$ \vspace{1mm}

\item[(P3)] $P \cdot P \subseteq P,$ \vspace{1mm}

\item[(P4)] $P + P \subseteq P.$
\end{itemize}
\end{definition}

\begin{definition} \label{oring} (see \cite[Definition 2.3]{sim}) Let $\leq$ be a binary, reflexive, transitive and total relation on $R.$ Then $(R,\leq)$ is called an \textbf{ordered ring} if for all $x,y,z \in R:$ 
\begin{itemize}
 \item[(O1)] $0 < 1,$ \vspace{1mm}

 \item[(O2)] $xy \leq 0 \Rightarrow x \leq 0 \lor y \leq 0,$ \vspace{1mm}

 \item[(O3)] $x \leq y, \; 0 \leq z \Rightarrow xz \leq yz,$ \vspace{1mm}

 \item[(O4)] $x \leq y \Rightarrow x+z \leq y+z.$
 \end{itemize}
\end{definition}

\noindent
The set of all orders of $R$ is in $1:1$ correspondence with the set of all positives cones of $R$ via $x \leq y \Leftrightarrow y-x \in P.$ Note that if $R$ is a field, then (P2) yields that $\mathfrak{p} = \{0\},$ which precisely means that the corresponding order $\leq$ is anti-symmetric (and vice versa). \vspace{1mm}

\noindent
Recall from the introduction that some quasi-orders $\preceq$ of $R$ are induced by a valuation $v$ via $x \preceq y$ if and only if $v(y) \leq v(x).$ In this case all elements are non-negative. Hence, positive cones are inappropriate to deal with quasi-orders. So in order to compare ordered and quasi-ordered rings, it is necessary to stick to Definition \ref{oring}. \vspace{1mm}

\noindent
Let us now have a closer look at quasi-ordered rings. As mentioned above, Fakhruddin developped a notion of quasi-ordered fields $(K,\preceq)$ and was able to show that quasi-ordered fields are either ordered fields or else $\preceq$ comes from a valuation as above (see \cite[Theorem 2.1]{fakh}). In \cite{sim}, the second author of this paper generalized this result to commutative rings with $1,$ leading to the following result:

\begin{definition} \label{qoring} (see \cite[Definition 3.2]{sim}) Let $R$ be a commutative ring with $1$ and $\preceq$ a binary, reflexive, transitive and total relation on $R.$ If $x,y \in R,$ we write $x \sim y$ if $x \preceq y$ and $y \preceq x,$ and we write $x \prec y$ if $x \preceq y$ but $y \npreceq x.$ \\
The pair $(R,\preceq)$ is called a \textbf{quasi-ordered ring} if for all $x,y,z \in R:$
\begin{itemize}
\item[(QR1)] $0 \prec 1,$ \vspace{1mm}

\item[(QR2)] $xy \preceq 0 \Rightarrow x \preceq 0 \lor y \preceq 0,$ \vspace{1mm}

\item[(QR3)] $x \preceq y, \; 0 \preceq z \Rightarrow xz \preceq yz,$ \vspace{1mm}

\item[(QR4)] $x \preceq y, \; z \nsim y \Rightarrow x+z \preceq y+z,$ \vspace{1mm}

\item[(QR5)] If $0 \prec z,$ then $xz \preceq yz \Rightarrow x \preceq y.$
\end{itemize}
We write $E_x$ for the equivalence class of $x$ w.r.t. $\sim$. $E_0$ is called the \textbf{support} of $\preceq$. 
\end{definition}

\vspace{1mm}

\noindent
In \cite[Theorem 4.6]{sim}, the second author proved that a quasi-ordered ring $(R,\preceq)$ is either an ordered ring or a valued ring $(R,v)$ such that $x \preceq y \Leftrightarrow v(y) \leq v(x).$ Thus, via quasi-ordered rings,
we can treat ordered and valued rings simultaneously.

\begin{remark} \label{rema} \hspace{7cm}
\begin{enumerate}
\item If $(R,\preceq)$ is a quasi-ordered ring with $x \sim 0$ and $y \nsim 0,$ then $x+y \sim y$ (see \cite[Lemma 3.6]{sim}). This result will be useful later on. \vspace{1mm}

\item The support $E_0$ is a prime ideal of $R$ (see \cite[Proposition 3.8]{sim}). \vspace{1mm}

\item The ``new'' axiom (QR5) is crucial for the dichotomy, see \cite[Proposition 3.1]{sim}. Moreover, note that it easily implies (QR2). Indeed, if $xy \preceq 0$ and $0 \prec x,$ then (QR2) yields $y \preceq 0.$ However, we decided to keep axiom (QR2) in order to preserve the analogy between ordered and quasi-ordered rings. \vspace{1mm}

\item If $R$ is a field, then the axioms (QR1) and (QR2) can be replaced with the axiom $x \sim 0 \Rightarrow x = 0,$ while (QR5) becomes unnecessary. As a matter of fact, this is precisely how Fakhruddin introduced quasi-ordered fields in the first place (see \cite[2]{fakh}). 
\end{enumerate}
\end{remark}

\noindent
Later on, we will also use the following variant of axiom (QR5).

\begin{lemma} \label{QR5} Let $(R,\preceq)$ be a quasi-ordered ring and $x,y,z \in R.$ If $z \nsim 0,$ then $xz \sim yz \Rightarrow x \sim y.$
\begin{proof} For $0 \prec z,$ this is the same as (QR5). So suppose that $z \prec 0.$ Then $0 \prec -z.$ Thus, (QR5) tells us $-x \sim -y.$ Assume for a contradiction that $x \nsim y,$ without loss of generality $x \prec y.$ By transitivity of $\sim$ we get either 
$x \nsim -x,-y$ or $y \nsim -x,-y.$ If $y \nsim -x,-y,$ we obtain from $-x \preceq -y$ that $y-x \preceq 0.$ If $x \nsim 0,$ then (QR4) yields $y \preceq x.$ Otherwise, the same follows from Remark \ref{rema}(2). Hence, there is a contradiction anyway. So suppose that 
$x \nsim -x,-y.$ Then $-x \preceq -y$ implies $0 \preceq x-y,$ and $-y \preceq -x$ implies $x-y \preceq 0$ via (QR4). So $x-y \in E_0,$ but then also $y-x \in E_0$ by the previous lemma. Thus, $y-x \sim 0.$ From (QR4) (if $x \nsim 0),$ respectively Remark \ref{rema}(2) 
(if $x \sim 0),$ we obtain $y \preceq x,$ again a contradiction.
\end{proof}
\end{lemma}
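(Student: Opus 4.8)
My plan is to split on the sign of $z$. Since $z \nsim 0$ and $\preceq$ is total, either $0 \prec z$ or $z \prec 0$. In the first case there is nothing subtle: $xz \sim yz$ unfolds into $xz \preceq yz$ and $yz \preceq xz$, and applying (QR5) twice with the positive element $z$ gives $x \preceq y$ and $y \preceq x$, i.e.\ $x \sim y$. So the whole difficulty is concentrated in the case $z \prec 0$, where (QR5) cannot be applied to $z$ directly.

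Assume $z \prec 0$. The first step is to manufacture a positive element to feed into (QR5). Since $E_0$ is a prime ideal (Remark \ref{rema}(2)), $z \nsim 0$ forces $-z \nsim 0$; adding $-z$ to $z \preceq 0$ is then legitimate by (QR4) and yields $0 \preceq -z$, hence $0 \prec -z$. Now I rewrite $xz = (-x)(-z)$ and $yz = (-y)(-z)$ and apply (QR5) with the positive element $-z$ to each of $xz \preceq yz$ and $yz \preceq xz$; this gives $-x \preceq -y$ and $-y \preceq -x$, that is, $-x \sim -y$. It remains to deduce $x \sim y$ from $-x \sim -y$.

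For this last deduction I would argue by contradiction. Since the claim is symmetric in $x$ and $y$, assume $x \prec y$. The naive move --- adding $x+y$ to $-x \preceq -y$ --- is blocked because (QR4) only permits adding an element that is $\nsim$ the right-hand side. The key observation is that, since $-x \sim -y$, transitivity of $\sim$ rules out having both $x \sim -x$ and $y \sim -x$ (that would force $x \sim y$), and from this one deduces that at least one of the following holds: (A) $x \nsim -x$ and $x \nsim -y$; or (B) $y \nsim -x$ and $y \nsim -y$. In case (A), adding $x$ to $-x \preceq -y$ and to $-y \preceq -x$ is legitimate by (QR4) and gives $0 \preceq x - y$ and $x - y \preceq 0$; hence $x - y \in E_0$, so $y - x \in E_0$, and then $y \sim x$ follows from Remark \ref{rema}(1) when $x \nsim 0$ and from $E_0$ being an additive group when $x \sim 0$ --- a contradiction. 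In case (B), adding $y$ to $-x \preceq -y$ gives $y - x \preceq 0$, and adding $x$ to this --- via (QR4) if $x \nsim 0$, and via Remark \ref{rema}(1) and (2) if $x \sim 0$ --- yields $y \preceq x$, again contradicting $x \prec y$.

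The step I expect to be the real obstacle is exactly this sign change $z \prec 0$: one must first produce a genuinely positive element before (QR5) is applicable, and then, because $\sim$ is not a priori compatible with adding arbitrary ring elements, one is forced into the case analysis (A)/(B) dictated by the side condition in (QR4), keeping track throughout of whether the relevant elements lie in the support $E_0$. Everything else --- the case $0 \prec z$, and the reductions via $-z$ and via $(-x)(-z) = xz$ --- is routine.
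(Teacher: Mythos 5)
Your proposal is correct and follows essentially the same route as the paper: reduce to the case $z \prec 0$, obtain $0 \prec -z$ and apply (QR5) to $(-x)(-z) \sim (-y)(-z)$ to get $-x \sim -y$, then derive a contradiction from $x \prec y$ via the same two-case analysis (your (A)/(B) is exactly the paper's ``$x \nsim -x,-y$ or $y \nsim -x,-y$'' dichotomy), handling the side condition of (QR4) and the subcase $x \sim 0$ with Remark \ref{rema} just as the paper does.
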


\noindent
We conclude this introductive section by recalling the Theorems \ref{compfield} and \ref{baerkrullfield} below, which we will establish for quasi-ordered rings in this paper. So let $(K,\leq)$ be an ordered field. Recall that a valuation $v$ on $K$ is said to be compatible with $\leq,$ if $0 \leq x \leq y$ implies 
$v(y) \leq v(x)$ (see for instance \cite[Definition 2.4]{lam}). A subset $S \subseteq K$ is convex (w.r.t. $\leq$), if from $x \leq y \leq z$ and $x,z \in S$ follows $y \in S.$

\begin{theorem} \label{ofieldcomp} $\mathrm{(see}$ \cite[Theorem 2.3 and Proposition 2.9]{lam} $\mathrm{or}$ \cite[Proposition 2.2.4]{prestel}$\mathrm{)}$  \label{compfield} Let $(K, \leq)$ be an ordered field and let $v$ be a valuation on $K.$ The following are equivalent:
\begin{enumerate}
 \item $v$ is compatible with $\leq,$ \vspace{1mm}

 \item the valuation ring $K_v$ is convex, \vspace{1mm}

 \item the maximal ideal $I_v$ is convex, \vspace{1mm}

 \item $I_v < 1,$ \vspace{1mm}

 \item $\leq$ induces canonically via the residue map $\phi_v: K_v \to Kv:= K_v/I_v, \: x \mapsto x + I_v$ an order $\leq'$ on the residue field $Kv.$ 
\end{enumerate}
\end{theorem}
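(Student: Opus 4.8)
The plan is to establish the cyclic implications $(1)\Rightarrow(2)\Rightarrow(3)\Rightarrow(4)\Rightarrow(1)$, which are elementary manipulations inside $(K,\leq)$, and then to hook condition $(5)$ onto this cycle via $(3)\Rightarrow(5)\Rightarrow(4)$. Throughout one uses Fact \ref{fact}(3) (so that $K_v$ is local with maximal ideal $I_v$, the units of $K_v$ being exactly the elements of value $0$), Lemma \ref{valmin}, the identities $v(-x)=v(x)$ and $v(x^{-1})=-v(x)$ for $x$ a unit of $K$, and the fact that an order on a field is anti-symmetric.

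For $(1)\Rightarrow(2)$: given $x\leq y\leq z$ with $x,z\in K_v$, if $0\leq y$ then $0\leq y\leq z$ and compatibility give $v(y)\geq v(z)\geq 0$, while if $y<0$ then $0\leq -y\leq -x$ with $-x\in K_v$ gives $v(y)=v(-y)\geq v(-x)\geq 0$; so $y\in K_v$. For $(2)\Rightarrow(3)$: given $x\leq y\leq z$ with $x,z\in I_v$, convexity of $K_v$ yields $v(y)\geq 0$, and if $v(y)=0$ then the quotient $y/z$ (if $0\leq y$) or $y/x$ (if $y<0$) lies in $(0,1]$, hence in $K_v$ by convexity, yet has strictly negative value --- impossible; so $v(y)>0$. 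For $(3)\Rightarrow(4)$: if some $\epsilon\in I_v$ satisfied $1\leq\epsilon$, then $0\leq 1\leq\epsilon$ with $0,\epsilon\in I_v$ would give $1\in I_v$, contradicting $v(1)=0$; hence $\epsilon<1$ for every $\epsilon\in I_v$. For $(4)\Rightarrow(1)$: given $0\leq x\leq y$ one may assume $x\neq 0$, so $0<x\leq y$; if $v(y)>v(x)$ then $v(y/x)>0$, so $y/x\in I_v$ and thus $y/x<1$ by $(4)$, contradicting $y/x\geq 1$; hence $v(y)\leq v(x)$.

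To incorporate $(5)$: since orders on the field $Kv$ correspond to positive cones, $(5)$ says precisely that $P':=\phi_v(P\cap K_v)$ is a positive cone of $Kv$, where $P$ is the positive cone of $\leq$; now $P'$ automatically satisfies (P1), (P3) and (P4), so the only point at issue is (P2), i.e.\ $P'\cap -P'=\{0\}$. For $(3)\Rightarrow(5)$: if $\phi_v(p)=-\phi_v(q)$ with $p,q\in P\cap K_v$, then $p+q\in I_v$, and from $0\leq p\leq p+q$ with $0,p+q\in I_v$, convexity of $I_v$ gives $p\in I_v$, i.e.\ $\phi_v(p)=0$. For $(5)\Rightarrow(4)$: if some $\epsilon\in I_v$ satisfied $1\leq\epsilon$, then $v(\epsilon-1)=0$ by Lemma \ref{valmin} and $\epsilon-1\geq 0$, so $\epsilon-1\in P\cap K_v$ and hence $-1=\phi_v(\epsilon-1)\in P'$ while $1=\phi_v(1)\in P'$; then $1\in P'\cap -P'$, forcing $1=0$ in $Kv$, which is absurd. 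This closes the equivalences.

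The arithmetic in the four-term cycle is routine. The step that takes the most care is the pair $(3)\Rightarrow(5)$ and $(5)\Rightarrow(4)$: one has to identify precisely which failure of the valuation-theoretic conditions obstructs the descent of $\leq$ to $Kv$ --- namely, that $I_v$ contains an element $\geq 1$ --- and check that this is exactly the failure of (P2) for $P'=\phi_v(P\cap K_v)$.
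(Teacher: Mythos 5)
Your proof is correct. Note, however, that the paper does not prove Theorem \ref{compfield} at all: it is quoted as a known result, with the proof delegated to the cited sources (Lam, and Engler--Prestel, Proposition 2.2.4), so there is no in-paper argument to measure yours against. Your cycle $(1)\Rightarrow(2)\Rightarrow(3)\Rightarrow(4)\Rightarrow(1)$ plus the hook $(3)\Rightarrow(5)\Rightarrow(4)$ is essentially the standard field-theoretic argument of the cited references, phrased via the positive cone $P' = \phi_v(P\cap K_v)$, and all steps check out: the division tricks in $(2)\Rightarrow(3)$ and $(4)\Rightarrow(1)$ are legitimate because the relevant denominators are nonzero (e.g.\ in $(2)\Rightarrow(3)$, $v(y)=0$ forces $y\neq 0$ and hence $z>0$ resp.\ $x<0$), and the use of Lemma \ref{valmin} in $(5)\Rightarrow(4)$ to see $v(\epsilon-1)=0$ is exactly right. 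It is worth observing how your argument relates to what the paper actually proves later: its ring-level generalization (Theorem \ref{qoringcomp} together with Lemma \ref{Iv1}) replaces your divisions by $x$, $z$ with multiplication by an element $a$ of opposite value, which is where the Manis (surjectivity) hypothesis enters, and your step $(4)\Rightarrow(1)$ --- which uses $y/x\in I_v$ and inverses in an essential way --- is precisely the implication that fails for general quasi-ordered rings (Examples \ref{exp1}, \ref{exp2}) and is recovered only under the additional locality assumption of Lemma \ref{Iv1}. So your proof buys a clean, self-contained treatment of the field case, at the cost of using field inverses in exactly the places the paper must later work around.
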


\noindent
The fifth condition of the previous result is crucial for the second theorem, the so called Baer-Krull Theorem (see \cite[p.37]{prestel}). Let $K$ again be a field and $v$ a valuation on $K$ with value group $\Gamma_v.$ Note that $\overline{\Gamma_v} = \Gamma_v/2\Gamma_v$ is in a canonical way an $\mathbb{F}_2$-vector space. Hence, we find a subset $\{\pi_i: i \in I\} \subset K,$ such that $\{\overline{v(\pi_i)}: i \in I\}$ is an $\mathbb{F}_2$-basis of $\overline{\Gamma_v}.$

\begin{theorem} \label{baerkrullfield} $\mathrm{(}$Baer-Krull Theorem for ordered fields; $\mathrm{see}$ \cite[Theorem 2.2.5]{prestel}$\mathrm{)}$ \\
Let $K$ be a field and $v$ a valuation on $K.$ Moreover, let $\mathcal{X}(K)$ and $\mathcal{X}(Kv)$ denote the set of all orderings on $K,$ respectively $Kv$. There exists a bijective map
\[
\psi: \{\leq \; \in \mathcal{X}(K)\colon \leq \textrm{ is } v\textrm{-compatible}\} \to \{-1,1\}^I \times \mathcal{X}(Kv),
\]
described as follows: given an ordering $\leq$ in the domain of $\psi$, let $\eta_{\leq}: I \to \{-1,1\},$ where $\eta_{\leq}(i) = 1 \Leftrightarrow 0 \leq \pi_i.$ Then the map $\leq \; \mapsto (\eta_{\leq}, \leq')$ is the above bijection, where $\leq'$ denotes the order on $Kv$ from Theorem \ref{ofieldcomp}(5).
\end{theorem}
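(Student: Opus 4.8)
The plan is to prove that $\psi$ is well defined and bijective by exhibiting its inverse, the only external inputs being Theorem~\ref{ofieldcomp} (mainly the equivalences with (4) and (5)) and Lemma~\ref{valmin}. First I would record the algebraic normal form on which the whole argument rests. Since $K$ is a field, $\mathfrak q_v=\{0\}$, so $v$ restricts to a \emph{surjective} group homomorphism $K^\times\to\Gamma_v$; hence every $x\in K^\times$ can be written as
\[
x\;=\;u\,t^{2}\prod_{i\in J(x)}\pi_i,
\]
where $J(x)\subseteq I$ is the unique finite support of $\overline{v(x)}$ in the $\mathbb F_2$-basis $\{\overline{v(\pi_i)}\colon i\in I\}$ of $\overline{\Gamma_v}$, where $t\in K^\times$, and where $u$ is a unit of $K_v$. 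Here $J(x)$ is intrinsic, and although $t$ and $u$ are not, the residue $\phi_v(u)\in(Kv)^\times$ is determined up to multiplication by a nonzero square, since two such expressions differ by $u/u'=(t'/t)^2$ with $t'/t$ a unit of $K_v$. I would also record the bookkeeping identity $J(xx')=J(x)\,\triangle\,J(x')$ and the fact that the unit part of a product equals, up to a square, the product of the unit parts.

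Next I would establish the \emph{sign formula}: for a $v$-compatible ordering $\leq$ with positive cone $P$ and associated residue order $\leq'$ (which exists by Theorem~\ref{ofieldcomp}(5)),
\[
x\in P\setminus\{0\}\quad\Longleftrightarrow\quad\mathrm{sign}_{\leq'}\!\big(\phi_v(u)\big)\cdot\!\!\prod_{i\in J(x)}\!\!\eta_{\leq}(i)=1.
\]
Its engine is the fact that for a unit $u$ of $K_v$ one has $u\in P\iff\phi_v(u)>'0$; this is precisely the content of ``canonically induced'' in Theorem~\ref{ofieldcomp}(5), and follows from $I_v<1$ (Theorem~\ref{ofieldcomp}(4)) together with $I_v=-I_v$, because any two unit lifts of the same residue differ by a factor $1+a$ with $a\in I_v$, hence with $-1<a<1$, so they have the same sign. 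Combining this with multiplicativity of the sign (immediate from $P\cup-P=K$ and $P\cdot P\subseteq P$) and with the normal form yields the formula, and the formula shows that $\leq$ is recovered from $\psi(\leq)=(\eta_{\leq},\leq')$. Hence $\psi$ is well defined and injective.

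For surjectivity I would run the construction backwards: given $(\eta,\leq')\in\{-1,1\}^I\times\mathcal X(Kv)$, \emph{define} $P\subseteq K$ by declaring $0\in P$ and, for $x\neq0$, $x\in P$ precisely when the right-hand side of the sign formula holds with the given $\eta$ and $\leq'$; this is well defined by the ``up to a square'' remark. I then check (P1)--(P4). Properties (P1), (P2) (with support $\{0\}$, since the unit part of $-x$ is $-u$) and (P3) are routine from the normal form and $J(xx')=J(x)\,\triangle\,J(x')$, and they already imply that membership in $P$ versus $-P$ is multiplicative on $K^\times$. The step I expect to be the main obstacle is (P4), $P+P\subseteq P$: for $x,x'\in P\setminus\{0\}$ with $v(x)\leq v(x')$, write $x+x'=x(1+w)$ with $w=x'/x$, so $v(w)\geq0$; if $v(w)>0$ then $1+w$ is a unit of $K_v$ with residue $1>'0$ by Lemma~\ref{valmin}, while if $v(w)=0$ then $w$ is a unit and $w\in P$ forces $\phi_v(w)>'0$ by the sign formula, whence $\phi_v(1+w)=1+\phi_v(w)>'0$ --- in particular $\phi_v(w)\neq-1$, so the case $v(1+w)>0$ cannot occur. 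In either case $1+w$ is a unit lying in $P$, and since $x\in P$ and the sign is multiplicative, $x+x'=x(1+w)\in P$. Thus $P$ is a positive cone, defining an ordering $\leq$ on $K$; it is $v$-compatible because for $a\in I_v$ the unit $1-a$ has residue $1>'0$, so $1-a\in P$, i.e. $I_v<1$ (Theorem~\ref{ofieldcomp}(4)). Finally, evaluating the sign formula at $x=\pi_i$ (where $J(\pi_i)=\{i\}$ and $u=1$) gives $\eta_{\leq}=\eta$, and on units it returns exactly $\leq'$; hence $\psi(\leq)=(\eta,\leq')$, and $\psi$ is a bijection with the asserted description.
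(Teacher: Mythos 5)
Your argument is correct, and it is worth noting that the paper itself offers no proof of this statement: Theorem \ref{baerkrullfield} is recalled from \cite[Theorem 2.2.5]{prestel}, so there is no in-paper argument to compare against line by line. What you give is essentially the standard field-case proof, organized around the square-class normal form $x=ut^{2}\prod_{i\in J(x)}\pi_i$ and the resulting sign formula; all the delicate points are handled (well-definedness of the sign of $\phi_v(u)$ up to nonzero squares, the fact that unit lifts of the same residue have the same sign via $I_v<1$ and $I_v=-I_v$, and the closure of the constructed cone under addition through the factorization $x+x'=x(1+w)$ together with Lemma \ref{valmin}). It is instructive to compare your construction of the inverse with what the paper does later in its Main Lemma \ref{mainlemma} for the ring-level generalization: there one cannot divide, so instead of extracting a unit part $u$ with $x=ut^2\prod\pi_i$, the paper multiplies both elements to be compared by $\prod_{i}\pi_i a^2$ with $\gamma=\max\{-v(x),-v(y)\}$ to force them into $R_v$ and then compares residues, twisting by $\prod_i\eta^*(i)$ --- the same underlying idea as your sign formula, but in a form that survives the absence of multiplicative inverses and of the local structure of $K_v$. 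Your field-specific route buys a cleaner, genuinely two-sided description (the inverse map is written down explicitly and injectivity is immediate from the sign formula), whereas the paper's normalization is what one must do to get the statement for Manis valuations on rings; in particular your proof would not transfer verbatim to that setting, but as a proof of the quoted classical theorem it is complete.
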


\section{Compatibility between quasi-orders and valuations}

\noindent
The aim of this section is to prove an analogue of Theorem \ref{compfield} for quasi-ordered rings. First we convince ourselves that for this end, we have to restrict our attention to surjective valuations (see Example \ref{nomanis}), also called Manis valuations. Then we establish that the conditions (1) - (3) and (5) from the said theorem are equivalent for quasi-ordered rings, if $v$ is Manis (see Theorem \ref{qoringcomp}). This gives rise to a characterization of all Manis valuations $w$ on $R,$ which are coarser than $v$ (see Theorem \ref{coarser}). Afterwards, we prove that $I_v \prec 1$ is no equivalent condition anymore, no matter of which of the two kinds the quasi-order is (see Examples \ref{exp1} and \ref{exp2}). Furthermore, we show that Theorem \ref{compfield} holds to the full extend, if we additionally demand that $v$ is local (see Lemma \ref{Iv1}). We conclude this section by establishing the notion of rank of a quasi-ordered ring (see Definition \ref{qoringrank}).

\begin{notation} \label{notation} We use the following notation for the rest of this section:
\begin{enumerate}
 \item Let $R$ always denote a commutative ring with $1.$ If a quasi-order $\preceq$ on $R$ (see Definition \ref{qoring}) is induced by some valuation $w$ on $R,$ we also write $\preceq_w$ instead of $\preceq$ and call it a \textbf{proper quasi-order} (p.q.o). Note that a quasi-order $\preceq$ comes from a valuation if and only if $-1 \prec 0.$ The symbol $\leq$ is reserved for orders. \vspace{1mm}

 \item If $v$ is a valuation on $R,$ we write $\mathfrak{q}_v := \textrm{supp}(v) := v^{-1}(\infty)$ for its \textbf{support} and $\Gamma_v$ for its \textbf{value group} (see Definition \ref{ringval}). Moreover, we denote by $R_v := \{x \in R: v(x) \geq 0\}$ the \textbf{valuation ring} of $v$, by $I_v := \{x \in R: v(x) > 0\}$ the \textbf{valuation ideal}, and by 
$U_v := R_v \backslash I_v := \{x \in R: v(x) = 0\}.$ Last but not least, $Rv := R_v/I_v$ denotes the \textbf{residue class domain} of $v$ and $\phi_v: R_v \to Rv, \; x \mapsto x+I_v$ the \textbf{residue map}.
\end{enumerate}
\end{notation}

\begin{definition} \label{ringcomp} Let $(R,\preceq)$ be a quasi-ordered ring. A valuation $v$ on $R$ is said to be \textbf{compatible} with $\preceq$ (or $\preceq$-compatible), if for all $y,z \in R:$
\[
 0 \preceq y \preceq z \Rightarrow v(z) \leq v(y).
\]
\end{definition}

\noindent
In general, we cannot expect that Theorem \ref{compfield} holds even for ordered rings, as the following basic example shows:

\begin{example} \label{nomanis} \hspace{7cm}
\begin{itemize}
 \item[(1)] Consider the map $v: \mathbb{Z}[X] \to \mathbb{Z} \cup \{\infty\}, \; f = \sum_{i \in \N} a_iX^i \mapsto -\deg f.$ It is easy to verify that $v$ is a valuation on $R.$ We can extend the unique order on $\mathbb{Z}$ to $\mathbb{Z}[X]$ by declaring $0 \leq f :\Leftrightarrow 0 \leq f(0).$ Note that $R_v = \mathbb{Z}$ and $I_v = \{0\},$ so obviously the conditions (4) and (5) of Theorem \ref{compfield} are satisfied. However, the inequalities $0 \leq X \leq 0$ yield that neither 
$I_v$ nor $R_v$ is convex with respect to $\leq.$ Moreover, we have $0 \leq X+1 \leq 1,$ but $v(X+1) = -1 < 0 = v(1),$ so (1) is also not satisfied. \vspace{1mm}

 \item[(2)] Let $p$ be a prime number and $v$ the $p$-adic valuation on the integers $\mathbb{Z},$ i.e. if $x = p^ra_1\ldots a_n$ in the unique prime factorization, then $v(x) = r$ (see \cite[(1.3.1)]{prestel}). Moreover, let $\leq$ denote the unique order on $\mathbb{Z}.$ Then $R_v = \mathbb{Z}$ is convex, so (2) holds. However, it is easy to see that all the other conditions of Theorem \ref{compfield} are not satisfied. 
\end{itemize}
\end{example}

\noindent
As a matter of fact, for a quasi-ordered ring $(R,\preceq)$ and a valuation $v$ on $R,$ in general none of the conditions from Theorem \ref{compfield} is equivalent to another. We do get the following tabular of implications, where \vspace{1mm}

\begin{enumerate}
\item $v$ is compatible with $\preceq,$ \vspace{1mm}

\item $R_v$ is convex, \vspace{1mm}

\item $I_v$ is convex, \vspace{1mm}

\item $I_v \prec 1,$ \vspace{1mm}

\item $\preceq$ induces canonically via the residue map $\phi_v: R_v \to Rv:= R_v/I_v, \: x \mapsto x + I_v$ a quasi-order $\preceq'$ on the residue domain $Rv.$
\end{enumerate}  \vspace{4mm}

\begin{center}
$\begin{array} {c|ccccc}
\Rightarrow & (1) & (2) & (3) & (4) & (5) \\  [1ex]

\hline \\ (1) & & \surd  & \surd & \surd & \surd \\  [1ex]

(2) & & & & & \\  [1ex]

(3) & & & & \surd & \surd \\  [1ex]

(4) & & & & & \\  [1ex]

(5) & & & & \surd &
\end{array}$
\end{center} \vspace{1mm}

\begin{proof} We first show that (1) implies (2) and (3). So let $0 \preceq y \preceq z$ with $z \in R_v.$ Then (1) yields $v(z) \leq v(y).$ Therefore $v(y) \in R_v.$ The same arguments for $I_v$ instead of $R_v.$ In order to show that (1) implies (4) and (5), it suffices to show that (2) implies the two of them. \vspace{1mm}

\noindent
Condition (2) obviously implies (4). It also implies (5) as we will not use the Manis property in the proof of the respective implication in Theorem \ref{qoringcomp}. \vspace{1mm}

\noindent
In Example \ref{nomanis}(ii), we have already seen that (2) implies none of the other conditions. We now show that it does not imply (2), and hece also not (1). Let $v$ be the trivial valuation on $\mathbb{Z}.$ Extend to $\mathbb{Z}[X]$ with $\gamma = -1.$ Let $w$ be the $p$-adic valuation on $\mathbb{Z}.$ Extend to $\mathbb{Z}[X]$ with $\gamma = 1.$ Then (3) holds, because
\[
0 < v(x) \Rightarrow x = 0 \Rightarrow y = 0 \Rightarrow 0 < v(y),
\]
where the first implication follows by definition of $v$ and the second one by (3). \\
However, (2) does not hold. Choose $a = 1$ and $b = X.$ Then $w(a) = 0 \leq w(b) = 1$ and $0 \leq v(a) = 1,$ but $v(b) = -1 < 0.$ \vspace{1mm}

\noindent
Condition (4) implies nothing even if $v$ is Manis, as we will see in Example \ref{exp1} and \ref{exp2}. (5) implies (4): Assume $1 \preceq x$ for some $x \in I_v.$ Then $\overline{1} \preceq \overline{x} = 0,$ but in quasi-ordered rings it holds $0 \prec 1,$ a contradiction. Therefore $I_v \prec 1.$ The fact that (5) does not imply any of the other conditions follows from Example \ref{nomanis}(i).
\end{proof}

\vspace{1mm}

\noindent
Such counterexamples can be prevented by demanding surjectivity of the valuation $v.$ Recall that valuations on fields are automatically surjective. Contrary, in the ring case, $v(R \backslash \mathfrak{q}_v)$ is not necessarily closed under additive inverses, as $R$ is not necessarily closed under multiplicative inverses (see Facts \ref{fact}). Surjectivity will be frequently exploited later on, as it mitigates the lack of multiplicative inverses.

\begin{definition} (see \cite[p.193]{manis}) Let $v$ be a valuation on $R.$ Then $v$ is said to be a \textbf{Manis valuation}, if $v$ is surjective.
\end{definition}

\noindent
We now turn towards the proof of Theorem \ref{compfield} for quasi-ordered rings. This requires some preliminaries.

\begin{definition} Let $(R,\preceq)$ be a quasi-ordered ring and $S \subseteq R$ a subset of $R.$ Then $S$ is said to be \textbf{convex}, if $x \preceq y \preceq z$ and $x,z \in S$ implies $y \in S.$
\end{definition}

\noindent
The following lemma simplifies convexity in a usual manner and holds particularly for the valuation ring $R_v$ and its prime ideal $I_v,$ as $v(x) = v(-x)$ for all $x \in R.$

\begin{lemma} \label{...} Let $(R,\preceq)$ be a quasi-ordered ring. A subset $S \subseteq R$ with $0 \in S$ and $S = -S$ is convex, if and only if $0 \preceq y \preceq z$ and $z \in S$ implies $y \in S$.
\begin{proof} The implication $\Rightarrow$ is trivial. So suppose that the right hand side holds and let $x \preceq y \preceq z$ with $x,z \in S.$ If $0 \preceq y,$ it follows immediately by assumption that $y \in S.$ So suppose that $y \prec 0.$ Then $x \preceq y \prec 0.$ We will show $0 \prec -y \preceq -x.$ Note that 
$-x \in S$ because $S = -S.$ Hence, we obtain $-y \in S,$ but then also $y \in S.$ \vspace{1mm}

\noindent
Clearly $0 \prec -x,-y$ by axiom (QR4) and the fact that $E_0$ is an ideal (see Remark \ref{rema}(2)). It remains to show that $-y \preceq -x.$ Assume for a contradiction $-x \prec -y.$ Note that $y \prec 0 \prec -x,-y,$ therefore $-x \not\sim y$ and $y \not\sim -y.$ Via (QR4), it follows from $x \preceq y$ that $0 \preceq y-x$ and from $-x \preceq -y$ that $y-x \preceq  0.$ Thus, $y-x \in E_0.$ This implies $-y \sim -x$ (see Remark \ref{rema}(2)), a contradiction.
\end{proof}
\end{lemma}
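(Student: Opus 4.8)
*The plan is to prove the non-trivial direction by reducing the general chain $x\preceq y\preceq z$ with $x,z\in S$ to the hypothesized situation $0\preceq y'\preceq z'$ with $z'\in S$.* The statement to prove is that, for a subset $S\subseteq R$ with $0\in S$ and $S=-S$, the condition ``$0\preceq y\preceq z,\ z\in S\ \Rightarrow\ y\in S$'' already forces full convexity. The forward implication is immediate, so assume the right-hand condition and take $x\preceq y\preceq z$ with $x,z\in S$. If $0\preceq y$ then transitivity gives $0\preceq y\preceq z$ and the hypothesis applies directly, yielding $y\in S$. The only remaining case is $y\prec 0$, and there I would exploit the symmetry $S=-S$: it suffices to show $0\prec -y\preceq -x$, because then the hypothesis (applied with $z'=-x\in S$) gives $-y\in S$, hence $y\in S$.

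So the real content is: from $x\preceq y\prec 0$ deduce $0\prec -y\preceq -x$. First, since $E_0$ is a prime ideal (Remark \ref{rema}(2)) and hence a subgroup under addition, $x,y\notin E_0$ (they are $\prec 0$, so not $\sim 0$), whence $-x,-y\notin E_0$; combined with (QR4) applied to $y\prec 0$ and $x\prec 0$ this gives $0\prec -y$ and $0\prec -x$. It remains to establish $-y\preceq -x$. I would argue by contradiction, assuming $-x\prec -y$. Observe $y\prec 0\prec -x$ and $y\prec 0\prec -y$, so $y\nsim -x$ and $y\nsim -y$; these non-equivalences are exactly what is needed to apply (QR4) with the additive perturbation by $-x$ and by $-y$. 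Adding $-x$ to $x\preceq y$ via (QR4) gives $0\preceq y-x$; adding $-y$ to the assumed $-x\prec -y$ via (QR4) gives $y-x\preceq 0$. Hence $y-x\in E_0$, i.e.\ $y-x\sim 0$, so $x-y\sim 0$ as well (Lemma \ref{valmin} / the additive-group structure of $E_0$), and therefore $-y\sim -x$ by Remark \ref{rema}(2) (adding $-x-y\in E_0$ to $x\sim y$... more precisely, $x-(-y)=x+y$ and $-x-(-y)=y-x\in E_0$ force $x+y\sim$, unwinding to $-x\sim -y$). This contradicts $-x\prec -y$, completing the case and the proof.

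The main obstacle is the bookkeeping around which pairs are $\sim$-inequivalent, since (QR4) only applies when the perturbing element is not $\sim$ to the larger side. The $S=-S$ trick is what makes the $y\prec 0$ case tractable at all, and the non-equivalences needed for (QR4) all follow cleanly from the observation that everything in sight lies strictly between $y$ (which is $\prec 0$) and $-x$ (which is $\succ 0$), so nothing in the chain can be $\sim 0$ unless it has already been shown to lie in $E_0$. One point to watch: in deducing $0\prec -x,-y$ one must invoke that $E_0$ is an ideal so that $x\in E_0$ iff $-x\in E_0$; this is precisely Remark \ref{rema}(2), and without it the argument would stall.
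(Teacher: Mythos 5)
Your argument is correct and follows essentially the same route as the paper: reduce to the case $y\prec 0$ via $S=-S$, show $0\prec -x,-y$ using (QR4) and the ideal property of $E_0$, and derive a contradiction from $-x\prec -y$ by producing $y-x\in E_0$ with exactly the same non-equivalences $-x\nsim y$ and $y\nsim -y$. Only two cosmetic slips: the second application of (QR4) perturbs $-x\preceq -y$ by $y$ (not $-y$), which is what your hypothesis $y\nsim -y$ licenses, and the final step $-y\sim -x$ is really Remark \ref{rema}(1) applied to $-x=(y-x)+(-y)$ rather than Lemma \ref{valmin}.
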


\noindent
The most difficult part of the proof will be to show that if $v$ is a $\preceq$-compatible valuation on $R,$  then $\preceq$ induces a quasi-order on the residue class domain $Rv.$ For that purpose we want to exploit convexity of $I_v.$

\begin{lemma} \label{convexity} Let $(R,\preceq)$ be a quasi-ordered ring, $v$ a valuation on $R$ such that $I_v$ is convex, and $u \in U_v.$ 
\begin{enumerate}
 \item If $c \in I_v,$ then $c \not\sim u.$ \vspace{1mm}

 \item If $0 \prec u,$ then $0 \prec u +c$ for all $c \in I_v.$ \vspace{1mm}

 \item If $u \prec 0,$ then $u+c \prec 0$ for all $c \in I_v.$
\end{enumerate}
\begin{proof} \hspace{7cm} 
\begin{enumerate}
 \item Assume $c \sim u.$ Then $c \preceq u \preceq c$ and convexity of $I_v$ yields $u \in I_v,$ a contradiction. \vspace{1mm}

 \item Assume that $0 \prec u$ but $0 \nprec u+c$ for some $c \in I_v.$ Then $0 \prec u$ and $u+c \preceq 0.$ Note that this implies $c \notin E_0,$ as otherwise $u \sim u+c$ (see Remark \ref{rema}(1)). Hence, we obtain $u \preceq -c.$ So it holds $0 \prec u \preceq -c.$ Convexity of $I_v$ yields $u \in I_v,$ a contradiction. 
\vspace{1mm}

 \item Assume $0 \preceq u+c$ for some $c \in I_v.$ Then $u \prec 0 \preceq u+c.$ It remains to show that $-u \not\sim u+c.$ Then $0 \prec -u \preceq c$ and one may conclude by convexity of $I_v.$ So assume for a contradiction that $-u \preceq u+c.$ From Lemma \ref{valmin} follows $u+c \in U_v,$ so (1) yields that $-c \not\sim u+c.$ Thus, one obtains $-u-c \preceq u.$ Now note that $0 \prec -u \in U_v.$ So (2) yields $0 \prec -u-c.$ Therefore $0 \prec -u-c \preceq u \prec 0,$ a contradiction. This finishes the proof.
\end{enumerate}
\end{proof}
\end{lemma}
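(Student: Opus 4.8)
The plan is to prove all three items by reduction to the convexity hypothesis on $I_v$, using Lemma~\ref{valmin} (which says $v$ behaves additively on sums of unequal value) and Remark~\ref{rema}(1) (which says adding an element of $E_0$ does not change the $\sim$-class) as the two workhorses, together with the reformulation of convexity from Lemma~\ref{...}. For (1), the argument is immediate: if $c\sim u$ with $c\in I_v$, then $c\preceq u\preceq c$ and convexity forces $u\in I_v$, contradicting $u\in U_v$.

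For (2) and (3), the common strategy is: assume the conclusion fails, rearrange the assumed inequality so that one side becomes $0$ and the other side becomes an element sandwiched (up to sign) between $0$ and something in $I_v$, and then invoke convexity of $I_v$ to derive a contradiction. For (2), supposing $0\prec u$ but $u+c\preceq 0$, I first note $c\notin E_0$: otherwise $u\sim u+c$ by Remark~\ref{rema}(1), contradicting $0\prec u\sim u+c\preceq 0$. Knowing $c\notin E_0$ (hence $c\nsim 0$, and in fact $-c\nsim 0$ as well since $E_0$ is an ideal), I can apply (QR4) to $u+c\preceq 0$ to get $u\preceq -c$, so $0\prec u\preceq -c$ with $-c\in I_v$; convexity then gives $u\in I_v$, a contradiction. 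One must be a little careful that the hypothesis $z\nsim y$ needed for (QR4) is satisfied — here we are adding $-c$ to both sides of $u+c\preceq 0$, and we need $-c\nsim 0$, which holds since $c\notin E_0$ and $E_0=-E_0$.

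Item (3) is the most delicate and I expect it to be the main obstacle, because here $u\prec 0$ means $-u\in U_v$ with $0\prec -u$, and we want to transfer the assumed inequality $0\preceq u+c$ into the convexity window. The plan: assume $0\preceq u+c$; then $u\prec 0\preceq u+c$, and it suffices to show $-u\preceq c$, since then $0\prec -u\preceq c\in I_v$ gives $-u\in I_v$ by convexity, a contradiction. Rewriting, $-u\preceq c$ is equivalent (adding $u+c$, provided the relevant $\nsim$ conditions hold) to $-u-c\preceq u+c$... more directly, I will argue: from $0\preceq u+c$ I want $-u\preceq c$, equivalently $0\preceq c+u$ after adding... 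Instead, the cleaner route is the one foreshadowed in the preceding lemma's proofs: show $-u\nsim u+c$ and then from $0\preceq u+c$ deduce $-(u+c)\preceq 0\preceq u+c$ is not quite it either. The workable version: suppose for contradiction $-u\preceq u+c$ is false is backwards; rather, from $u\prec 0\preceq u+c$ we get $-u-c\preceq 0\preceq -u$ by negating, wait — let me state the intended chain. Assume $0\preceq u+c$. By Lemma~\ref{valmin}, $v(u+c)=\min\{v(u),v(c)\}=0$, so $u+c\in U_v$, and by part (1) $-c\nsim u+c$. Now I claim $-u\preceq u+c$ cannot be used directly; instead observe $u\prec 0$ and $0\preceq u+c$ give, via (QR4) applied to add $-c$, that $u+(-c)\preceq$... the key inequality I actually establish is $-u\preceq u+c$: indeed suppose not, i.e.\ $u+c\prec -u$; then since $-c\nsim u+c$, adding $-c$ to $u+c\preceq -u$ would need $-c\nsim -u$, i.e.\ $c\nsim u$, which is part (1); this yields $u\preceq -u-c$, and since $0\prec -u\in U_v$ part (2) gives $0\prec -u-c$, so $0\prec -u-c\preceq u\prec 0$, a contradiction. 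Hence $-u\preceq u+c$, and then from $0\preceq u+c$ combined with $u\prec 0$: we have $-u\preceq u+c$, and we want $-u\preceq c$; add $-u$ to both sides of $-u\preceq u+c$? That requires $u+c\nsim -u$, which we just... This bookkeeping of the $\nsim$ side-conditions for each application of (QR4) — always checking via part (1) or the ideal property of $E_0$ that the element being added is not $\sim$-equivalent to the larger side — is exactly where the care is needed, and it is the step I expect to consume most of the work.
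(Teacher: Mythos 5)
Your items (1) and (2) are correct and follow the paper's proof exactly (including the needed side condition $-c \nsim 0$ for (QR4), which the paper leaves implicit). Item (3), however, has a genuine gap, and it sits precisely at the step you flag as delicate. You set out to establish the inequality $-u \preceq u+c$ by assuming $u+c \prec -u$ and deriving a contradiction; but from $u+c \preceq -u$, adding $-c$ via (QR4) gives $u \preceq -u-c$, and together with $0 \prec -u-c$ (from part (2)) and $u \prec 0$ this is perfectly consistent --- your claimed chain $0 \prec -u-c \preceq u \prec 0$ reverses the inequality you actually derived, so no contradiction is obtained. Moreover, even if you had proved $-u \preceq u+c$, it would not suffice: the final application of (QR4), passing from $0 \preceq u+c$ to $-u \preceq c$, requires the side condition $-u \nsim u+c$, and $-u \preceq u+c$ is compatible with $-u \sim u+c$; you notice this yourself at the end and leave it unresolved.

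The repair is to aim for $-u \nsim u+c$ rather than $-u \preceq u+c$, i.e. to run the contradiction in the opposite direction, which is what the paper does: assume $-u \preceq u+c$. Since $v(u)=0 \neq v(c)$, Lemma \ref{valmin} gives $u+c \in U_v$, so part (1) yields $-c \nsim u+c$, and (QR4) applied to $-u \preceq u+c$ with $z=-c$ gives $-u-c \preceq u$. Part (2) (applied to $0 \prec -u \in U_v$ and $-c \in I_v$) gives $0 \prec -u-c$, so $0 \prec -u-c \preceq u \prec 0$, a genuine contradiction. Hence $-u \npreceq u+c$, in particular $-u \nsim u+c$, and now (QR4) applied to $0 \preceq u+c$ with $z=-u$ gives $-u \preceq c$; then $0 \prec -u \preceq c \in I_v$ and convexity of $I_v$ force $-u \in I_v$, contradicting $u \in U_v$.
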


\noindent
Moreover, we require a couple of results that Fakhruddin established in the more specific setting of quasi-ordered fields (see \cite{fakh}).

\begin{lemma} \label{lemmaxxx} Let $(R,\preceq)$ be a quasi-ordered ring and $x \in R.$ Then $x \sim -x$ if and only if $0 \preceq x,-x.$
\begin{proof} Just as in the case of quasi-ordered fields, see \cite[Lemma 3.1]{fakh}.
\end{proof}
\end{lemma}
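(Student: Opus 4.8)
The statement to prove is Lemma \ref{lemmaxxx}: for $x \in R$, we have $x \sim -x$ if and only if $0 \preceq x$ and $0 \preceq -x$. The plan is to prove the two directions separately, mimicking the field-case argument of Fakhruddin but taking care that the ring axioms (QR1)--(QR5), together with Remark \ref{rema}, suffice.

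For the direction ($\Leftarrow$): assume $0 \preceq x$ and $0 \preceq -x$. First I would dispose of the degenerate case $x \sim 0$: if $x \in E_0$, then $-x \in E_0$ as well (since $E_0$ is an ideal, Remark \ref{rema}(2)), hence $x \sim 0 \sim -x$ and we are done. So assume $x \nsim 0$, equivalently $0 \prec x$ and $0 \prec -x$ by hypothesis. From $0 \preceq x$ together with $z = -x \nsim x$ (which holds since $0 \prec x$ and $0 \prec -x$ force $x \nsim -x$ — otherwise transitivity would give $0 \prec x \sim -x$ and also $x \preceq 0$, contradicting $0 \prec x$), axiom (QR4) applied to $0 \preceq x$ with the shift by $-x$ gives $-x \preceq 0$. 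But we also assumed $0 \preceq -x$, so $-x \sim 0$, contradicting $x \nsim 0$. Hence in fact the only possibility is $x \sim 0$, and we have already handled that. (In other words, $0 \preceq x, -x$ actually forces $x \in E_0$, and then trivially $x \sim -x$.)

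For the direction ($\Rightarrow$): assume $x \sim -x$. Again handle $x \sim 0$ first: then $x \sim -x \sim 0$ gives $0 \preceq x$ and $0 \preceq -x$ trivially. So assume $x \nsim 0$. Suppose for contradiction that $x \npreceq 0$ is false, i.e. without loss of generality (using totality and the symmetry $x \leftrightarrow -x$ afforded by $x \sim -x$) assume $x \prec 0$; then since $x \sim -x$ we also get $-x \prec 0$, so $x \preceq 0$ and $-x \preceq 0$. Now I would add: from $x \preceq 0$ with the shift by $-x$, noting $0 \nsim -x$ because $-x \prec 0$ means $-x \nsim 0$, axiom (QR4) gives $0 \preceq -x$; combined with $-x \preceq 0$ this yields $-x \sim 0$, contradicting $x \nsim 0$. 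The symmetric argument starting from $0 \prec x$ (hence $0 \prec -x$) and shifting by $-x$ gives $x \preceq 0$, again a contradiction with $0 \prec x$. Either way we reach a contradiction, so it cannot be that $x$ and $-x$ both lie strictly on one side of $0$; combined with $x \sim -x$, the only remaining case consistent with totality is $0 \preceq x$ and $0 \preceq -x$.

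The main obstacle I anticipate is the careful bookkeeping of the side conditions ``$z \nsim y$'' in axiom (QR4): one must repeatedly check that the element by which we translate is not $\sim$-equivalent to the larger element, and this requires knowing that $x \nsim -x$ in the strict cases, which in turn follows from $0 \prec x$ together with transitivity of $\sim$ and $\prec$. A cleaner alternative that I would try if the case analysis becomes unwieldy is to quote Lemma \ref{QR5} and Remark \ref{rema}(1) to reduce everything to the support, exactly as in the proof of Lemma \ref{QR5} above, where the combination ``(QR4) if $x \nsim 0$, Remark \ref{rema}(2) if $x \sim 0$'' is used to extract an inequality $y \preceq x$ from $y - x \in E_0$. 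Given that the paper explicitly says the argument is ``just as in the case of quasi-ordered fields,'' I expect the field proof of \cite[Lemma 3.1]{fakh} transfers essentially verbatim once these support-cases are inserted at each application of (QR4).
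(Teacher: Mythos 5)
Your ($\Rightarrow$) direction is essentially correct: assuming $x \sim -x$ and, for contradiction, $x \prec 0$, you get $-x \prec 0$ as well, and since $-x \nsim 0$ the application of (QR4) to $x \preceq 0$ with shift $-x$ gives $0 \preceq -x$, hence $-x \sim 0$, contradicting $x \nsim 0$. The ($\Leftarrow$) direction, however, contains a genuine error. You assert that $0 \prec x$ and $0 \prec -x$ force $x \nsim -x$, justified by ``otherwise transitivity would give \dots $x \preceq 0$''; no such inequality follows from $0 \prec x \sim -x$, and the asserted claim is false. Indeed, in any proper quasi-ordered ring $(R,\preceq_w)$ one has $0 \preceq y$ and $0 \preceq -y$ for \emph{every} $y$ (since $w(y) = w(-y) \leq \infty = w(0)$), while $y \sim -y$ also holds for every $y$ and the support $E_0 = \mathfrak{q}_w$ is a proper ideal. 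In particular your announced conclusion that $0 \preceq x,-x$ forces $x \in E_0$ is false (take $x=1$ in $\mathbb{Q}$ with the quasi-order induced by the $p$-adic valuation: $0 \prec 1$, $0 \prec -1$, yet $1 \notin E_0$), and it would contradict exactly the phenomenon the lemma is meant to capture, namely that in the valuation-induced case all elements satisfy both inequalities and are equivalent to their negatives without lying in the support.

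The repair is small and uses the very computation you wrote, but with the side condition of (QR4) in its correct logical role: assume $0 \preceq x$ and $0 \preceq -x$, and suppose \emph{for contradiction} that $x \nsim -x$. This supposition is precisely the hypothesis ``$z \nsim y$'' needed to apply (QR4) to $0 \preceq x$ with $z = -x$, yielding $-x \preceq 0$; together with $0 \preceq -x$ this gives $-x \sim 0$, hence $x \sim 0$ because $E_0$ is an ideal (Remark \ref{rema}(2)), so $x \sim 0 \sim -x$, contradicting the supposition. No case split on $x \sim 0$ is then needed. With this change the argument is a faithful transfer of the field proof of \cite[Lemma 3.1]{fakh}, which is all the paper itself invokes.
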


\begin{lemma} Let $(R,\preceq)$ be a quasi-ordered ring and $x,y \in R.$ If $x \sim y,$ then $x \sim -y$ or $0 \sim x-y$ .
\begin{proof}  If $x,y \sim 0,$ then $x \sim -y,$ as $E_0$ is an ideal. So suppose that $x,y \nsim 0,$ and assume that $x \sim -y.$ We show $0 \sim x-y.$ Note that $y \preceq x \nsim -y.$ Therefore $0 \preceq x-y.$ Moreover, $x \preceq y \sim x \nsim -y,$ so $y \nsim -y,$ and therewith $x-y \preceq 0.$ Thus, $0 \sim x-y.$ 
\end{proof}
\end{lemma}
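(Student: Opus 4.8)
The plan is to argue by cases according to whether $x$ (equivalently $y$, since $x \sim y$) lies in the support $E_0$; since $x \sim y$ forces $x \in E_0 \Leftrightarrow y \in E_0$, the two cases are exhaustive. If $x, y \in E_0$, then because $E_0$ is an ideal (Remark \ref{rema}(2)) we also have $-y \in E_0$, and since all elements of $E_0$ lie in a single $\sim$-class this gives $x \sim -y$; so the first disjunct holds and we are done in this case.

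So assume $x, y \nsim 0$ and, aiming at the second disjunct, suppose $x \nsim -y$; the goal is then to show $0 \sim x-y$. First I would record the auxiliary fact $y \nsim -y$: indeed, if $y \sim -y$ then transitivity of $\sim$ would give $x \sim y \sim -y$, contradicting $x \nsim -y$ (alternatively one may invoke Lemma \ref{lemmaxxx}). Now apply axiom (QR4) twice. From $y \preceq x$ together with $-y \nsim x$ (which is just $x \nsim -y$) we obtain $y + (-y) \preceq x + (-y)$, that is, $0 \preceq x-y$. From $x \preceq y$ together with $-y \nsim y$ we obtain $x + (-y) \preceq y + (-y)$, that is, $x-y \preceq 0$. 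Combining the two inequalities yields $x-y \sim 0$, i.e. $0 \sim x-y$, as desired.

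There is no real obstacle here beyond carefully matching the hypotheses of (QR4): the one subtle point is that in each of the two applications the element being added, namely $-y$, must be $\nsim$ the larger of the two sides, which is precisely why the preliminary observation $y \nsim -y$ is needed (the other side condition, $-y \nsim x$, is our standing assumption). Everything else is immediate from reflexivity and transitivity of $\sim$ and from the support being an ideal.
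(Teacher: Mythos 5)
Your proof is correct and follows essentially the same route as the paper: dispose of the case $x,y\in E_0$ via the ideal property of the support, then in the case $x,y\nsim 0$ with $x\nsim -y$ apply (QR4) twice (adding $-y$ to $y\preceq x$ using $-y\nsim x$, and to $x\preceq y$ using $y\nsim -y$, the latter obtained by transitivity) to get $0\preceq x-y\preceq 0$. Your explicit remark about matching the side condition of (QR4) is exactly the point the paper's proof uses implicitly.
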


\begin{corollary} Let $(R,\preceq)$ be a quasi-ordered ring. Then $\sim$ is preserved under multiplication, i.e. if $x,y,a \in R$ such that $x \sim y,$ then $ax \sim ay.$ \label{corpres}
\begin{proof} The cases $0 \preceq a$ (axiom (QR3)) and $x,y$ in $E_0$ ($E_0$ is an ideal) are both trivial. So suppose that $0 \npreceq a$ and $x,y \nsim 0.$ Then $0 \preceq -a.$ The previous lemma gives rise to a case distinction. First suppose $0 \sim x-y.$ Since $-x\nsim 0$ it holds $-x \nsim x-y.$ Hence, $0 \sim x-y$ yields $-x \sim -y.$ Since $0 \preceq -a,$ axiom (QR3) yields $ax \sim ay.$ Now suppose that $0 \nsim x-y.$ Then also $0 \nsim y-x.$ The previous lemma implies $x \sim -y$ and $y \sim -x.$ Therefore $-y \sim x \sim y \sim -x.$ Since $0 \preceq -a,$ we obtain $ay=(-a)(-y) \sim (-a)(-x) = ax.$
\end{proof}
\end{corollary}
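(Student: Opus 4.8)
The plan is to dispose of two easy cases and then reduce the genuinely new situation to an application of axiom (QR3). First, if $0 \preceq a$, then (QR3) applied to $x \preceq y$ and to $y \preceq x$ gives $ax \preceq ay$ and $ay \preceq ax$, so $ax \sim ay$. Secondly, if $x \sim 0$ (equivalently $y \sim 0$, since $x \sim y$), then $x, y \in E_0$, and as $E_0$ is a prime ideal (Remark \ref{rema}(2)) we get $ax, ay \in E_0$, i.e. $ax \sim 0 \sim ay$. So from now on I would assume $a \prec 0$ and $x, y \nsim 0$; since $E_0$ is closed under additive inverses, (QR4) turns $a \prec 0$ into $0 \prec -a$.

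The key point is that it now suffices to prove $-x \sim -y$: granting this, (QR3) applied to the nonnegative element $-a$ yields $(-a)(-x) \sim (-a)(-y)$, which is exactly $ax \sim ay$. To prove $-x \sim -y$ (under the standing hypotheses $x \sim y$ and $x, y \nsim 0$), I would invoke the previous lemma, which gives either $x \sim -y$ or $0 \sim x - y$, and treat the two cases. If $0 \sim x - y$, then since $-x \nsim 0$, Remark \ref{rema}(1) applied to $x - y$ and $-x$ gives $-y = (x - y) + (-x) \sim -x$. If $0 \nsim x - y$, then also $0 \nsim y - x$ (again because $E_0$ is closed under additive inverses), so the previous lemma, applied once to $x \sim y$ and once to $y \sim x$, forces $x \sim -y$ and $y \sim -x$; chaining these yields $-y \sim x \sim y \sim -x$, and in particular $-x \sim -y$.

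The main obstacle is precisely the negative-$a$ case: the order axioms only govern multiplication by nonnegative elements, so one must pass to the positive element $-a$, which in turn forces one to know that $\sim$ is stable under negation. That stability is not among the axioms and has to be squeezed out of the structural lemmas relating $\sim$ to $+$ and $-$ (the dichotomy $x \sim -y$ versus $0 \sim x - y$); once it is available, the conclusion is a one-line application of (QR3).
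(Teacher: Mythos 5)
Your proof is correct and follows essentially the same route as the paper: the same two trivial cases, the reduction to showing $-x \sim -y$ so that (QR3) can be applied with the nonnegative element $-a$, and the same case distinction ($0 \sim x-y$ handled via Remark \ref{rema}(1), versus $0 \nsim x-y$ handled by applying the previous lemma to both $x \sim y$ and $y \sim x$ and chaining). The only cosmetic difference is that you spell out why $0 \prec -a$ and invoke primality of $E_0$ where the ideal property suffices; neither affects the argument.
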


\begin{lemma} Let $(R,\preceq)$ be a quasi-ordered ring such that $0 \prec -1.$ Then it holds $x+y \preceq \max\{x,y\}$ for all $x,y \in R.$ \label{lemmaineq}
\end{lemma}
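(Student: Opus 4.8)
The plan is to reduce the inequality $x+y \preceq \max\{x,y\}$ to the valuation-theoretic statement that underlies it. Since $0 \prec -1$, the quasi-order $\preceq$ is a proper quasi-order, coming from a valuation $v$ on $R$ via $x \preceq y \Leftrightarrow v(y) \leq v(x)$ (this is the dichotomy from \cite[Theorem 4.6]{sim}, quoted in the paragraph after Definition \ref{qoring}; the hypothesis $0 \prec -1$ is exactly the condition identified in Notation \ref{notation}(1) for $\preceq$ to be induced by a valuation). Under this translation, $x + y \preceq \max\{x,y\}$ means precisely $v(\max\{x,y\}) \leq v(x+y)$, where, say, $\max\{x,y\} = y$ when $v(y) \leq v(x)$; so the claim becomes $\min\{v(x),v(y)\} \leq v(x+y)$, which is nothing but axiom (V4) of Definition \ref{ringval}.

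First I would make this reduction explicit: invoke the dichotomy to fix a valuation $v$ with $x \preceq y \Leftrightarrow v(y) \le v(x)$. Then I would observe that without loss of generality $x \preceq y$, i.e. $v(y) \le v(x)$, so that $\max\{x,y\} = y$ (more precisely $y \sim \max\{x,y\}$, but either representative of the $\sim$-class works since $\preceq$ only depends on $v$-values). The goal $x + y \preceq y$ then unwinds to $v(y) \le v(x+y)$. By (V4) we have $v(x+y) \ge \min\{v(x),v(y)\} = v(y)$, which is exactly what is needed. The case $y \preceq x$ is symmetric, giving $x+y \preceq x = \max\{x,y\}$.

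Alternatively, if one prefers to avoid appealing to the dichotomy, the statement can be proven directly from the axioms. Assuming $x \preceq y$, one wants $x + y \preceq y$. If $x \nsim y$ this is immediate from (QR4) applied with $z = y$: from $x \preceq y$ and $y \nsim y$... no — (QR4) needs $z \nsim y$, so take $z = x$? That gives $x + x \preceq y + x$, not quite. The cleaner route when $x \nsim y$ is: $x \prec y$, and we must show $x + y \preceq y + 0$; rewrite as needing $x + y \preceq y$, and since $x \nsim y$ we may... the bookkeeping here is exactly the kind of case analysis (using (QR4), Remark \ref{rema}, and Lemma \ref{valmin}-type arguments on when $x+y \sim y$) that the authors have been doing in the preceding lemmas, so it is routine but slightly fiddly. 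The case $x \sim y$ reduces, via Corollary \ref{corpres} and the translation, to handling whether $v(x+y) \ge v(x)$ — which again is (V4) — or can be argued from Remark \ref{rema}(1) together with the structure of $E_0$.

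The main obstacle, if one goes the axiomatic route, is the $x \sim y$ case and, within the $x \nsim y$ case, correctly tracking when $x + y$ falls into the same $\sim$-class as $y$ so that (QR4) can legitimately be applied. Both of these disappear entirely once one passes to the valuation picture via the dichotomy, where the whole lemma collapses to (V4); so the recommended approach is the first one, and I expect the author's proof to be a one-line appeal to \cite[Theorem 4.6]{sim} plus (V4).
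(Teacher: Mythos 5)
Your main argument is correct, but it is a genuinely different route from the paper's. You reduce the statement to the dichotomy of \cite[Theorem 4.6]{sim}: since $0 \prec -1$ excludes the ordered case (an order satisfies $-1 < 0$ by (O1) and (O4)), the quasi-order is induced by a valuation $v$ with $x \preceq y \Leftrightarrow v(y) \leq v(x)$, and then $x+y \preceq \max\{x,y\}$ is exactly the ultrametric inequality (V4); this translation is airtight, and there is no circularity since the dichotomy is an external result. The paper, however, deliberately avoids the dichotomy throughout (it says so explicitly before Corollary \ref{BaerKrullII}) and instead gives a short self-contained axiomatic proof following Fakhruddin: assuming $x \preceq y$ and, for contradiction, $y \prec x+y$, it uses Lemma \ref{lemmaxxx} and Corollary \ref{corpres} to get $r \sim -r$ for all $r$, notes $-y \nsim x+y$, and applies (QR4) to obtain $x+y \sim -x-y \preceq x \preceq y$, a contradiction. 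So your prediction that the paper's proof is a one-line appeal to the dichotomy is not borne out, and your sketched ``direct axiomatic'' alternative is left incomplete (the $x \sim y$ bookkeeping you wave at is essentially what the paper's argument resolves via $r \sim -r$ and (QR4)); but your primary proof stands. What each approach buys: yours is shorter and conceptually transparent once the valuation is in hand; the paper's keeps the treatment uniform and axiom-internal, which matters for its stated goal of handling orders and proper quasi-orders simultaneously without invoking the classification. One cosmetic caveat: the remark in Notation \ref{notation}(1) that you lean on is misprinted in the paper (the valuation case corresponds to $0 \prec -1$, not $-1 \prec 0$), so if you cite it you should cite the corrected form or argue directly from (O1), (O4) as above.
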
 
\begin{proof}
Basically as in the field case, see \cite[Lemma 4.1]{fakh}. Suppose that $x \preceq y,$ and assume for a contradiction that $y \prec x+y.$ Note that $0 \preceq 1$ by axiom (QR1). Lemma \ref{lemmaxxx} implies $-1 \sim 1,$ so the previous corollary yields $-r \sim r$ for all $r \in R.$ It follows $-x \sim x \preceq y \prec x+y.$ Particularly, $-y \nsim x+y,$ since $y \nsim x+y.$ So, by applying (QR4), we obtain $x+y \sim -x-y \preceq x \preceq y,$ a contradiction. 
\end{proof}

\noindent
Finally, we can prove the main theorem of this section:

\begin{theorem} \label{qoringcomp} Let $(R,\preceq)$ be a quasi-ordered ring and let $v$ be a Manis valuation on $R.$ \vspace{1mm}

\begin{itemize}
\item[(a)] The following are equivalent: \vspace{1mm}

\begin{itemize}
 \item[(1)] $v$ is compatible with $\preceq.$ \vspace{1mm}

 \item[(2)] $I_v$ is convex. \vspace{1mm}

 \item[(3)] $\preceq$ induces canonically via the residue map $x \mapsto x + I_v$ a quasi-order $\overline{\preceq}$ with support $\{0\}$ on the residue class domain $Rv.$
\vspace{1mm}

\end{itemize}

\noindent
Moreover, any of these conditions implies that $R_v$ is convex. \vspace{1mm}

\item[(b)] If $v$ is non-trivial, then \vspace{1mm}

\begin{itemize}
\item[(4)] $R_v$ is convex \vspace{1mm}

\end{itemize}
is equivalent to the conditions $\mathrm{(1) - (3)}$.
\end{itemize}
\end{theorem}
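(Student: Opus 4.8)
The plan is to prove the chain $(1)\Rightarrow(2)\Rightarrow(3)\Rightarrow(1)$ in part (a), then separately record that each of these implies convexity of $R_v$, and finally prove $(4)\Rightarrow(1)$ (say) in part (b) using non-triviality; the reverse implications in (b) are already covered by part (a). The implication $(1)\Rightarrow(2)$ is the short one already sketched in the table's proof: if $0\preceq y\preceq z$ with $z\in I_v$, compatibility gives $v(z)\le v(y)$, hence $v(y)>0$, so $y\in I_v$; since $I_v=-I_v$ and $0\in I_v$, Lemma~\ref{...} upgrades this to full convexity. The implication $(5)\Rightarrow(4)$-type step, namely $(3)\Rightarrow$ (something), I would route through compatibility directly rather than through $I_v\prec 1$.

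First I would do $(2)\Rightarrow(3)$, which is the heart of the matter. Assuming $I_v$ convex, I want to show the relation $\overline{\preceq}$ on $Rv$ defined by $\phi_v(x)\overline{\preceq}\phi_v(y):\Leftrightarrow x\preceq y$ (for $x,y\in R_v$) is well-defined, and that it is a quasi-order with trivial support. Well-definedness is exactly where Lemma~\ref{convexity} does the work: if $x\sim x'$ and $y\sim y'$ with $x-x',\,y-y'\in I_v$, I must check $x\preceq y\Leftrightarrow x'\preceq y'$. Writing $x'=x+c$, $y'=y+d$ with $c,d\in I_v$, I reduce to the one-variable statements. Here one uses Lemma~\ref{convexity}(1) to know no unit is $\sim$-equivalent to an element of $I_v$, and parts (2) and (3) to control the sign of $u+c$ for $u\in U_v$; when $x$ or $y$ lands in $\mathfrak q_v\subseteq E_0$ (note $\mathfrak q_v\subseteq I_v$ so actually it's already absorbed) the support being an ideal (Remark~\ref{rema}) handles it, and Remark~\ref{rema}(1) handles adding a support element. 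The transitivity, totality and reflexivity of $\overline{\preceq}$ are inherited from $\preceq$ once well-definedness is in hand; the axioms (QR1)–(QR5) for $(Rv,\overline{\preceq})$ transfer because $\phi_v$ is a surjective ring homomorphism onto $Rv$ (surjectivity of $\phi_v$ onto $Rv$ is automatic; surjectivity of $v$, i.e. the Manis hypothesis, is what guarantees $R_v$ maps \emph{onto} $Rv$ and, more importantly, is used to realize residues and to invert units modulo $I_v$). That the support of $\overline\preceq$ is $\{0\}$ is precisely Lemma~\ref{convexity}(1): $\phi_v(x)\in E_{\overline 0}$ means $x\sim c$ for some $c\in I_v$ (after adjusting by $I_v$), forcing $x\in I_v$, i.e. $\phi_v(x)=0$.

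Next, $(3)\Rightarrow(1)$. Suppose $\overline\preceq$ with support $\{0\}$ exists and take $0\preceq y\preceq z$; I must show $v(z)\le v(y)$. I would argue by contradiction: if $v(y)<v(z)$, pick (using the Manis property!) an element $t\in R$ with $v(t)=-v(y)$, so that $v(ty)=0$ and $v(tz)=v(z)-v(y)>0$, i.e. $ty\in U_v$ and $tz\in I_v$. Multiplying the chain $0\preceq y\preceq z$ — after separating the cases $0\preceq t$ and $t\prec 0$, using (QR3) and Lemma~\ref{QR5}/Corollary~\ref{corpres} to keep track of how $\preceq$ behaves — I get a comparison between a unit and an element of $I_v$; passing to residues gives $\overline{ty}\overline\preceq \overline{0}$ with $\overline{ty}\ne\overline 0$, contradicting $0\prec 1$ in $(Rv,\overline\preceq)$ (or, if $t\prec 0$ flips things, $\overline 0\overline\preceq\overline{ty}$ combined with the reverse, forcing $\overline{ty}$ into the support). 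A little care is needed because $v(z)$ could be $\infty$ (then $z\in\mathfrak q_v\subseteq E_0$, so $y\in E_0$ too and there is nothing to prove) and because multiplying by $t$ with $t\prec 0$ reverses the quasi-order on the $U_v$-part only up to $\sim$; Lemma~\ref{QR5} and Corollary~\ref{corpres} are the tools for that.

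The remaining pieces are lighter. ``Any of (1)–(3) implies $R_v$ convex'': assuming (1), repeat the $(1)\Rightarrow(2)$ argument with $R_v$ in place of $I_v$, again invoking Lemma~\ref{...}. For part (b), $(1)$–$(3)$ already give (4), so only $(4)\Rightarrow$ one of the earlier conditions is needed, and this is where non-triviality enters: since $\Gamma_v\ne\{0\}$ and $v$ is Manis, there is $s\in R$ with $v(s)>0$, hence a genuine element of $I_v\setminus\mathfrak q_v$. I would show $(4)\Rightarrow(2)$: given $0\preceq y\preceq z$ with $z\in I_v$, I want $y\in I_v$; from $R_v$-convexity I already get $y\in R_v$, and if $y\in U_v$ I derive a contradiction by scaling — multiply by a suitable $t$ with $v(t)=v(s)>0$ chosen via the Manis property so that $tz$ still lies in $I_v$ (indeed $v(tz)=v(t)+v(z)>0$) while $ty$ acquires value exactly $v(s)>0$; then $0\preceq ty\preceq tz$ (case split on the sign of $t$ as before) sits inside $R_v$, but one more scaling by an element of value $-v(s)$ pushes $ty$ back to a unit while keeping $tz$ of positive value, contradicting $R_v$-convexity in the limit — more cleanly, iterate or directly use that $y\in U_v$, $z\in I_v$, $y\preceq z$ would give, after multiplying by an element of value $N\cdot v(s)$ for large $N$ chosen so $v$-value of the scaled $z$ exceeds that of a fixed unit, a violation of $R_v$-convexity. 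The cleanest route is probably: $y\in U_v$ and $z\in I_v$ with $0\preceq y\preceq z$; multiply by $t$ with $v(ty)=0$ preserved, impossible since already $v(y)=0$, so instead use $0\preceq y\preceq z$, multiply both sides appropriately to land $y^{-1}$-like behaviour — I expect the honest argument to go: pick $t$ with $v(t)=-v(z)<0$; then $v(tz)=0$, $v(ty)=v(y)-v(z)=-v(z)<0$, so $ty\notin R_v$ while $tz\in U_v\subseteq R_v$ and $0\preceq tz\preceq$ ... no — one must instead use that from $y\preceq z$ and $0\prec$ a unit of value $-v(z)$ we'd get, via (QR3) and (QR5), a comparison placing an element of negative value between $0$ and a unit, contradicting $R_v$-convexity.

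\textbf{Main obstacle.} The well-definedness of $\overline\preceq$ in $(2)\Rightarrow(3)$ is the real work: it is a careful case analysis ($x,y$ in $U_v$, in $I_v$, in $\mathfrak q_v$, and whether they are $\prec 0$, $\sim 0$, or $\succ 0$) in which Lemma~\ref{convexity}(1)–(3) must be deployed exactly, together with Remark~\ref{rema}(1)–(2) for the support cases. The second trickiest point is $(3)\Rightarrow(1)$ and the part~(b) implication, where one has to exploit the \emph{Manis} (surjectivity) hypothesis to produce scaling elements of prescribed value and then manage the sign reversal caused by multiplying a quasi-order by a negative element — handled by Corollary~\ref{corpres} and Lemma~\ref{QR5}, but needing attention to the $\mathfrak q_v$/$E_0$ degenerate cases throughout.
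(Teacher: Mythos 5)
The central gap is in your step $(2)\Rightarrow(3)$. You define the induced relation by $\phi_v(x)\,\overline{\preceq}\,\phi_v(y):\Leftrightarrow x\preceq y$ and claim that Lemma \ref{convexity} makes this well-defined, after which transitivity and the axioms are ``inherited'' because $\phi_v$ is a surjective ring homomorphism. That relation is in fact not well-defined: in any ordered field with $v$ compatible (so $I_v$ convex) take $0\prec\epsilon\in I_v$; then $0$ and $-\epsilon$ represent the same residue class, yet $0\preceq 0$ while $0\npreceq-\epsilon$, so the truth value depends on the chosen representatives whenever the two classes coincide. The paper's definition is the existential one, $\overline{x}\preceq'\overline{y}:\Leftrightarrow \exists c_1,c_2\in I_v\colon x+c_1\preceq y+c_2$, for which well-definedness is immediate but transitivity, the triviality of the support, and the axioms (QR1), (QR3)--(QR5) are exactly where the work lies: transitivity needs the case split $y\in U_v$ versus $y\in I_v$ with Lemma \ref{convexity}, and (QR5) needs the dichotomy on the sign of $-1$ together with Lemma \ref{lemmaineq}. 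Your assertion that the quasi-order axioms ``transfer'' along the surjection is precisely what is not automatic here; a quasi-order does not push forward through a quotient without controlling perturbations by $I_v$, and this is the heart of the theorem, which your plan sidesteps.

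Two smaller points. In $(3)\Rightarrow(1)$ you dispose of the case $v(z)=\infty$ by ``$z\in\mathfrak q_v\subseteq E_0$, so $y\in E_0$ and there is nothing to prove''; nothing in the hypotheses relates $\mathfrak q_v$ to $E_0$ (consider the valuation on $\mathbb Z$ with support $2\mathbb Z$ against the usual order), and even $y\in E_0$ would not give $v(y)=\infty$ --- fortunately your scaling by $t$ with $v(t)=-v(y)$ covers $v(z)=\infty$ anyway, so this parenthesis should simply be dropped. In part (b) your final argument chooses $t$ with $v(t)=-v(z)$, which does not exist when $z\in\mathfrak q_v$; the paper splits into $z\notin\mathfrak q_v$ (scale by an element of value $-v(z)$, getting $0\preceq ty\preceq tz$ with $tz\in R_v$ but $v(ty)<0$) and $z\in\mathfrak q_v$ (scale by any element of negative value, which exists exactly because $v$ is non-trivial and Manis), and the second case --- the one place where non-triviality is genuinely used --- is missing from your sketch, which is visibly unsettled there. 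Finally, the sign case-splits you plan for the scaling element (via Lemma \ref{QR5} and Corollary \ref{corpres}) can be avoided entirely by the paper's device of replacing $a$ by $-a$, since $v(a)=v(-a)$, so that without loss of generality $0\preceq a$ and (QR3) applies directly.
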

\begin{proof} \hspace{7cm}
\begin{itemize}
\item[(a)] We first prove that (1) and (2) are equivalent. So suppose that (1) holds and let $0 \preceq y \preceq z$ with $z \in I_v.$ Then (1) yields that $0< v(z) \leq v(y),$ and therefore $y \in I_v.$ Now suppose that $I_v$ is convex and assume for a contradiction that there exist some $0 \preceq y \preceq z$ such that 
$v(y) < v(z).$ Note that $y \notin \mathfrak{q}_v.$ Hence, since $v$ is Manis, we find some $0 \preceq a$ such that $v(a) = -v(y)$ (for if $a \prec 0,$ then $0 \preceq -a$ and $v(a) = v(-a)$). Via axiom (QR3) follows $0 \preceq ay \preceq az$ with $v(ay) = 0$ and $v(az) = v(z)-v(y) > 0,$ so $az \in I_v$ but $ay \notin I_v.$ This contradicts the convexity of $I_v.$ 
\vspace{1mm}

\noindent
We continue by showing that (2) and (3) are equivalent. First suppose that (3) holds and let $0 \preceq y \preceq z$ with $z \in I_v.$ Assume for a contradiction $y \notin I_v.$ Choose $a \in R$ with $0 \preceq a$ and $v(a) = -v(y).$ Then $0 \preceq ay \preceq az$ with $v(ay) = 0$ and $v(az) > 0.$ Taking residues, it follows $0 \preceq' \overline{ay} \preceq' 0.$ Since the support of $\preceq'$ is trivial, this yields that $\overline{ay} = 0,$ contradicting $v(ay) = 0,$ i.e. $ay \notin I_v.$ Therefore, $y \in I_v.$ \vspace{1mm}

\noindent
Now suppose that (2) holds. The quasi-order induced by the residue map is given by
$$\overline{x} \preceq' \overline{y} :\Leftrightarrow \exists c_1,c_2 \in I_v: x+c_1 \preceq y+c_2.$$
First of all we verify that $\preceq'$ is well-defined. So assume that $\overline{x} \preceq' \overline{y},$ and let $\overline{x} = \overline{x_1}$ and $\overline{y} = \overline{y_1},$ say $x = x_1 + c_1$ and $y = y_1 +c_2$ for some $c_1,c_2 \in I_v.$ There exist some $c_3,c_4 \in I_v$ such that $x+c_3 \preceq y+c_4.$ But then $x_1+(c_1+c_3) \preceq y_1+(c_2+c_4),$ thus, $\overline{x_1} \preceq' \overline{y_1}.$ \vspace{1mm}

\noindent
Evidently, $\preceq'$ is reflexive and total. Next we show transitivity. So assume that $x+c_1 \preceq y+c_2$ and $y+d_1\preceq z+d_2$ for some $c_1,c_2,d_1,d_2 \in I_v.$ We argue by case distinction. First suppose that $y \in U_v.$ Assume for a contradiction that $x+e_1 \succ z+e_2$ for all $e_1,e_2 \in I_v.$ In particular, 
$x+c_1+d_1-c_2 \succ z+d_2.$ Note that $y+c_2 \in U_v$ and $d_1-c_2 \in I_v,$ so Lemma \ref{convexity}(1) yields $y+c_2 \nsim d_1-c_2.$ So from the inequality 
$x+c_1 \preceq y+c_2$ follows $$x+c_1+d_1-c_2 \preceq y+c_2+d_1-c_2 = y+d_1 \preceq z+d_2,$$ a contradiction. \vspace{1mm}

\noindent
If $y \in I_v,$ then $y+c_2$ and $y+d_1 \in I_v.$ By convexity and Lemma \ref{convexity}(2) and (3), this yields that $x$ is either a negative unit or in $I_v,$ and that $z$ is either a positive unit or in $I_v.$ We only have to consider the case where both elements are in the valuation ideal. But then $x+(z-x) \preceq z+0,$ and thus $\overline{x} \preceq' \overline{z}.$ \vspace{1mm}

\noindent
Now we show that the support of $\preceq'$ equals $\{0\}.$ So let $\overline{x} \sim 0$ and assume for a contradiction that $x \in R_v \backslash I_v = U_v.$ Then there exist $c_1,c_2 \in I_v$ such that $x+c_1 \preceq c_2$ and there exist $d_1,d_2 \in I_v$ such that $d_1 \preceq x+d_2.$ \\ 
If $0 \prec x,$ then $0 \prec x + c_1 \preceq c_2$ by Lemma \ref{convexity}(2), and we have $x+c_1 \in I_v$ by convexity, a contradiction. Likewise, if $x \prec 0,$ then $d_1 \preceq x+d_2 \prec 0$ by Lemma \ref{convexity}(3), again contradicting the convexity. \vspace{1mm}

\noindent
It remains to check the axioms (QR1) and (QR3) - (QR5) (axiom (QR2) is omitted because of Remark \ref{rema}(3)). 
\begin{itemize} \vspace{1mm}

 \item[(QR1)] Assume for a contradiction that $\overline{1} \preceq' \overline{0}.$ Then there exist $c_1,c_2 \in I_v$ such that $0 \prec 1+c_1 \preceq c_2$ 
(Lemma \ref{convexity}(2)). Convexity of $I_v$ yields $1+c_1 \in I_v,$ and therefore $1 \in I_v,$ a contradiction. Thus, $\overline{0} \prec' \overline{1}.$ 
\vspace{1mm}

 \item[(QR3)] We have to verify that $0 \preceq' \overline{x}$ and $\overline{y} \preceq' \overline{z}$ implies $\overline{xy} \preceq' \overline{xz}.$ For $\overline{x} = 0,$ there is nothing to show, so assume without loss of generality $x \notin I_v.$ From $0 \preceq' \overline{x}$ follows that there are some $c_1,c_2 \in I_v$ such that $c_1 \preceq x+c_2.$ Applying Lemma \ref{convexity}(1) yields that $c_1-c_2 \preceq x.$ So convexity of $I_v$ gives us $0 \preceq x.$ Moreover, 
$\overline{y} \preceq' \overline{z}$ means $y+d_1 \preceq z+d_2$ for some $d_1,d_2 \in I_v.$ (QR3) implies $xy+xd_1 \preceq xz+xd_2,$ and therefore 
$\overline{xy} \preceq \overline{xz}.$ \vspace{1mm}

\item[(QR4)] We have to prove that $\overline{x} \preceq' \overline{y}$ and $\overline{y} \not\sim \overline{z}$ yields $\overline{x+z} \preceq' \overline{y+z}.$ Let $c_1,c_2 \in I_v$ such that $x+c_1 \preceq y+c_2.$ Note that $\overline{y} \not\sim \overline{z}$ implies either 
$\forall e_1,e_2: y+e_1 \prec z+e_2$ or $\forall e_1,e_2: y+e_1 \succ z+e_2.$ Either way, $z \not\sim y+c_2.$ But then $x+z+c_1 \preceq y+z+c_2$ by (QR4), i.e. 
$\overline{x+z} \preceq' \overline{y+z}.$ \vspace{1mm}

\item[(QR5)] We have to show that if $0 \prec' \overline{a},$ then $\overline{ax} \preceq' \overline{ay}$ implies $\overline{x} \preceq' \overline{y}.$ Note that if 
$ax \preceq ay,$ then $x \preceq y$ by axiom (QR5), hence $\overline{x} \preceq' \overline{y}.$ So from now on assume that $ay \prec ax.$ First we show that one may also assume that $x,y \in U_v.$ Indeed, suppose that $\overline{x} = 0$ and assume for a contradiction that $\overline{y} \prec' 0.$ Then $\overline{ay} \preceq' 0$ by axiom (QR3). But equality cannot hold because neither $a \in I_v,$ nor $y \in I_v.$ Thus, $\overline{ay} \prec' 0 = \overline{ax},$ contradicting the assumption. Now suppose that $\overline{y} = 0$ and assume for a contradiction that $0 \prec' \overline{x}.$ Then $\overline{ay} = 0 \prec \overline{ax},$ again a contradiction. Hence, one may assume that both $x$ and $y$ lie in $U_v.$ So from $\overline{ax} \preceq' \overline{ay}$ follows that there exists some $c \in I_v$ such that 
$ax \preceq ay+c.$ Thus, it holds $ay \prec ax \preceq ay+c.$ The rest of the proof is done by case distinction. \\
If $0 \prec -1,$ then $0 \preceq -r$ for all $r \in R$ with $0 \preceq r$ by (QR3). This yields that all elements are non-negative. Particularly, since $ay$ is a unit and $I_v$ is convex, it holds $c \prec ay$ (otherwise $ 0 \prec ay \preceq c \in I_v$). From Lemma \ref{lemmaineq} follows 
$ay \prec ay+c \preceq \max\{ay+c\} = ay,$ the desired contradiction. \vspace{1mm}

\noindent
Finally suppose $-1 \prec 0.$ Consider the inequalities $ay \prec ax \preceq ay+c.$ By Lemma \ref{convexity}(2) and (3), $ay$ and $ay+c$ have the same sign, and so 
$ax$ has also the same sign, which is contrary to the sign of $-ay.$ Particularly, we may add $-ay$ to these two inequalities and obtain $0 \preceq a(x-y) \preceq c.$ By convexity of $I_v$ follows $a(y-x) \in I_v$ and since $I_v$ is a prime ideal with $a \notin I_v,$ one obtains $\overline{x} = \overline{y}.$ Particularly, 
$\overline{x} \preceq' \overline{y},$ as desired. 
\end{itemize} \vspace{1mm}

\noindent
The convexity of $R_v$ follows immediately from (1), just like the convexity of $I_v.$ \vspace{1mm}

\item[(b)]
It suffices to show that (4) implies (2). So let $0 \preceq y \preceq z$ with $z \in I_v.$ Assume for that $y \notin I_v,$ so by convexity of $R_v$ it holds 
$y \in R_v-I_v = U_v,$ i.e. $v(y) = 0.$ Since $z \in I_v,$ we get $\gamma := v(z) > 0.$ We distinguish the two cases, whether $z \in \mathfrak{q}_v$ or not. \\
If $z \notin \mathfrak{q}_v,$ there exists some $0 \preceq a \in R$ such that $v(a) = -\gamma < 0.$ Axiom (QR3) yields $0 \preceq ay \preceq az.$ As $0$ and $az$ lie in $R_v,$ it follows by convexity of $R_v$ that $ay \in R_v,$ i.e. $v(ay) \geq 0.$ However, $v(ay) = v(a) + v(y) < 0,$ a contradiction. \\
If $z \in \mathfrak{q}_v,$ choose some $0 \preceq a \in R$ with $v(a) < 0,$ which exists since $v$ is a non-trivial Manis valuation. Then $0 \preceq ay \preceq az$ with $az \in R_v.$ However, $ay \notin R_v,$ contradicting the convexity of $R_v.$
\end{itemize}
\end{proof}

\begin{remark} \hspace{7cm}

\begin{enumerate} \label{qoringcompapp}
 \item The assumption in (b) that $v$ is non-trivial is crucial, no matter which kind of a quasi-order $\preceq$ is. \\
For the ordered case consider $\mathbb{Z}$ with its unique order and the trivial valuation $v$ mapping the even integers to $\infty$ and the odd integers to $0.$ Then $R_v = \mathbb{Z}$ is convex, while $I_v = 2\mathbb{Z}$ is not. \vspace{1mm}

\noindent
In the case $\preceq = \preceq_w,$ take the same $v$ and let $w$ be the $p$-adic valuation on $\mathbb{Z}$ for some prime $p>2.$ Then $R_v$ is clearly convex. However, choosing $y = 2$ and $z = 1$ yields $0 = w(y) \leq w(z) = 0$ and $0 < v(y) = \infty,$ but $0 = v(z).$ \vspace{1mm}

 \item Instead of $v$ non-trivial, one may have also demanded that $\mathfrak{q}_v = E_0$ for part (b), i.e. that the supports coincide. Then $z \in \mathfrak{q}_v$ yields $z \in E_0,$ so also $y \in E_0 = \mathfrak{q}_v \subseteq I_v$ by transitivity of $\preceq.$  \vspace{1mm}

 \item $I_v \prec 1$ (compare Theorem \ref{compfield}) is an easy consequence of these conditions. It follows for instance immediately from the convexity of $I_v.$ 
\vspace{1mm}

 \item If $\preceq$ is an order (respectively a proper quasi-order), then $\preceq'$ is also an order (respectively a proper quasi-order).
\begin{proof} First suppose that $\preceq$ is an order. Comparing the definitions of ordered rings (Definition \ref{oring}) and quasi-ordered rings (Definition \ref{qoring}), we only have to show that $\preceq'$ is compatible with $+.$ From $\overline{x} \preceq' \overline{y}$ follows $x+c_1 \preceq y+c_2$ for some 
$c_1,c_2 \in I_v.$ Since $\preceq$ is an order, we get $x+z+c_1 \preceq y+z+c_2,$ thus, $\overline{x+z} \preceq' \overline{y+z}.$ So $\preceq'$ is indeed an order. \vspace{1mm}

\noindent
Finally, suppose that $\preceq = \preceq_w$ for some valuation $w$ on $R.$ We consider the map $w/v: Rv \to \Gamma_{w/v} \cup \{\infty\}$ given by
\[
 w/v(a+I_v) := \begin{cases} \infty &\mbox{if } a \in I_v \\
w(a) & \mbox{else} \end{cases}.
\] 
(compare \cite[p.45]{prestel} for the field case). We prove that $w/v$ is well-defined. For $a \in I_v$ this is clear by definition. So suppose that $a \in U_v$ and $c \in I_v.$ We have to show that $w(a) = w(a+c).$ From condition (1) of the previous theorem we obtain for all $x,y \in R$ that if $w(x) \leq w(y),$ then 
$v(x) \leq v(y).$ Hence, it follows from $v(a) = 0 < v(c)$ that also $w(a) < w(c).$ Lemma \ref{valmin} yields $w(a+c) = \min\{w(a),w(c)\} = w(a).$ \vspace{1mm}

\noindent
It is easy to see that $w/v$ satisfies the axioms (V1) and (V2) from Definition \ref{ringval}. For (V3) note that $ab \in I_v$ if and only if $a \in I_v$ or $b \in I_v,$ since $I_v$ is prime, so $w/v(ab+I_v) = \infty$ if and only if $w/v(a+I_v) + w/v(b+I_v) = \infty.$ From this observation (V3) is easily deduced. The proof of (V4) is done by a similar case distinction. Hence, $w/v$ defines a valuation on $Rv.$ Its support is $\{0\},$ as $\mathfrak{q}_w \subseteq \mathfrak{q}_v \subseteq I_v,$ which again follows from Theorem \ref{qoringcomp}(1). Moreover, for $x,y \in U_v$ (i.e. $\overline{x},\overline{y} \neq 0$) it holds
\begin{align*}
\overline{x} \preceq_w' \overline{y} &\Leftrightarrow x+c_1 \preceq_w y+c_2 \textrm{ for some } c_1,c_2 \in I_v \\ & \Leftrightarrow w(y+c_2) \leq w(x+c_1) \textrm{ for some } c_1,c_2 \in I_v 
\\ & \Leftrightarrow w(y) \leq w(x) \\  &\Leftrightarrow w/v(\overline{y}) \leq w/v(\overline{x}),
\end{align*}
where the third equivalence follows precisely as in the proof of the well-definedness of $w/v,$ while the last equivalence is just the definition of $w/v.$ This proves that $\preceq_w' = \preceq_{w/v}.$
\end{proof}
\end{enumerate}
\end{remark}

\noindent
If $\preceq$ is an order, then Theorem \ref{qoringcomp} generalizes Theorem \ref{compfield} from ordered fields to ordered rings. Next we show that if $\preceq = \preceq_w$ for some Manis valuation $w,$ then Theorem \ref{qoringcomp} precisely characterizes the Manis valuations $v$ on $R$ that are coarser than $w.$

\begin{definition} \label{coarsening} (see \cite[p.415]{grif}) Let $v,w$ be valuations on $R.$ Then $v$ is said to be a \textbf{coarsening} of $w$ (or $w$ a \textbf{refinement} of $v$), in short, $v \leq w,$ if there exists an order homomorphism $\phi: \Gamma_w \to \Gamma_v$ such that $v = \phi \circ w,$ or equivalently, if 
$R_w \subseteq R_v$ and $I_v \subseteq I_w.$ 
\end{definition}

\begin{lemma} \label{powerssup} Let $v \leq w$ be non-trivial Manis valuations on $R.$ Then $q_v = q_w.$
\begin{proof} This is part of \cite[Proposition 3.1]{powers}.
\end{proof}
\end{lemma}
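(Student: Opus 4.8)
The plan is to read off the conclusion from the ``order homomorphism'' description of $v \leq w$ in Definition \ref{coarsening}: there is a homomorphism of ordered abelian groups $\phi \colon \Gamma_w \to \Gamma_v$ with $v = \phi \circ w$, where (as is implicit in the very formula $v = \phi \circ w$, since $\mathfrak{q}_w \neq \emptyset$) one extends $\phi$ by $\phi(\infty) = \infty$. Since the codomain of $\phi$ is $\Gamma_v$, which does not contain $\infty$, we have $\phi(\gamma) \in \Gamma_v$ for every $\gamma \in \Gamma_w$, and hence $\phi^{-1}(\infty) = \{\infty\}$. Therefore, for any $x \in R$,
\[
x \in \mathfrak{q}_v \Leftrightarrow v(x) = \infty \Leftrightarrow \phi(w(x)) = \infty \Leftrightarrow w(x) = \infty \Leftrightarrow x \in \mathfrak{q}_w,
\]
which is the claim. (This short computation does not even need that $v,w$ are Manis or non-trivial.)

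If one prefers to argue from the equivalent condition $R_w \subseteq R_v$ and $I_v \subseteq I_w$ --- which is presumably closer to \cite[Proposition 3.1]{powers}, and where the hypotheses actually get used --- one proceeds as follows. For $\mathfrak{q}_v \subseteq \mathfrak{q}_w$: if $v(x) = \infty$, then $x \in \mathfrak{q}_v \subseteq I_v \subseteq I_w$, so $w(x) > 0$; were $w(x) = \gamma$ finite, surjectivity of $w$ would give $a \in R$ with $w(a) = -\gamma$, forcing $w(ax) = 0$, hence $ax \in R_w \subseteq R_v$ and $v(ax) \geq 0$, contradicting $v(ax) = v(a) + v(x) = \infty$. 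For $\mathfrak{q}_w \subseteq \mathfrak{q}_v$: if $w(x) = \infty$ but $v(x) = \gamma'$ is finite, then (using that $v$ is Manis and $\Gamma_v$ non-trivial) one may pick $c \in R$ with $v(c) < -\gamma'$, so that $v(xc) < 0$; then $xc \notin R_v \supseteq R_w$, whence $w(xc) < 0$, contradicting $w(xc) = w(x) + w(c) = \infty$.

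I do not expect a genuine obstacle; the only point to watch is the bookkeeping around the symbol $\infty$, namely that $\phi$ --- respectively the elements $a$, $c$ extracted from surjectivity --- is never allowed to produce an $\infty$ where a finite value is required. Beyond that, the argument uses only the axioms (V1)--(V4) and, in the second version, Lemma \ref{valmin}. I would present the first, one-line computation as the actual proof and simply refer to \cite[Proposition 3.1]{powers}.
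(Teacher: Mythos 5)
Your one-line argument from the homomorphism formulation does not prove the intended statement. In this paper the operative meaning of ``$v$ is a coarsening of $w$'' is the ring-theoretic clause $R_w \subseteq R_v$ and $I_v \subseteq I_w$: that is the hypothesis actually available in Lemma \ref{coarse}, where the present lemma is invoked precisely to recover $\mathfrak{q}_v = \mathfrak{q}_w$. The formula $v = \phi \circ w$ (with the convention $\phi(\infty)=\infty$) already builds the equality of supports into the hypothesis, so reading it back off is vacuous; indeed the equivalence of the two clauses in Definition \ref{coarsening} is only valid for non-trivial Manis valuations, and this lemma is part of what justifies it. Your own observation that the computation needs neither Manis nor non-triviality is the tell-tale sign that it is not proving the statement whose content the paper (following \cite[Proposition 3.1]{powers}) needs. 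So the second, containment-based argument is the one that matters.

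There, the inclusion $\mathfrak{q}_w \subseteq \mathfrak{q}_v$ is proved correctly (and this is where non-triviality of $v$ enters), but the inclusion $\mathfrak{q}_v \subseteq \mathfrak{q}_w$ contains a genuine error: from $w(ax)=0$ you conclude $ax \in R_w \subseteq R_v$, hence $v(ax) \geq 0$, and call this a contradiction with $v(ax) = v(a)+v(x) = \infty$ --- but $\infty \geq 0$ holds, so there is no contradiction. The correct move is to use the other containment: $w(ax)=0$ means $ax \notin I_w$, hence $ax \notin I_v$ because $I_v \subseteq I_w$, i.e.\ $v(ax) \leq 0$, and this does contradict $v(ax) = v(a) + \infty = \infty$. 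With that repair (and dropping the reference to Lemma \ref{valmin}, which is never used), your containment-based proof is a sound self-contained substitute for the citation to Powers.
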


\noindent
Actually, Power's proof of the previous result only requires that $v$ is non-trivial. However, from $v$ non-trivial and $v \leq w$ follows immediately that $w$ is also non-trivial.

\begin{lemma} \label{coarse} Let $v$ and $w$ be non-trivial Manis valuations on $R.$ The following are equivalent:
\begin{enumerate}
 \item $v$ is $\preceq_w$-compatible $\mathrm{(}$i.e. $w(y) \leq w(z) \Rightarrow v(y) \leq v(z)\mathrm{)}.$ \vspace{1mm}

 \item $v$ is a coarsening of $w.$
\end{enumerate}
\begin{proof} We first show that (1) implies (2). Let $x \in R_w.$ Then $0 = w(1) \leq w(x),$ so also $0 = v(1) \leq v(x),$ thus $x \in R_v.$ Likewise, if $x \notin I_w,$ then $w(x) \leq w(1) = 0,$ which yields that $v(x) \leq v(1) = 0.$ Therefore $x \notin I_v.$ \vspace{1mm}

\noindent
Conversely, assume that (2) holds and suppose that $w(y) \leq w(z).$ By the previous lemma we get $\mathfrak{q}_w = \mathfrak{q}_v,$ so we may assume that $y$ is not in the support of these valuations. Moreover note that 
$U_w \subseteq U_v;$ indeed, if $u \in U_w,$ then $u \in R_w$ and $u \notin I_w,$ thus $u \in R_v$ and $u \notin I_v.$ Therefore, $u \in U_v.$ As $w(y) \in \Gamma_w$ and $w$ is Manis, there exists some $a \in R$ such that 
$w(a) = -w(y).$ It follows $ay \in U_w$ and $az \in R_w.$ Therefore, $ay \in U_v$ and $az \in R_v.$ It is easy to see that this implies $v(y) \leq v(z).$
\end{proof}
\end{lemma}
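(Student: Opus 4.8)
The plan is to reduce both statements to the purely ring-theoretic description of coarsening from Definition~\ref{coarsening}, namely the pair of inclusions $R_w \subseteq R_v$ and $I_v \subseteq I_w$, and to translate between inequalities of values and memberships in $R_v$, $I_v$, $U_v$ by exploiting that $v$ and $w$ are surjective.

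For the implication (1) $\Rightarrow$ (2) I would test membership against $1$, using $w(1) = v(1) = 0$. If $x \in R_w$ then $w(1) = 0 \leq w(x)$, so $\preceq_w$-compatibility of $v$ yields $v(1) \leq v(x)$, i.e. $x \in R_v$; hence $R_w \subseteq R_v$. Dually, if $x \notin I_w$ then $w(x) \leq 0 = w(1)$, so $v(x) \leq v(1) = 0$, i.e. $x \notin I_v$; hence $I_v \subseteq I_w$. This direction is entirely routine.

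For (2) $\Rightarrow$ (1) assume $R_w \subseteq R_v$ and $I_v \subseteq I_w$, and take $y,z \in R$ with $w(y) \leq w(z)$; the goal is $v(y) \leq v(z)$. Two preliminary observations: first, the two inclusions combine to give $U_w \subseteq U_v$, since an element of $U_w = R_w \setminus I_w$ then lies in $R_v$ and outside $I_v$; second, by Lemma~\ref{powerssup} we have $\mathfrak{q}_v = \mathfrak{q}_w$, which disposes of the case $y \in \mathfrak{q}_w$ immediately, because then $w(y) = \infty \leq w(z)$ forces $z \in \mathfrak{q}_w = \mathfrak{q}_v$ and so $v(y) = v(z) = \infty$. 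Thus we may assume $y \notin \mathfrak{q}_w$, i.e. $w(y) \in \Gamma_w$, and this is where the Manis hypothesis enters: we pick $a \in R$ with $w(a) = -w(y)$. Then $w(ay) = 0$, so $ay \in U_w \subseteq U_v$, which gives $v(a) = -v(y)$ (note $v(y) \in \Gamma_v$ because $y \notin \mathfrak{q}_v$); and $w(az) = w(z) - w(y) \geq 0$, so $az \in R_w \subseteq R_v$, which gives $v(a) + v(z) \geq 0$, that is $v(z) \geq v(y)$ (trivially true already when $z \in \mathfrak{q}_v$).

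The only genuinely non-mechanical point, and the one I expect to be the main obstacle, is that $R$ need not be a field: one cannot divide by $y$ to reduce to the case $w(y) = 0$, so instead one multiplies by an element $a$ realizing the value $-w(y)$, whose existence is precisely the surjectivity of $w$. The attendant bookkeeping — making sure the values being manipulated lie in the groups $\Gamma_v$, $\Gamma_w$ rather than equalling $\infty$ — is handled by Lemma~\ref{powerssup} ($\mathfrak{q}_v = \mathfrak{q}_w$) together with the separate treatment of $y \in \mathfrak{q}_w$; everything else is a direct dictionary between $v(\cdot) \geq 0$, $> 0$, $= 0$ and membership in $R_v$, $I_v$, $U_v$.
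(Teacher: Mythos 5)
Your proposal is correct and follows essentially the same route as the paper: the forward direction by testing against $1$, and the converse via Lemma \ref{powerssup}, the inclusion $U_w \subseteq U_v$, and the Manis property to pick $a$ with $w(a)=-w(y)$, then reading off $ay \in U_v$ and $az \in R_v$. You merely spell out the paper's final ``it is easy to see'' step ($v(a)=-v(y)$, $v(a)+v(z)\geq 0$), which is fine.
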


\noindent
Hence, we obtain as a special case of Theorem \ref{qoringcomp} the following characterization of coarsenings of $v$:

\begin{theorem} \label{coarser} Let $v,w$ be non-trivial Manis valuations on $R.$ Then $v$ is a coarsening of $w,$ if and only if one of the following equivalent conditions is satisfied for all $x,y \in R:$
\begin{enumerate}
 \item $w(x) \leq w(y) \Rightarrow v(x) \leq v(y),$ \vspace{1mm}

 \item $w(x) \leq w(y), 0 \leq v(x) \Rightarrow 0 \leq v(y),$ \vspace{1mm}

 \item $w(x) \leq w(y), 0 < v(x) \Rightarrow 0 < v(y),$ \vspace{1mm}

 \item $w/v: Rv \to \Gamma_{w/v} \cup \{\infty\}, \; x+I_v \mapsto \begin{cases} \infty &\mbox{if } x \in I_v \\
w(x) & \mbox{else} \end{cases}$ \quad defines a valuation with support $\{0\}.$
\end{enumerate}
\end{theorem}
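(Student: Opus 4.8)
The plan is to obtain Theorem \ref{coarser} as a direct specialization of Theorem \ref{qoringcomp} to the quasi-ordered ring $(R,\preceq_w)$ and the valuation $v$, combined with Lemma \ref{coarse} and Remark \ref{qoringcompapp}(4). Concretely, I would show that each of the four conditions is nothing but a restatement, purely in terms of the valuations $w$ and $v$, of one of the equivalent conditions of Theorem \ref{qoringcomp} for the quasi-order $\preceq_w$, and that ``$v$ is a coarsening of $w$'' corresponds, via Lemma \ref{coarse}, to the $\preceq_w$-compatibility of $v$.

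First I would unwind the definitions for the proper quasi-order $\preceq_w$. For all $x,y \in R$ we have $x \preceq_w y \Leftrightarrow w(y) \le w(x)$, and $0 \preceq_w y$ holds for every $y$. Hence Definition \ref{ringcomp} says that $v$ is $\preceq_w$-compatible exactly when $w(z) \le w(y) \Rightarrow v(z) \le v(y)$, i.e.\ precisely condition (1); by Lemma \ref{coarse} this is equivalent to $v$ being a coarsening of $w$. Next, since $I_v = -I_v$ contains $0$, Lemma \ref{...} reduces convexity of $I_v$ with respect to $\preceq_w$ to the implication: $y \preceq_w z$ and $z \in I_v$ imply $y \in I_v$ (the hypothesis $0 \preceq_w y$ being vacuous), which in valuation terms reads $w(z) \le w(y)$ and $v(z) > 0$ imply $v(y) > 0$, i.e.\ condition (3); the same argument with $R_v$ in place of $I_v$ turns ``$R_v$ convex with respect to $\preceq_w$'' into condition (2). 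Finally, the third condition in part (a) of Theorem \ref{qoringcomp} for $\preceq_w$ reads ``$\preceq_w$ induces a quasi-order with support $\{0\}$ on $Rv$'', and by Remark \ref{qoringcompapp}(4) this induced quasi-order is $\preceq_{w/v}$, where $w/v$ is exactly the map of condition (4), a valuation with support $\{0\}$; so condition (4) is the reformulation of that third condition.

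With these dictionaries in place, the proof assembles itself: Theorem \ref{qoringcomp}(a) gives (1) $\Leftrightarrow$ (3) $\Leftrightarrow$ (4), and since $v$ is non-trivial, Theorem \ref{qoringcomp}(b) adjoins (2); Lemma \ref{coarse} then identifies (1) with ``$v$ is a coarsening of $w$''. The step I expect to cause the most trouble is the implication from (4) to the other conditions: Remark \ref{qoringcompapp}(4) constructs $w/v$ and establishes $\preceq_w' = \preceq_{w/v}$ only under the standing assumption that $v$ is $\preceq_w$-compatible, so the converse is not handed to us and must be argued directly -- for instance by showing that if $w/v$ is a well-defined valuation on $Rv$ with support $\{0\}$, then, using surjectivity of $v$ to rescale to the case of units and invoking Lemma \ref{valmin}, one recovers $R_w \subseteq R_v$ and $I_v \subseteq I_w$. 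Everything outside of that implication is already packaged in Theorem \ref{qoringcomp} and amounts to bookkeeping.
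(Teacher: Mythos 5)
Your proposal is correct and follows essentially the same route as the paper, whose proof consists precisely of specializing Theorem \ref{qoringcomp} to the quasi-order $\preceq_w$, invoking Lemma \ref{coarse} to identify $\preceq_w$-compatibility with coarsening, and using Lemma \ref{...} to rewrite convexity of $R_v$ and $I_v$ as conditions (2) and (3), with Remark \ref{qoringcompapp}(4) supplying the identification $\preceq_w' = \preceq_{w/v}$ behind condition (4). Your additional caution about the direction from (4) back to the other conditions (since Remark \ref{qoringcompapp}(4) is stated under the standing compatibility hypothesis) is a point the paper's one-line proof passes over silently, and your sketch for closing it via surjectivity of $v$ and Lemma \ref{valmin} is sound.
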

\begin{proof} This is precisely Theorem \ref{qoringcomp} in the case where the quasi-order $\preceq$ comes from a Manis valuation $w,$ and Lemma \ref{coarse}. Moreover, we simplified the convexity of $R_v$ and $I_v$ (in (2) and (3)) according to Lemma \ref{...}. 
\end{proof}

\noindent
Next we show that $I_v \prec 1$ is not equivalent to all the other conditions of Theorem \ref{qoringcomp}, regardless of whether $\preceq$ is a proper quasi-order (Example \ref{exp1}) or an order (Example \ref{exp2}), even if $v$ is non-trivial.

\begin{theorem} Let $R$ be a ring, $\Gamma \subseteq \Gamma'$ ordered abelian groups, $u: R \to \Gamma \cup \{\infty\}$ a valuation on $R,$ and $\gamma \in \Gamma'.$ For $f = \sum_{i=0}^{n} a_i X^i \in R[X]$ define
\[
v(f) = \begin{cases} \infty &\mbox{if } f = 0 \\
\min\limits_{0\leq i \leq n}\{u(a_i) + i\gamma\} & \mbox{otherwise }. \end{cases}
\]
Then $v: R[X] \to \Gamma' \cup \{\infty\}$ is a valuation that extends $u.$
\end{theorem}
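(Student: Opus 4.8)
The plan is to verify the valuation axioms (V1)--(V4) for $v$ directly from the defining formula, and to observe that $v$ restricted to constant polynomials is $u$, so that $v$ extends $u$; to meet the standing convention on value groups one simply replaces $\Gamma'$ by the subgroup generated by $\Gamma_u$ together with $\gamma$, which equals $v(X)$. Axioms (V1) and (V2) are immediate: $v(0)=\infty$ by definition and $v(1)=u(1)+0\cdot\gamma=0$. Before treating (V3) and (V4) I would dispose of the degenerate cases. If $f=0$ or $g=0$, then $fg=0$ and both sides of any axiom involving $fg$ are $\infty$. If $f\neq 0$ but $v(f)=\infty$, then $u(a_i)=\infty$, i.e. $a_i\in\mathfrak q_u$, for every coefficient $a_i$ of $f$; since $\mathfrak q_u$ is a prime ideal (Facts \ref{fact}(1)), every coefficient of $fg$ then also lies in $\mathfrak q_u$, so $v(fg)=\infty=v(f)+v(g)$, and symmetrically if $v(g)=\infty$. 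Hence for the remaining arguments I may assume that $v(f)$ and $v(g)$ are elements of $\Gamma'$.

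For (V4) I would write $f=\sum_i a_iX^i$ and $g=\sum_i b_iX^i$ over a common range of indices (padding with zero coefficients, recalling $u(0)=\infty$). Then $f+g=\sum_i(a_i+b_i)X^i$, and coefficientwise (V4) for $u$ gives $u(a_i+b_i)+i\gamma\ge\min\{u(a_i)+i\gamma,\;u(b_i)+i\gamma\}$; taking the minimum over $i$ yields $v(f+g)\ge\min\{v(f),v(g)\}$. For (V3), write $fg=\sum_k c_kX^k$ with $c_k=\sum_{i+j=k}a_ib_j$. The inequality $v(fg)\ge v(f)+v(g)$ is routine: for each $k$, applying (V4) and (V3) for $u$ gives $u(c_k)\ge\min_{i+j=k}\{u(a_i)+u(b_j)\}$, hence $u(c_k)+k\gamma\ge\min_{i+j=k}\{(u(a_i)+i\gamma)+(u(b_j)+j\gamma)\}\ge v(f)+v(g)$, and one takes the minimum over $k$.

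The main obstacle is the reverse inequality $v(fg)\le v(f)+v(g)$, for which I would run the classical ``leading coefficient'' argument. Let $i_0$ be the \emph{least} index with $u(a_{i_0})+i_0\gamma=v(f)$ and $j_0$ the least index with $u(b_{j_0})+j_0\gamma=v(g)$; then by minimality $u(a_i)+i\gamma>v(f)$ for $i<i_0$ and $u(b_j)+j\gamma>v(g)$ for $j<j_0$, while $\ge v(f)$ (resp. $\ge v(g)$) holds for all indices. Now inspect $c_{i_0+j_0}=\sum_{i+j=i_0+j_0}a_ib_j$: the summand with $(i,j)=(i_0,j_0)$ has $u$-value $u(a_{i_0})+u(b_{j_0})$, while any other summand has either $i<i_0$ (forcing $j>j_0$) or $i>i_0$ (forcing $j<j_0$), and in both cases $(u(a_i)+i\gamma)+(u(b_j)+j\gamma)>v(f)+v(g)=(u(a_{i_0})+i_0\gamma)+(u(b_{j_0})+j_0\gamma)$, so cancelling the common $(i_0+j_0)\gamma$ gives $u(a_i)+u(b_j)>u(a_{i_0})+u(b_{j_0})$. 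Thus $a_{i_0}b_{j_0}$ is the unique summand of $c_{i_0+j_0}$ of strictly least $u$-value; lumping the remaining summands (if any) into a single element, whose $u$-value is at least the minimum of their values by (V4) and hence strictly larger than $u(a_{i_0}b_{j_0})$, and applying Lemma \ref{valmin}, one gets $u(c_{i_0+j_0})=u(a_{i_0}b_{j_0})=u(a_{i_0})+u(b_{j_0})$. Therefore $v(fg)\le u(c_{i_0+j_0})+(i_0+j_0)\gamma=v(f)+v(g)$, which together with the previous inequality gives (V3) and completes the proof. The only subtleties to keep track of are the coefficients lying in $\mathfrak q_u$ (where $u$-values are $\infty$) and the existence of $i_0,j_0$, both handled by the case split made at the outset.
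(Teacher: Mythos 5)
Your proof is correct and is essentially the argument the paper appeals to: the paper simply says ``as in the field case'' and cites \cite[Theorem 2.2.1]{prestel}, and your verification of (V1)--(V4) is exactly that classical Gauss-extension argument (minimum over $u(a_i)+i\gamma$, leading-index trick with the least indices $i_0,j_0$, and Lemma \ref{valmin} for the equality $u(c_{i_0+j_0})=u(a_{i_0})+u(b_{j_0})$), with the only genuinely ring-theoretic point---coefficients lying in the support $\mathfrak{q}_u$---handled correctly by your initial case split. Nothing further is needed.
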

\begin{proof} As in the field case, see \cite[Theorem 2.2.1]{prestel}.
\end{proof}

\begin{example} \label{exp1} Let $v_p: \mathbb{Q} \to \mathbb{Z} \cup \{\infty\}$ denote the $p$-adic valuation for some prime number $p \in \N$ (see \cite[p.18]{prestel}), i.e. if 
$0 \neq x = \frac{p^ra_i}{p^sb_i} \in \Q$ (using the unique prime factorization in $\mathbb{Z}$), then $v_p(x) = r-s.$ Apply the previous theorem with $\gamma = 1$ to extend $v_p$ to a valuation $v: \Q[X] \to \Z \cup \{\infty\}.$ The valuation $v$ is Manis, as $v_p$ is Manis and they have the same value group. We do the same procedure with $w$ instead of $v,$ except that this time $\gamma = 0.$ \vspace{1mm}

\noindent
Note that $v = w$ on $\mathbb{Q}$ and $v(f) = w(f)+ i$ for some $i \geq 0$ if $f \in \mathbb{Q}[X] \backslash \mathbb{Q}.$  This implies $I_v \prec_w 1.$ Indeed, $f \in I_v$ means $v(f) > 0.$ But then also $w(f) > 0  = w(1),$ and therefore $f \prec_w 1.$ However, $v$ is not compatible with $\preceq_w.$ For instance we have 
$w(X^2) = 0 < w(p) = 1 < w(0) = \infty,$ but $v(p) = 1 < v(X^2) = 2.$
\end{example}

\begin{example} \label{exp2} Consider the trivial valuation $u(x) = 0$ for $x \neq 0$ on $\mathbb{Z}.$ Extend $u$ via the previous theorem to a valuation $v$ on $\mathbb{Z}[X,Y]$ with $\gamma = 1$ (for $X$), respectively $\gamma = -1$ (for $Y$). Thus, for any $0 \neq f = \sum\limits_{i,j} a_{ij}X^iY^j \in \Z[X,Y],$ we have 
$$v(f) = \min_{i,j}\{v(a_{ij})+i-j\}.$$ Note that $v$ is a Manis valuation with value group $\mathbb{Z},$ for if $m$ is an integer, then either $v(X^m) = m$ 
(if $m \geq 0$) or $v(Y^{-m}) = m$ (else). Order $\mathbb{Z}[X,Y]$ by declaring $f \geq 0 :\Leftrightarrow f(0) \geq 0.$ Note that $v(f) \leq 0$ if $a_{00} \neq 0.$ Therefore, $I_v \subseteq \langle X,Y \rangle \subseteq E_0,$ so $I_v < 1.$ However, $I_v$ is not convex since $0 \leq Y \leq 0,$ but $Y \notin I_v.$
\end{example}

 
\begin{remark} In the case of ordered fields $(K,\leq),$ the condition $I_v < 1$ is often times replaced with the equivalent condition $1+I_v \geq 0$ (see for instance \cite[Definition 2.4]{lam} or \cite[Proposition 2.2.4]{prestel}). Note, however, that this is inappropriate for proper quasi-orders, as $1+I_v \succeq_w 0$ is then trivially satisfied.  
\end{remark}

\noindent
We continue this section by imposing a suitable extra condition on $v,$ such that $I_v \prec 1$ is equivalent to (1) - (3) from Theorem \ref{qoringcomp}.

\begin{definition} (see \cite[Ch. I, Definition 5]{kneb}) A valuation $v$ on $R$ is called \textbf{local}, if the pair $(R_v,I_v)$ is local, i.e. if $I_v$ is the unique maximal ideal of $R_v.$
\end{definition}

\noindent
The maximal ideal of a local ring consists precisely of all non-units of the said ring. A characterization of local valuations is given in \cite[Ch. I, Proposition 1.3]{kneb} and \cite[Proposition 5]{grif}, respectively. If $R$ is a field, then $v$ is always a local Manis valuation.

\begin{lemma} \label{Iv1} Let $(R,\preceq)$ be a quasi-ordered ring and $v$ a local Manis valuation on $R.$ The following are equivalent:
\begin{enumerate}
 \item $v$ is compatible with $\preceq.$ \vspace{1mm}

 \item $I_v \prec 1.$
\end{enumerate}
\begin{proof} 
(1) implies (2) is clear, see Remark \ref{qoringcompapp}(3). Now suppose that (2) holds, and assume for a contradiction that there are some $y,z \in R$ such that $0 \preceq y \preceq z,$ but $v(y) < v(z).$ The latter implies $y \notin \mathfrak{q}_v.$ Since $v$ is Manis, we find some $0 \preceq a$ such that $v(a) = -v(y).$ We obtain $0 = v(ay) < v(az),$ so $ay \in U_v$ and $az \in I_v.$ As $v$ is local and $ay \in U_v,$ $ay$ is a unit. It follows 
$$0 < v(az) - v(ay) = v\left(\frac{az}{ay}\right),$$ i.e. $\frac{az}{ay} \in I_v.$ Hence, (2) yields $\frac{az}{ay} \prec 1.$ This implies $az \prec ay$ 
(for if $az \sim ay,$ then $\frac{az}{ay} \sim 1$ by Corollary \ref{corpres}, a contradiction). On the other hand, it follows from $y \preceq z$ and $0 \preceq a$ that $ay \preceq az,$ a contradiction. 
\end{proof}
\end{lemma}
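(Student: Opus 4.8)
The direction (1) $\Rightarrow$ (2) is immediate: if $v$ is $\preceq$-compatible then $I_v$ is convex (Theorem \ref{qoringcomp}), and convexity of $I_v$ together with $0 \in I_v$ forces $I_v \prec 1$ — indeed $1 \preceq c$ for some $c \in I_v$ would put $1$ between $0$ and $c$, contradicting $1 \notin I_v$ (cf. Remark \ref{qoringcompapp}(3)). So the work is in (2) $\Rightarrow$ (1). The plan is to argue by contradiction: assume $I_v \prec 1$ but $v$ is not compatible, so there are $y,z \in R$ with $0 \preceq y \preceq z$ yet $v(y) < v(z)$. From $v(y) < v(z)$ we get $v(y) \neq \infty$, i.e. $y \notin \mathfrak{q}_v$; since $v$ is Manis we can pick $a \in R$ with $v(a) = -v(y)$, and by replacing $a$ with $-a$ if necessary (using $v(a) = v(-a)$) we may take $0 \preceq a$. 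Applying (QR3) to $0 \preceq y \preceq z$ gives $0 \preceq ay \preceq az$, with $v(ay) = 0$ and $v(az) = v(z) - v(y) > 0$, so $ay \in U_v$ and $az \in I_v$.

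Now the locality of $v$ enters: since $ay \in U_v$ and $v$ is local, $ay$ is a unit of $R$, so $az/ay$ makes sense and $v(az/ay) = v(az) - v(ay) > 0$, i.e. $az/ay \in I_v$. Hypothesis (2) then gives $az/ay \prec 1$. The next step is to transfer this strict inequality back to $az$ versus $ay$ by multiplying through by the non-negative unit $ay$: from $0 \preceq ay$ and $az/ay \preceq 1$, axiom (QR3) yields $az \preceq ay$; and equality $az \sim ay$ is excluded, because $az \sim ay$ would give $az/ay \sim 1$ by Corollary \ref{corpres} (multiplying both sides by $ay$ — or rather by noting $(az/ay)\cdot(ay) = az \sim ay = 1 \cdot ay$ and cancelling the positive unit $ay$ via (QR5)), contradicting $az/ay \prec 1$. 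Hence $az \prec ay$. On the other hand, $y \preceq z$ and $0 \preceq a$ give $ay \preceq az$ directly from (QR3). These two statements are contradictory, which completes the proof.

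The main obstacle, and the only place requiring genuine care, is the passage between "$az/ay \prec 1$" and "$az \prec ay$": one must be sure that multiplying the strict inequality by the non-negative unit $ay$ preserves strictness. Non-strict preservation is just (QR3); ruling out the collapse to $\sim$ is exactly where Corollary \ref{corpres} (that $\sim$ is preserved under multiplication, here by the unit $ay^{-1}$, equivalently cancellation by the positive element $ay$ via Lemma \ref{QR5} or (QR5)) is needed. Everything else — producing $a$ with the right value and correct sign, the arithmetic of values, and reading off $ay \in U_v$, $az \in I_v$ — is routine given that $v$ is Manis and local. Note that the hypothesis "local" is used only to guarantee $ay$ is a genuine unit of $R$, which is precisely what makes the quotient $az/ay$ available; this is the feature that distinguishes the present lemma from the general Manis case (Examples \ref{exp1} and \ref{exp2}), where $I_v \prec 1$ fails to imply compatibility.
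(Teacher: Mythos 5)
Your proposal is correct and follows essentially the same route as the paper: the same choice of $a$ with $v(a)=-v(y)$ and $0 \preceq a$, the use of locality to make $ay$ a unit so that $az/ay \in I_v$, and the same appeal to Corollary \ref{corpres} (plus (QR3)) to turn $az/ay \prec 1$ into $az \prec ay$, contradicting $ay \preceq az$. The only difference is that you spell out the (QR3)-multiplication step that the paper leaves implicit.
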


\begin{corollary} Let $v,w$ be non-trivial Manis valuations on $R$ such that $v$ is local. Then $v$ is coarser than $w$ if and only if $I_v \subseteq I_w.$
\begin{proof} This is an immediate consequence of Lemma \ref{coarse} and Lemma \ref{Iv1} in the case where $\preceq = \preceq_w$ for some non-trivial Manis valuation $w.$
\end{proof}

\end{corollary}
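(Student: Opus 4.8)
The plan is to reduce the statement to Lemma \ref{coarse} and Lemma \ref{Iv1} by equipping $R$ with the proper quasi-order $\preceq_w$ induced by $w$, i.e. $x \preceq_w y :\Leftrightarrow w(y) \leq w(x)$ (cf. Notation \ref{notation}(1)), so that $(R,\preceq_w)$ is a quasi-ordered ring. One direction is then immediate: if $v$ is a coarsening of $w$, then $I_v \subseteq I_w$ by the very Definition \ref{coarsening} of coarsening, so nothing is to be done there.

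For the converse, I would assume $I_v \subseteq I_w$ and first observe that this yields $I_v \prec_w 1$: for any $x \in I_v \subseteq I_w$ we have $w(x) > 0 = w(1)$, which by definition of $\preceq_w$ means exactly $x \prec_w 1$. Since $v$ is a local Manis valuation, Lemma \ref{Iv1}, applied to the quasi-ordered ring $(R,\preceq_w)$, upgrades the condition $I_v \prec_w 1$ to the statement that $v$ is $\preceq_w$-compatible, i.e. $w(y) \leq w(z) \Rightarrow v(y) \leq v(z)$ for all $y,z \in R$. Finally, $v$ and $w$ are both non-trivial Manis valuations, so Lemma \ref{coarse} tells us that $\preceq_w$-compatibility of $v$ is equivalent to $v$ being a coarsening of $w$; hence $v \leq w$, as desired.

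There is essentially no obstacle here once the right quasi-order is in place: the substance is already packaged in Lemmas \ref{coarse} and \ref{Iv1}, and the only point requiring an argument is the (one-line) passage from $I_v \subseteq I_w$ to $I_v \prec_w 1$. The one thing to be careful about is to invoke both lemmas with the quasi-order $\preceq_w$ attached precisely to the valuation $w$ occurring in the statement, so that the output of Lemma \ref{coarse} is the coarsening relation between $v$ and $w$ and not between $v$ and some other valuation.
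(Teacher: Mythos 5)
Your proposal is correct and is essentially the paper's own argument: the paper also obtains the corollary by specializing Lemma \ref{Iv1} and Lemma \ref{coarse} to the quasi-order $\preceq_w$, with the observation that $I_v \prec_w 1$ is exactly $I_v \subseteq I_w$ (your handling of the forward direction directly from Definition \ref{coarsening} is a harmless shortcut).
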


\noindent
We conclude this section by establishing a notion of rank of a quasi-ordered ring. For the sake of convenience we first consider the field case and then reduce the ring case to it.

\begin{definition} (compare \cite[Ch. I, Definition 2]{kneb}) Two valuations $v,w$ on $R$ are said to be \textbf{equivalent}, in short, $v \sim w,$ if $v(x) \leq v(y) \Leftrightarrow w(x) \leq w(y)$ for all $x,y \in R.$
\end{definition}

\noindent
Let $v,w$ be two non-trivial Manis valuations on $R.$ From Theorem \ref{coarser}(1) and Definition \ref{coarsening} follows
$$v \sim w \Leftrightarrow v \leq w \textrm{ and } w \leq v \Leftrightarrow R_v = R_w \textrm{ and } I_v = I_w.$$ By abuse of language, we identify equivalent valuations (this is quite common in the literature, see for instance \cite[p.11]{kneb} or \cite[p.256]{powers}).

\begin{proposition} \label{lemmarank} Let $(K,\preceq)$ be a quasi-ordered field. The set
$$\mathcal{R}:= \{w: w \textrm{ is a non-trivial } \preceq\textrm{-compatible valuation on } K\}$$ is totally ordered by $\leq$ (``coarser'')
\end{proposition}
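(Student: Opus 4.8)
The statement is that for a quasi-ordered field $(K,\preceq)$, the collection $\mathcal{R}$ of non-trivial $\preceq$-compatible valuations on $K$ is totally ordered by the coarsening relation $\leq$. Since a field $K$ is automatically Manis and local for every valuation, Theorem \ref{qoringcomp} applies without restriction, and by Remark \ref{qoringcompapp}(3) together with Lemma \ref{Iv1} the compatibility of a non-trivial valuation $v$ with $\preceq$ is equivalent to each of: $I_v$ convex, $R_v$ convex, and $I_v \prec 1$. The plan is to fix two valuations $v, w \in \mathcal{R}$ and show that either $v \leq w$ or $w \leq v$, i.e.\ (using Definition \ref{coarsening}) that either $R_w \subseteq R_v$ or $R_v \subseteq R_w$.

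\textbf{Key steps.} First I would record the translation: by Theorem \ref{qoringcomp}, both $R_v$ and $R_w$ are convex subsets of $K$ containing $1$ and symmetric under $x \mapsto -x$ (since $v(x) = v(-x)$), and moreover both are subrings of $K$. Then the core combinatorial fact to establish is: if $R_v$ and $R_w$ are two convex subrings of the quasi-ordered field $(K,\preceq)$ each containing $1$, then one contains the other. I would argue by contraposition: suppose neither inclusion holds, so pick $a \in R_v \setminus R_w$ and $b \in R_w \setminus R_v$. Because $b \notin R_v$ we have $v(b) < 0$, so $v(b^{-1}) > 0$, meaning $b^{-1} \in I_v \subseteq R_v$; similarly $a^{-1} \in I_w \subseteq R_w$. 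Now I want to produce an element that, by convexity, must lie in both rings but cannot. Using Lemma \ref{...} (convexity simplified on symmetric sets containing $0$) I only need to compare nonnegative elements: after replacing $a, b$ by $-a, -b$ if necessary (which changes nothing about membership), I may assume $0 \preceq a$ and $0 \preceq b$. The element $ab^{-1}$ satisfies $v(ab^{-1}) = v(a) - v(b) > 0$ hence $ab^{-1} \in I_v \subseteq R_v$; but $w(ab^{-1}) = w(a) - w(b) < 0$ (since $w(a) < 0 \leq w(b)$... here I must be careful about whether $w(b) \geq 0$ strictly, but $b \in R_w$ gives $w(b) \geq 0$ and $a \notin R_w$ gives $w(a) < 0$, so indeed $w(ab^{-1}) < 0$), hence $ab^{-1} \notin R_w$. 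This alone does not yet give a contradiction; to get one I would instead exhibit a single element squeezed between two elements of one ring but outside it. The cleanest route: from $0 \prec a$ and $0 \prec ab^{-1}$ with $v(ab^{-1}) > 0$, the element $c := ab^{-1}$ lies in $I_v$, so $c \prec 1$; symmetrically $d := a^{-1}b \in I_w$ so $d \prec 1$. Then $cd = 1$ but $c \prec 1$ and $d \prec 1$ force, via (QR3) and (QR5), a contradiction with $cd \sim 1$: indeed $0 \prec c \prec 1$ multiplied by $0 \prec d$ gives $cd \preceq d \prec 1$ by (QR3), contradicting $cd = 1 \sim 1$.

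\textbf{Main obstacle.} The delicate point is handling signs and the equivalence relation $\sim$: in a quasi-ordered field the axiom $x \sim 0 \Rightarrow x = 0$ holds (Remark \ref{rema}(4)), so the support is trivial and $\sim$ is genuine equality only on $E_0 = \{0\}$, but elsewhere $\sim$ is a nontrivial equivalence — in particular $r \sim -r$ may hold (this is exactly the ordered-vs-valued dichotomy). So when I ``replace $a$ by $-a$'' I must check that $0 \preceq a$ can always be arranged (it can, by totality, possibly at the cost of replacing $a$ by $-a$, which preserves $v$- and $w$-ring membership), and when I invoke (QR3) to get $cd \preceq d$ I need $0 \prec c$ and $0 \prec d$, which follow from $c, d \in I_v, I_w$ respectively together with $I_v \prec 1$, $I_w \prec 1$ and Lemma \ref{convexity} applied with the unit $u = 1$ — actually more simply: $c = ab^{-1}$ with $0 \preceq a$ and $v(b^{-1}) > 0 $, $b^{-1}$ a unit, so $0 \prec b^{-1}$ or $0 \succ b^{-1}$; picking the representative so that $0 \preceq b^{-1}$ (again allowed) gives $0 \preceq c$, and $c \neq 0$ gives $0 \prec c$. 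The remaining routine checks are: that $v, w \in \mathcal{R}$ really do have convex valuation rings (immediate from Theorem \ref{qoringcomp}), and that the coarsening order is antisymmetric enough that ``totally ordered'' is the correct conclusion — but since we identify equivalent valuations (as noted after the definition of $\sim$ for valuations), antisymmetry holds and $\leq$ is a genuine total order on $\mathcal{R}$.
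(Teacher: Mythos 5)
Your argument is correct in substance, but it reaches the key comparability statement by a genuinely different route than the paper. The paper's proof is very short: by Theorem \ref{qoringcomp} both $K_v$ and $K_w$ are convex subrings of $K$, and two convex, symmetric sets containing $0$ in a quasi-ordered field are automatically comparable by a one-line squeeze (given $a\in K_v\setminus K_w$ and $b\in K_w\setminus K_v$ with $0\preceq a,b$, totality puts one below the other and convexity forces it into the other ring); the inclusion of the valuation ideals then comes from locality of $K_v$ and $K_w$, and identifying equivalent valuations gives the total order. You instead run the classical ``incomparability traps $1$'' argument: from $a\in K_v\setminus K_w$, $b\in K_w\setminus K_v$ you form $c=ab^{-1}\in I_v$ and $d=c^{-1}=a^{-1}b\in I_w$, and use $I_v\prec 1$, $I_w\prec 1$ (available for fields by Lemma \ref{Iv1}, or via convexity of the ideals) together with (QR3) to get $1=cd\prec 1$. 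This works; the paper's route buys brevity and no multiplicative bookkeeping, while yours avoids the convexity-comparability observation and uses only the condition $I_v\prec 1$. Two points in your write-up deserve tightening. First, the sign adjustment is cleanest performed on $c$ and $d$ directly: if both are $\prec 0$, replace them by $-c\in I_v$ and $-d\in I_w$ (the product is still $1$), and mixed signs are impossible, since $c\prec 0\preceq d$ would give $1=cd\preceq 0$ by (QR3), contradicting (QR1); your variant of arranging $0\preceq b$ and $0\preceq b^{-1}$ simultaneously needs the additional (true, but not stated) fact that $0\prec x$ implies $0\prec x^{-1}$, which requires a short argument via Lemma \ref{lemmaxxx} and Corollary \ref{corpres} in the quasi-ordered setting. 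Second, you pass from ``$K_w\subseteq K_v$ or $K_v\subseteq K_w$'' to the coarsening relation of Definition \ref{coarsening} without comment; one still needs that $K_w\subseteq K_v$ forces $I_v\subseteq I_w$, which in a field is the one-line inverse argument (for $0\neq x\in I_v$, $x^{-1}\notin K_v\supseteq K_w$, hence $w(x)>0$) and is exactly where the paper invokes locality of the two valuation rings. With these two routine insertions your proof is complete.
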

\begin{proof} If $v,w \in \mathcal{R},$ Theorem \ref{qoringcomp}(2) yields that $K_v$ and $K_w$ are convex subrings of $K,$ without loss of generality $K_w \subseteq K_v.$ If equality holds, it is easy to verify that also $I_w = I_v,$ since both $K_v$ and $K_w$ are local. Thus, $v = w.$ If $K_w \subsetneq K_v,$ it follows again by the fact that $K_v$ and $K_w$ are local, that $I_v \subsetneq I_w,$ and therefore $v < w.$ 
\end{proof}

\begin{definition} The \textbf{rank of a quasi-ordered field} $(K,\preceq)$ is the order type of the totally ordered set $\mathcal{R}.$
\end{definition}

\noindent
For the following result we use that $(R,\preceq)$ is a quasi-ordered ring if and only if $(R/E_0,\preceq')$ is a quasi-ordered ring, where $\overline{x} \preceq' \overline{y} \Leftrightarrow x \preceq y$ (see \cite[Lemma 4.1]{sim}). Moreover, we exploit that the quasi-order $\preceq'$ uniquely extends to a quasi-order $\trianglelefteq$ on $K:=\mathrm{Quot}(R/E_0)$ via 
$$\frac{\overline{x}}{\overline{y}} \trianglelefteq \frac{\overline{a}}{\overline{b}} :\Leftrightarrow \overline{xyb^2} \preceq' \overline{aby^2}$$
(see \cite[Proposition 4.3]{sim}). If $v$ is a valuation on $R,$ let $v'$ denote the induced valuation on $R/E_0,$ and $\overline{v'}$ the extension of $v'$ to $K.$ We can now prove (see \cite[Lemma 4.1]{valente} for the same result in the ordered case):

\begin{lemma} \label{compequiv} Let $(R,\preceq)$ be a quasi-ordered ring and $v$ a valuation on $R$ with support $\mathfrak{q}_v = E_0.$ The following are equivalent:
\begin{enumerate}
\item $v$ is compatible with $\preceq.$ \vspace{1mm}

\item $v'$ is compatible with $\preceq'.$ \vspace{1mm}

\item $\overline{v'}$ is compatible with $\trianglelefteq.$
\end{enumerate}
\begin{proof} The equivalence of (1) and (2) easily follows from the definition of $\preceq'$ and $v'$, respectively. We conclude by showing that (2) and (3) are equivalent. It is clear that compatibility in $K$ reduces to compatibility in $R$ as it is a universal statement. For the contrary, let $0 \trianglelefteq \frac{x}{y} \trianglelefteq \frac{a}{b}.$ Then $0 \preceq xyb^2 \preceq aby^2.$ By compatibility with $v,$ we obtain that $v(aby^2) \leq v(xyb^2),$ i.e. that $v(ay) \leq v(xb).$ Thus, 
$\overline{v'}(\frac{a}{b}) \leq \overline{v'}(\frac{x}{y}).$
\end{proof}
\end{lemma}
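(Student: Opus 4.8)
The strategy is to prove the chain (1) $\Rightarrow$ (2) $\Rightarrow$ (3) $\Rightarrow$ (1), since (1) $\Rightarrow$ (2) $\Leftrightarrow$ (3) is already largely forced by the definitions of $\preceq'$ and $v'$: indeed, $\overline{x} \preceq' \overline{y}$ holds in $R/E_0$ exactly when $x \preceq y$ in $R$, and $v'(\overline{x}) := v(x)$ is well-defined precisely because $\mathfrak{q}_v = E_0$. Hence a compatibility inequality $0 \preceq y \preceq z \Rightarrow v(z) \le v(y)$ in $R$ translates verbatim into the corresponding statement for $v'$ and $\preceq'$, and conversely. So after disposing of (1) $\Leftrightarrow$ (2) in one line, the real content is the equivalence of (2) and (3), i.e. passing between $R/E_0$ and its quotient field $K = \operatorname{Quot}(R/E_0)$.

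For (3) $\Rightarrow$ (2), I would observe that compatibility of $\overline{v'}$ with $\trianglelefteq$ is a statement quantified over all elements of $K$, and specializing it to elements with denominator $\overline{1}$ (i.e.\ to the image of $R/E_0$ inside $K$) gives compatibility of $v'$ with $\preceq'$, since $\trianglelefteq$ restricts to $\preceq'$ on $R/E_0$ and $\overline{v'}$ restricts to $v'$. This direction is essentially immediate — compatibility is a ``universal'' (downward-inherited) condition.

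For the substantive direction (2) $\Rightarrow$ (3): assume $v$ (equivalently $v'$) is $\preceq$-compatible, and take $0 \trianglelefteq \frac{\overline{x}}{\overline{y}} \trianglelefteq \frac{\overline{a}}{\overline{b}}$ in $K$. Unwinding the definition of $\trianglelefteq$ from \cite[Proposition 4.3]{sim}, this means $0 \preceq' \overline{xyb^2} \preceq' \overline{aby^2}$, hence $0 \preceq xyb^2 \preceq aby^2$ in $R$. Applying compatibility of $v$ gives $v(aby^2) \le v(xyb^2)$, i.e.\ $v(a) + v(b) + 2v(y) \le v(x) + v(y) + 2v(b)$ by (V3), which rearranges to $v(a) + v(y) \le v(x) + v(b)$, that is $\overline{v'}(\frac{\overline{a}}{\overline{b}}) \le \overline{v'}(\frac{\overline{x}}{\overline{y}})$ using $\overline{v'}(\frac{\overline{p}}{\overline{q}}) = v(p) - v(q)$ (which is the well-defined extension of $v'$ to fractions). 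Thus $\overline{v'}$ is $\trianglelefteq$-compatible.

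\textbf{Main obstacle.} The only delicate point is making sure every arithmetic step is legitimate: one must check that $xyb^2, aby^2 \notin E_0$ whenever the fractions are nonzero (so that $v$ takes finite values there and the cancellation $v(\cdot) + v(\cdot)$ manipulations are valid), using that $E_0 = \mathfrak{q}_v$ is prime and that $\overline y, \overline b \ne \overline 0$; and one must confirm that $\overline{v'}$ really is the extension with $\overline{v'}(\overline p/\overline q) = v(p) - v(q)$, which is standard for valuations extended to a quotient field but should be cited or recalled. Beyond this bookkeeping, the proof is short; the conceptual work has all been done in setting up $\preceq'$, $\trianglelefteq$ and $\overline{v'}$.
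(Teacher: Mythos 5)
Your proposal is correct and follows essentially the same route as the paper: the (1) $\Leftrightarrow$ (2) equivalence read off from the definitions, the observation that (3) $\Rightarrow$ (2) is immediate because compatibility is a universal statement inherited by the subring, and the same computation $0 \preceq xyb^2 \preceq aby^2 \Rightarrow v(aby^2) \le v(xyb^2) \Rightarrow v(ay) \le v(xb)$ for (2) $\Rightarrow$ (3). Your extra remarks about checking $xyb^2, aby^2 \notin E_0 = \mathfrak{q}_v$ and the formula $\overline{v'}(\overline{p}/\overline{q}) = v(p)-v(q)$ are just the bookkeeping the paper leaves implicit.
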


\noindent
The equivalence of (1) and (3) from the previous lemma justifies to define:

\begin{definition} \label{qoringrank} The \textbf{rank of a quasi-ordered ring} $(R,\preceq)$ is the rank of the naturally associated quasi-ordered field $(\mathrm{Quot}(R/E_0),\trianglelefteq).$
\end{definition}


\noindent
For a further discussion on the rank of a quasi-ordered field we refer to \cite[p.403]{kuhl}. 

\section{The Baer-Krull Theorem}

\noindent
In the previous section we fixed a quasi-ordered ring $(R,\preceq)$ and characterized all the (local) Manis valuations on $R,$ that are compatible with $\preceq$ (see Theorem \ref{qoringcomp} and Lemma \ref{Iv1}). It is natural to ask what happens the other way round, i.e. if we fix a valued ring $(R,v)$ with $v$ Manis, can we describe all the quasi-orders on $R$ that are compatible with $v$? A positive answer is given by the Baer-Krull Theorem (see Theorem \ref{BaerKrullI}, respectively Corollary \ref{BaerKrullII}, Theorem \ref{BaerKrullIII}, Theorem \ref{BaerKrullIV}). Recall that if $\preceq$ is a $v$-compatible quasi-order on $R,$ then it gives rise to a quasi-order $\preceq'$ on the residue class domain $Rv := R_v/I_v.$ The said theorem establishes a connection between the $v$-compatible quasi-orders on $R$ with support $\textrm{supp}(v),$ and the quasi-orders on $Rv$ with support $\{0\}.$ After establishing the Baer-Krull Theorem for quasi-ordered rings, we deduce a version for ordered, respectively proper quasi-ordered, rings (see Corollary \ref{BaerKrullorings}, respectively Corollary \ref{BaerKrullqoringsII}). The first one yields a generalization of the classical Baer-Krull Theorem (see Theorem \ref{baerkrullfield}), while the latter gives rise to a characterization of all Manis valuations on $R$ that are finer than $v.$
\vspace{1mm}

\noindent
When one deals with quasi-ordered rings, this theorem becomes more complicated than in the ordered field case (see Theorem \ref{baerkrullfield}). Note that the map 
$\eta$ there is completely determined by the signs of the elements $\pi_i.$ If the quasi-order is an order, then all $\eta \in \{-1,1\}^I$ are realizeable and one gets a bijective correspondence as in Theorem \ref{baerkrullfield}. However, if the quasi-order is induced by some valuation, then all elements are non-negative, so the only $\eta$ possible is the constant map $\eta = 1.$ Therefore, when we consider quasi-ordered rings $(R,\preceq)$, the best we can hope for is that $\psi$ is an injective map such that the image of $\psi$ contains all possible tuples $(\eta_{\preceq},\preceq')$ as just described. Establishing such a result will be the aim for the rest of this paper. 

\begin{notation} \label{notation2} We use the following notation for the rest of this section:
\begin{enumerate}
 \item Let $R$ be a commutative ring with $1$ and $v \colon R \to \Gamma_v \cup \{\infty\}$ a Manis valuation on $R$ with support $\mathfrak{q}_v,$ valuation ring $R_v,$ valuation ideal $I_v,$ and residue class domain $Rv := R_v/I_v,$ just as in Notation \ref{notation}. Moreover, we define 
$\tilde{R}:= R \backslash \mathfrak{q}_v = v^{-1}(\Gamma_v).$ \vspace{1mm}

 \item We fix some $\mathbb{F}_2$-basis $\{\overline{\gamma_i}: i \in I\}$ of $\overline{\Gamma_v} = \Gamma_v/2\Gamma_v,$ and let 
$\{\pi_i: i \in I\} \subseteq \tilde{R}$ be such that $v(\pi_i) = \gamma_i.$ \vspace{1mm}

 \item Given a $v$-compatible quasi-order on $R,$ we denote by $\preceq'$ the induced quasi-order on $Rv$ (see Theorem \ref{qoringcomp}(3)). By $\eta_{\preceq}$ we denote the map $I \to \{-1,1\}$ defined by $\eta_{\preceq}(i) = 1$ if and only if $0 \preceq \pi_i.$
\end{enumerate}
\end{notation}

\noindent
Now we fix some tuple $(\eta^*,\preceq^*)$ from the disjoint union
\[
 \{-1,1\}^I \times \{\textrm{orders on} \, Rv \, \textrm{with support} \, \{0\}\} \sqcup \{1\}^I \times \{\textrm{p.q.o. on} \, Rv \, \textrm{with support} \, \{0\}\}
\]

\noindent
The main part of the proof of the Baer-Krull Theorem is to construct a quasi-order on $R$ that is mapped to $(\eta^*, \preceq^*)$ under the analogue of the map $\psi$ from Theorem \ref{baerkrullfield}. For that purpose we define a binary relation $\preceq$ on $R$ as a function of $\preceq^*$ and $\eta^*$ as follows: 
If $x,y \in \mathfrak{q}_v,$ declare $x \preceq y.$ Otherwise, if $x \in \tilde{R}$ or $y \in \tilde{R},$ consider 
$$\gamma := \gamma_{x,y} := \max\{-v(x),-v(y)\} \in \Gamma_v.$$ 
Write $\overline{\gamma} = \sum_{i \in I_{x,y}} \overline{\gamma_i}.$ Then $\gamma = \sum_{i \in I_{x,y}} \gamma_i + 2v(a_{x,y})$ for some $a_{x,y} \in \tilde{R},$ which is uniquely determined up to its value (i.e. instead of $a_{x,y}$ one may have chosen any other element $b_{x,y} \in R$ with $v(b_{x,y}) = v(a_{x,y})$). In what follows, we will just write $\sum_i, \prod_i$ and $a$ instead of $\sum_{i \in I_{x,y}}, \prod_{i \in I_{x,y}}$ and $a_{x,y},$ respectively, whenever $x$ and $y$ are clear from the context.

\begin{lemma} \label{residue} Let $x,y \in \tilde{R}.$ With the notation above, $x\prod_i \pi_ia^2,$ $y \prod_i \pi_i a^2 \in R_v.$ Moreover, $\overline{x\prod_i \pi_i a^2} = 0$ if and only if $v(x) > v(y).$
\begin{proof} Note that 
\begin{align*}
v\left(x\prod_i\pi_ia^2\right) &= v(x) + \sum\limits_i v(\pi_i) + 2v(a) = v(x) + \gamma \\ &= v(x) + \max\{-v(x),-v(y)\} \geq 0,
\end{align*}

\noindent
and likewise for $y\prod_i\pi_ia^2,$ so both are in $R_v.$ Moreover,
\begin{align*}
\overline{x\prod_i \pi_i a^2} = 0 \Leftrightarrow v(x) + \max\{-v(x),-v(y)\} > 0 \Leftrightarrow v(x)>v(y). 
\end{align*}
\end{proof}
\end{lemma}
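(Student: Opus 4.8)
The plan is to compute the value $v\left(x\prod_i \pi_i a^2\right)$ directly from the valuation axioms (V3) and the definitions set up just before the lemma, and then to translate the condition $\overline{x\prod_i \pi_i a^2} = 0$ into the inequality $v\left(x\prod_i \pi_i a^2\right) > 0$ using the definition of the valuation ideal $I_v$. Since everything in sight lies in $\tilde R = v^{-1}(\Gamma_v)$, all the valuations involved are genuine elements of $\Gamma_v$ (not $\infty$), so the additivity (V3) applies without any case distinction, and the arithmetic is straightforward.

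First I would expand, using (V3), $v\left(x\prod_i \pi_i a^2\right) = v(x) + \sum_i v(\pi_i) + 2 v(a)$. By the choice of the $\pi_i$ in Notation \ref{notation2}(2) we have $v(\pi_i) = \gamma_i$, and by the definition of $a = a_{x,y}$ we have $\gamma = \sum_i \gamma_i + 2 v(a)$, so $\sum_i v(\pi_i) + 2 v(a) = \gamma$. Hence $v\left(x\prod_i \pi_i a^2\right) = v(x) + \gamma = v(x) + \max\{-v(x), -v(y)\}$. Since $\max\{-v(x),-v(y)\} \geq -v(x)$, this quantity is $\geq 0$, so $x\prod_i \pi_i a^2 \in R_v$; the identical computation with $y$ in place of $x$ gives $y\prod_i \pi_i a^2 \in R_v$. (Here I am implicitly using that $x, y \in \tilde R$ guarantees $v(x), v(y) \in \Gamma_v$, so the additions and the use of $\max$ are legitimate in the ordered abelian group $\Gamma_v$.)

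For the second assertion, recall $\overline{z} = 0$ in $Rv = R_v/I_v$ means precisely $z \in I_v$, i.e. $v(z) > 0$. Applying this with $z = x\prod_i \pi_i a^2$ and the value computed above, $\overline{x\prod_i \pi_i a^2} = 0$ is equivalent to $v(x) + \max\{-v(x),-v(y)\} > 0$. Now a short case analysis on whether $\max\{-v(x),-v(y)\}$ equals $-v(x)$ or $-v(y)$: if $v(x) \leq v(y)$ then $-v(x) \geq -v(y)$, so the max is $-v(x)$ and the left side is $0$, which is not $> 0$; if $v(x) > v(y)$ then the max is $-v(y)$ and the left side is $v(x) - v(y) > 0$. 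Thus $\overline{x\prod_i \pi_i a^2} = 0 \Leftrightarrow v(x) > v(y)$, as claimed.

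This lemma is essentially a bookkeeping computation, so there is no real obstacle; the only point requiring a little care is making sure that the element $a_{x,y}$, which is only determined up to its value, does not affect anything — but since the whole statement depends on $x\prod_i\pi_ia^2$ only through its residue, and the residue computation only uses $v(a)$, the choice of representative is irrelevant. I would also note that one should cite Lemma \ref{valmin} nowhere here (it is not needed), and that the claim ``$v(\pi_i) = \gamma_i$'' together with ``$\gamma = \sum_i \gamma_i + 2v(a)$'' are exactly the defining relations fixed before the lemma, so no external input beyond (V3) and the definition of $I_v$ is used.
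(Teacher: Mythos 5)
Your proposal is correct and follows essentially the same route as the paper: compute $v\bigl(x\prod_i\pi_ia^2\bigr)=v(x)+\gamma=v(x)+\max\{-v(x),-v(y)\}\geq 0$ via (V3) and the defining relation for $a_{x,y}$, then translate $\overline{x\prod_i\pi_ia^2}=0$ into $v(x)+\max\{-v(x),-v(y)\}>0$, which holds exactly when $v(x)>v(y)$. The extra care you take about the choice of $a$ and the explicit case split on the maximum are fine but not needed beyond what the paper's one-line computation already contains.
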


\noindent
Particularly, we can take residues of both $x \prod_i \pi_ia^2$ and $y \prod_i \pi_i a^2.$ The moreover part of the statement will be of great importance in the proof of Main Lemma \ref{mainlemma}. For the latter, we also require the following two lemmas, which extend the statements from axiom (QR3), respectively (QR5). 

\begin{lemma} \label{QR3-} Let $(R,\preceq)$ be a quasi-ordered ring. If $x \preceq y$ and $z \preceq 0,$ then $yz \preceq xz.$
\begin{proof} As $E_0$ is an ideal, we may without loss of generality assume that $z \nsim 0.$ Moreover, note that if $y \sim 0,$ then $x,z \preceq 0,$ thus $0 \preceq -x,-z.$ It follows via (QR3) that $yz \sim 0 \preceq xz.$ So we may also assume that $y \notin E_0.$ From $x \preceq y$ and $z \preceq 0$ follows 
$-xz \preceq -yz.$ We claim that $yz \nsim -yz.$ Once this is shown, it follows from $-xz \preceq -yz$ that $yz-xz \preceq 0.$ The latter implies $yz \preceq xz.$ Indeed, either $x \nsim 0$ and therefore $xz \nsim 0$ ($E_0$ is a prime ideal), so that we can apply (QR4); or $x \sim 0,$ i.e. $xz \sim 0,$ and therefore 
$yz - xz \sim yz \preceq 0 \sim xz$ (see Remark \ref{rema}(1)). So assume for a contradiction that $yz \sim -yz.$ Lemma \ref{lemmaxxx} yields $0 \preceq yz, -yz.$ As $y \notin E_0,$ either $0 \prec y$ or $0 \prec -y.$ So via (QR5) it follows either from $0 \preceq yz$ (if $0 \prec y$) or from $0 \preceq -yz$ (if $0 \prec -y$) that $0 \preceq z.$ Hence $z \sim 0,$ a contradiction. 
\end{proof}
\end{lemma}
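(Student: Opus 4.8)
The plan is to imitate the proof of axiom (QR3) with the roles of $xz$ and $yz$ interchanged, after first disposing of the cases where $z$ or $y$ lies in the support. Since $E_0$ is a prime ideal (Remark \ref{rema}(2)), if $z \sim 0$ then $xz, yz \in E_0$, hence $yz \sim xz$ and we are done; so we may assume $z \nsim 0$. Next, if $y \sim 0$, then $x \preceq y$ forces $x \preceq 0$; from $x \preceq 0$ and $z \preceq 0$ one extracts $0 \preceq -x$ and $0 \preceq -z$ by (QR4) (using $-z \nsim 0$, and treating the case $-x \sim 0$ trivially), and (QR3) applied to $0 \preceq -x$, $0 \preceq -z$ gives $0 \preceq (-x)(-z) = xz$, while $yz \sim 0$, so $yz \preceq 0 \preceq xz$. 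Hence we may also assume $y \nsim 0$.

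In the remaining case $y, z \nsim 0$: from $z \preceq 0$ and $z \nsim 0$, axiom (QR4) yields $0 \prec -z$, so (QR3) applied to $x \preceq y$ gives $-xz = x(-z) \preceq y(-z) = -yz$. The crucial step is the claim $yz \nsim -yz$. To prove it, suppose for contradiction that $yz \sim -yz$; then Lemma \ref{lemmaxxx} gives $0 \preceq yz$ and $0 \preceq -yz$. Since $y \nsim 0$, totality forces either $0 \prec y$ or $0 \prec -y$. In the first subcase, (QR5) applied to $0 = y\cdot 0 \preceq y\cdot z$ yields $0 \preceq z$; in the second, (QR5) applied to $0 = (-y)\cdot 0 \preceq (-y)\cdot z$ yields $0 \preceq z$. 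Either way, together with $z \preceq 0$ this gives $z \sim 0$, a contradiction.

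Finally, knowing $yz \nsim -yz$, I would add $yz$ to the inequality $-xz \preceq -yz$ via (QR4), whose hypothesis $z' \nsim -yz$ with $z' = yz$ is now available, obtaining $yz - xz \preceq 0$; it remains to add $xz$. If $x \nsim 0$, then $xz \nsim 0$ since $E_0$ is prime and $z \nsim 0$, so (QR4) applies again and gives $yz \preceq xz$. If $x \sim 0$, then $xz \sim 0$, while $yz \nsim 0$, so Remark \ref{rema}(1) gives $yz - xz \sim yz$, whence $yz \preceq 0 \preceq xz$. I expect the main obstacle to be the claim $yz \nsim -yz$: it is precisely here that the ``new'' axiom (QR5) is indispensable, together with the observation that $y \nsim 0$ forces $y$ or $-y$ to be strictly positive. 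A secondary point requiring care is that every application of (QR4) needs its $\nsim$-hypothesis checked, which is why the preliminary reductions to $z \nsim 0$ and $y \nsim 0$, and the final case split on whether $x \sim 0$, are necessary.
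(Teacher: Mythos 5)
Your proof is correct and follows essentially the same route as the paper: the same preliminary reductions to $z \nsim 0$ and $y \nsim 0$, the same key claim $yz \nsim -yz$ established via Lemma \ref{lemmaxxx} and (QR5), and the same final case split on whether $x \sim 0$ when adding $xz$. You have merely filled in a few details (e.g.\ the explicit derivation of $0 \prec -z$ and the (QR4)-hypothesis checks) that the paper leaves implicit.
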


\begin{lemma} \label{QR5-} Let $(R,\preceq)$ be a quasi-ordered ring and $x,y,z \in R.$ If $xz \preceq yz$ and $z \prec 0,$ then $y \preceq x.$
\begin{proof} Assume for a contradiction $x \prec y.$ The previous lemma yields $yz \preceq xz.$ Hence $xz \sim yz.$ Lemma \ref{QR5} yields $x \sim y,$ a contradiction.
\end{proof}
\end{lemma}

\begin{mainlemma} \label{mainlemma}
With the notation from above, define for $x \in \tilde{R}$ or $y \in \tilde{R}$ that

 $$x \preceq y :\Leftrightarrow \left\{ \begin{array}{c c c c} \textrm{Either} & \overline{x\prod_i \pi_i a^2} \preceq^* \overline{y \prod_i \pi_i a^2} & \textrm{and} & \prod_i \eta^*(i) =1 \\
							    & & & \\
							    \textrm{or} & \overline{y\prod_i\pi_ia^2} \preceq^* \overline{x\prod_i\pi_i a^2} & \textrm{and} & \prod_i \eta^*(i) = -1.
                                      
                                     \end{array} \right.$$ \vspace{1mm}
																		
\noindent
Moreover, declare $x \preceq y$ for $x,y \in \mathfrak{q}_v.$ Then $\preceq$ defines a quasi-order on $R$ with support $E_0 = \mathfrak{q}_v.$
\end{mainlemma}
\begin{proof} The proof of the Main Lemma is extensive, however, the methods are widely the same. Notably, the moreover part from Lemma \ref{residue} is frequently exploited. We always use the notation from above. For the sake of convenience and uniformity, we treat $\preceq^*$ and $\eta^*$ as an arbitrary quasi-order on $Rv$ with support $\{0\},$ respectively an arbitrary map from $I$ to $\{-1,1\},$ for as long as possible. In fact, the distinction whether $\preceq^*$ is an order or induced by a valuation (in which case the map $\eta^*$ is trivial) is only necessary at some points when we verify axiom (QR4). \vspace{1mm}

\noindent
First of all we show that $\preceq$ is well-defined. Recall that $a \in \tilde{R}$ was only determined up to its value. So let $b \in \tilde{R}$ with $v(a) = v(b),$ and suppose that $$\overline{x\prod_i \pi_i a^2} \preceq^* \overline{y \prod_i\pi_ia^2}.$$ As $v$ is Manis, there exists some $z \in \tilde{R}$ with $v(z) = -v(b),$ so $v(bz) = 0,$ i.e. $\overline{bz} \neq 0.$ Particularly, $0 \prec^* \overline{bz}^2.$ With axiom (QR3) follows, after rearranging, that
$$\overline{x \prod_i\pi_i b^2} \ \overline{az}^2 \preceq^* \overline{y \prod_i\pi_i b^2} \ \overline{az}^2.$$ We conclude by eliminating $0 \prec^* \overline{az}^2$ via axiom (QR5).  \vspace{1mm}

\noindent
Clearly, $\preceq$ is reflexive and total. At next we prove transitivity. So let $x \preceq y$ and $y \preceq z,$ without loss of generality $x \in \tilde{R}$ or $z \in \tilde{R}.$ The proof is done by distinguishing four cases. First of all assume that $v(p) = v(q) \leq v(r)$ with $p,q,r \in \{x,y,z\}$ pairwise distinct. Then $\gamma_{x,y} = \gamma_{x,z} = \gamma_{y,z} \in \Gamma_v$ all coincide, so $I_{x,y} = I_{x,z} = I_{y,z}$ and $a_{x,y} = a_{x,z} = a_{y,z}.$ Hence, transitivity of $\preceq$ follows immediately by transitivity of $\preceq^*.$ It remains to verify the cases where there is a unique smallest element among $v(x),v(y)$ and $v(z).$ First suppose that $v(x)<v(y),v(z).$ Then $\gamma_{x,y} = -v(x) = \gamma_{x,z},$ i.e. $I_{x,y} = I_{x,z}$ and $a_{x,y} = a_{x,z}.$ We do the case $\prod_{i \in I_{x,y}} \eta^*(i) = -1,$ the case $\prod_{i \in I_{x,y}} \eta^*(i) = 1$ is proven likewise. From $x \preceq y$ and $v(x) < v(y)$ then follows 
$$\overline{y\prod_{i \in I_{x,y}} \pi_i a_{x,y}^2} = 0 \preceq^* \overline{x \prod_{i \in I_{x,y}} \pi_i a_{x,y}^2}$$ (see Lemma \ref{residue}). Now $v(x) < v(z)$ and Lemma \ref{residue} imply that $$\overline{z \prod_{i \in I_{x,y}} \pi_i a_{x,y}^2} = 0.$$ Therefore, $x \preceq z.$ 
Next, suppose that $v(y) < v(x),v(z).$ Then $\gamma_{x,y} = -v(y) = \gamma_{y,z},$ i.e. $I_{x,y} = I_{y,z}$ and 
$a_{x,y}=a_{y,z}.$ Again, we only do the case $\prod_{i \in I_{x,y}} \eta^*(i) = -1.$ From $v(y) < v(x)$ and $x \preceq y$ follows 
$$\overline{y\prod_{i \in I_{x,y}}\pi_ia_{x,y}^2} \preceq^* \overline{x\prod_{i \in I_{x,y}}\pi_ia_{x,y}^2} = 0.$$ Likewise, $v(y) < v(z)$ and $y \preceq z$ implies $$0 \preceq^* \overline{y\prod_{i \in I_{x,y}}\pi_ia_{x,y}^2}.$$ Since the support of $\preceq^*$ is trivial, it follows $$\overline{y\prod_{i \in I_{x,y}}\pi_ia_{x,y}^2} = 0.$$ On the other hand, $v(y) < v(x),v(z)$ yields via Lemma \ref{residue} that $$\overline{y\prod_{i \in I_{x,y}}\pi_ia_{x,y}^2} \neq 0,$$ a contradiction. The case $v(z) < v(x),v(y)$ is proven like the case where $v(x)$ is the unique smallest value. \vspace{1mm}

\noindent
Next, we establish that the support of $\preceq$ is $\mathfrak{q}_v.$ Assume there is some $x \in E_0$ with $x \notin \mathfrak{q}_v.$ Then $\overline{x\prod_i\pi_ia^2} \sim 0.$ As the support of $\preceq^*$ is $\{0\},$ this yields $x \prod_i\pi_ia^2 \in I_v.$  However, as $v(x) < v(0) = \infty,$ this contradicts Lemma \ref{residue}. We obtain that $E_0 \subseteq \mathfrak{q}_v.$ The other implication follows immediately from the definition of $\preceq.$ \vspace{1mm}

\noindent
It remains to verify the axioms (QR1) - (QR5) and compatibility with $v.$ For the proof of \textbf{(QR1)} assume for a contradiction that $1 \preceq 0.$ Note that $\gamma_{0,1} = 0,$ so $I = \emptyset,$ and $\prod_i\eta^*(i) = 1.$ It follows from $1 \preceq 0$ that $\overline{a}^2 \preceq^* 0$ for some $a \in R$ with $v(a) = 0,$ i.e. $\overline{a} \neq 0.$ This contradicts the facts that squares are non-negative and that the support of $\preceq^*$ is trivial. \vspace{1mm}

\noindent
For \textbf{(QR2)} is nothing to show by Remark \ref{rema}(3). Next, we verify \textbf{(QR3)}, i.e. we show that $x \preceq y$ and $0 \preceq z$ implies 
$xz \preceq yz.$ By the definition of $\preceq$ and the fact that $\mathfrak{q}_v$ is an ideal, we may without loss of generality assume that 
$z \notin \mathfrak{q}_v,$ and that not both $x$ and $y$ are in $\mathfrak{q}_v.$ Further note that
\begin{align*}
 \gamma_{xz,yz} &= \max\{-v(xz),-v(yz)\} = \max\{-v(z),-v(0)\} + \max\{-v(x),-v(y)\} \\ &= \gamma_{0,z} + \gamma_{x,y} \in \Gamma_v.
\end{align*}
Hence, $I_{xz,yz}$ is the (without loss of generality) disjoint union of $I_{x,y}$ and $I_{0,z},$ which implies $$\prod_{i \in I_{xz,yz}}\eta^*(i) = \prod_{i \in I_{x,y}} \eta*(i) \cdot \prod_{i \in I_{0,z}} \eta^*(i).$$ Moreover, $a_{xz,yz} = a_{x,y}a_{0,z}.$ \vspace{1mm}

\noindent
First consider the case $\prod_{i \in I_{0,z}}\eta^*(i) = 1.$ This yields $0 \preceq^* \overline{z\prod_{i \in I_{0,z}}\pi_ia_{0,z}^2}.$ Now let 
$\prod_{i \in I_{x,y}}\eta^*(i) = -1 = \prod_{i \in I_{xz,yz}}\eta^*(i).$ Then $$\overline{y\prod_{i \in I_{x,y}}\pi_ia_{x,y}^2} \preceq^* \overline{x \prod_{i \in I_{x,y}} \pi_ia_{x,y}^2}.$$ Applying (QR3) yields $$\overline{yz\prod_{i \in I_{xz,yz}}\pi_ia_{x,y}^2a_{0,z}^2} \preceq^* \overline{xz\prod_{i \in I_{xz,yz}}\pi_ia_{x,y}^2a_{0,z}^2}.$$ Hence, $xz \preceq yz.$ The case $\prod_{i \in I_{x,y}} \eta^*(i) = 1 = \prod_{i \in I_{xz,yz}}\eta^*(i)$ is proven analogously. The proof for 
$\prod_{i \in I_{0,z}}\eta^*(i) = -1$ is also almost the same; we just apply Lemma \ref{QR3-} instead of axiom (QR3). \vspace{1mm}

\noindent
The proof of axiom \textbf{(QR4)} is divided into five subcases. First suppose that $v(x) < v(z)$ or $v(y) < v(z).$ Either way, $\gamma_{x,y} = \gamma_{x+z,y+z}.$ Moreover, in both cases $\overline{z\prod_{i \in I_{x,y}}\pi_ia_{x,y}^2} = 0.$ From this observation, the claim follows immediately. Further note that if $\preceq^*$ is an order and $x \prec y,$ we obtain $x+z \prec y+z,$ because orders preserve strict inequalities under addition. We will exploit this fact below. \vspace{1mm}

\noindent  
If $v(z)<v(x),v(y),$ then $\gamma_{y,z} = \gamma_{x+z,y+z}.$ Note that $$\overline{x\prod_{i \in I_{y,z}}\pi_ia_{y,z}^2} = 0 = \overline{y\prod_{i \in I_{y,z}}\pi_ia_{y,z}^2}$$ by Lemma \ref{residue}. It is easy to see that then $x+z \preceq y+z.$ Suppose now that $v(x) = v(z) < v(y).$ Then $\gamma_{x,y} = \gamma_{y,z} = \gamma_{x+z,y+z}.$ From $v(x)<v(y)$ follows that $\overline{y\prod_{i \in I_{x,y}}\pi_ia_{x,y}^2} = 0.$ We distinguish two subcases. 
If $\prod_{i \in I_{x,y}}\eta^*(i) = 1,$ then $x \preceq y$ yields $$\overline{x\prod_{i \in I_{x,y}}\pi_ia_{x,y}^2} \preceq^* \overline{y\prod_{i \in I_{x,y}}\pi_ia_{x,y}^2} = 0.$$ As $z \nsim y,$ one may add $\overline{z\prod_{i \in I_{x,y}}\pi_ia_{x,y}^2}$ on both sides. This concludes the present subcase. On the other hand, if $\prod_{i \in I_{x,y}}\eta^*(i) = -1,$ then $\eta^* \neq 1,$ so $\preceq^*$ is an order. Thus, one may add $\overline{z\prod_{i \in I_{x,y}}\pi_ia_{x,y}^2}$ on both sides as well. Next, suppose that $v(y) = v(z) < v(x).$ Then $\gamma_{x,y} = \gamma_{x+z,y+z}.$ It holds $\overline{x\prod_{i \in I_{x,y}}\pi_ia_{x,y}^2} = 0.$ If $\prod_{i \in I_{x,y}}\eta^*(i) = 1,$ then $x \preceq y$ yields $0 \preceq^* \overline{y\prod_{i \in I_{x,y}}\pi_ia_{x,y}^2},$ and one may conclude by adding $\overline{z\prod_{i \in I_{x,y}}\pi_ia_{x,y}^2}$ on both sides. Analogously, if $\prod_{i \in I_{x,y}}\eta^*(i) = -1,$ one may also add $\overline{z\prod_{i \in I_{x,y}}\pi_ia_{x,y}^2}$ on both sides, as the support of $\preceq^*$ is zero. Finally, suppose that $v(x) = v(y) = v(z).$ If they are all in $\mathfrak{q}_v,$ then also $x+z,y+z \in \mathfrak{q}_v,$ and therefore $x+z \preceq y+z.$ So suppose that $v(x) = v(y) = v(z) \in \Gamma_v.$ It holds
\[
 \gamma_{x+z,y+z} = \max\{-v(x+z),-v(y+z)\} \leq -v(z).
\]
First suppose that equality holds. Then $\max\{-v(x+z),-v(y+z)\} = -v(z),$ i.e. all $\gamma's$ coincide. If $\prod_i\eta^*(i) = 1,$ the claim follows immediately from (QR4) and the fact that $y \nsim z$ by simply adding $\overline{z\prod_i\pi_ia^2}$ to both sides of the inequality $\overline{x\prod_i\pi_ia^2} \preceq^* \overline{y\prod_i\pi_ia^2}.$ If $\prod_i\eta^*(i) = -1,$ then $\preceq^*$ must be an order and we may simply add $\overline{z\prod_i\pi_ia^2}$ on both sides anyway. \vspace{1mm}

\noindent
Last but not least assume that $<$ holds, i.e. $\max\{-v(x+z),-v(y+z)\} < -v(z).$ Then $v(z) < (x+z),v(y+z).$ By Lemma 4.2, $\overline{p\prod_{i \in I_{x,y}}\pi_ia_{x,y}^2} \neq 0$ for $p \in \{x,y,z\},$ whereas
\[
 \overline{(x+z)\prod_{i \in I_{x,y}}\pi_ia_{x,y}^2} = 0 = \overline{(y+z)\prod_{i \in I_{x,y}}\pi_ia_{x,y}^2}.
\]
Therefore, $$\overline{x\prod_{i \in I_{x,y}}\pi_ia_{x,y}^2} = \overline{y\prod_{i \in I_{x,y}}\pi_ia_{x,y}^2} = -\overline{z\prod_{i \in I_{x,y}}\pi_ia_{x,y}^2}.$$ Particularly, we may assume that $\preceq^*$ is an order, since in the proper quasi-ordered case 
$$\overline{y\prod_{i \in I_{x,y}}\pi_ia_{x,y}^2} \sim -\overline{y\prod_{i \in I_{x,y}}\pi_ia_{x,y}^2} = \overline{z\prod_{i \in I_{x,y}}\pi_ia_{x,y}^2},$$ contradicting the assumption $y \nsim z.$ We claim that
$x+z \sim 0 \sim y+c,$ which clearly implies $x+z \preceq y+z.$ Assume for a contradiction that $x+z \nsim 0.$ If $x+z \prec 0,$ it follows from the case ``$v(x)<v(z)$'' (where $x+z$ plays the role of $x,$ $0$ the one of $y$ and
$-z$ the one of $z;$ recall that $v(x+z) < v(z)$) above and the fact that $\preceq^*$ is an order, that $x \prec -z,$ contradicting $x \sim -z.$ Likewise, if $0 \prec x+z,$ it follows from the case ``$v(y)<v(z)$'' that $-z \prec x,$
again a contradiction. Therefore $x+z \sim 0.$ The same reasoning shows that $y+z \sim 0$ as well. \vspace{1mm}

\noindent
Finally, we prove axiom \textbf{(QR5)}. Suppose that $xz \preceq yz$ and $0 \prec z.$ Clearly $z \in \tilde{R},$ as $0 \nsim z.$ Moreover, let without loss of generality $x \in \tilde{R}$ or $y \in \tilde{R}.$ Note that $\gamma_{xz,yz} = \gamma_{x,y} + \gamma_{z,0},$ and 
$\prod_{i \in I_{xz,yz}}\eta^*(i) = \prod_{i \in I_{x,y}}\eta^*(i)\prod_{i \in I_{0,z}}\eta^*(i),$ and $a_{xz,yz} = a_{0,z}a_{x,y},$ as in the proof of (QR3) above. \vspace{1mm}

\noindent
First let $\prod_{i \in I_{0,z}} \eta^*(i) = 1.$ Then $0 \prec^* \overline{z\prod_{i \in I_{0,z}}\pi_ia_{0,z}^2}.$ If $\prod_{i \in I_{xz,yz}}\eta^*(i) = -1,$ also $\prod_{i \in I_{x,y}}\eta^*(i) = -1.$ We obtain
$$\overline{yz\prod_{i \in I_{xz,yz}}\pi_ia_{x,y}^2a_{0,z}^2} \preceq^* \overline{xz\prod_{i \in I_{xz,yz}}\pi_ia_{x,y}^2a_{0,z}^2}.$$ Eliminating $\overline{z\prod_{i \in I_{0,z}}\pi_ia_{0,z}^2}$ via (QR5) yields $$\overline{y\prod_{i \in I_{x,y}}\pi_ia_{x,y}^2} \preceq^* \overline{x\prod_{i \in I_{x,y}}\pi_ia_{x,y}^2},$$ and
therefore $x \preceq y.$ If $\prod_{i \in I_{xz,yz}}\eta^*(i) = 1,$ then $\prod_{i \in I_{x,y}}\eta^*(i) = 1,$ and the proof is likewise. \vspace{1mm}

\noindent
Now let $\prod_{i \in I_{0,z}}\eta^*(i) = -1.$ Then $\overline{z\prod_{i \in I_{0,z}}\pi_ia_{0,z}^2} \prec^* 0.$ If $\prod_{i \in I_{xz,yz}}\eta^*(i) = -1,$ then $\prod_{i\in I_{x,y}}\eta^*(i) = 1.$ It follows
$$\overline{yz\prod_{x \in I_{xz,yz}}\pi_ia_{x,y}^2a_{0,z}^2} \preceq^* \overline{xz\prod_{i \in I_{xz,yz}}\pi_ia_{x,y}^2a_{0,z}^2}.$$ Applying Lemma \ref{QR5-} yields 
$\overline{x\prod_{i \in I_{x,y}}\pi_ia_{x,y}^2} \preceq^* \overline{y\prod_{i \in I_{x,y}}\pi_ia_{x,y}^2}.$ Therefore, $x \preceq y.$ The case
$\prod_{i \in I_{xz,yz}}\eta^*(i) = 1$ is analogue. \vspace{1mm}

\noindent
We conclude by showing that $\preceq$ is $v$-compatible. Suppose $0 \preceq x \preceq y$ but $v(x) < v(y)$ for some $x,y \in R.$ Note that $\gamma_{0,x} = -v(x) = \gamma_{x,y}.$ If $\prod_i\eta^*(i) = -1,$ then $0 \preceq x$ yields $$\overline{x\prod_{i \in I_{0,x}}\pi_i a_{0,x}^2} \prec^* 0 = \overline{y \prod_{i \in I_{0,x}} \pi_ia_{0,x}^2},$$ i.e. $y \prec x,$ a contradiction. The same argument works for $\prod_{i \in I_{0,x}}\eta^*(i) = 1.$
\end{proof}

\begin{remark} \label{baereasy} The quasi-order $\preceq$ from the Main Lemma becomes very simple in the case where $x \in U_v$ and $y \in R_v$ (or vice versa). Note that then $\gamma_{x,y} = 0.$ This implies $I = \emptyset.$ Hence, $\prod_i \eta^*(i) = 1.$ Moreover, the element $a$ satisfies $v(a) = 0,$ so by well-definedness of $\preceq$ we may simply choose $a = 1.$ Therefore $x \preceq y \Leftrightarrow \overline{x} \preceq^* \overline{y}.$ 
\end{remark}

\noindent
For the proof of the Baer-Krull Theorem we require two more lemmas. They will be used to compare the ``size'' of two quasi-orders on $R.$ 

\begin{lemma} \label{lemmainj1} Let $(R,\preceq)$ be a quasi-ordered ring and $x \in R.$ Then $E_0 + \{x\} \subseteq E_x.$
\begin{proof} For $x \in E_0$ there is nothing to show. So let $y \in R \backslash E_0$ such that $y = c + x$ for some $c \in E_0.$ Remark \ref{rema}(1) yields $c+x \sim x,$ so $y \in E_x.$
\end{proof}
\end{lemma}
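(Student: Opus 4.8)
The plan is simply to unwind what $E_0 + \{x\} \subseteq E_x$ means and then invoke a result already proved in the excerpt. Concretely, the claim is that $c + x \sim x$ for every $c \in E_0$, since $c + x \in E_x$ is by definition the statement $c + x \sim x$. So the whole proof reduces to establishing this equivalence, and I would organise it as a two-case split according to whether $x$ lies in the support $E_0$ or not.

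First I would dispose of the case $x \in E_0$. Here there is nothing genuine to prove: by Remark \ref{rema}(2) the support $E_0$ is a (prime) ideal of $R$, hence closed under addition, so $c + x \in E_0$. But then $c+x \sim 0 \sim x$, i.e. $c + x \in E_0 = E_x$, as required.

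In the remaining case $x \notin E_0$ we have $x \nsim 0$ while $c \sim 0$, which is exactly the hypothesis of Remark \ref{rema}(1) (quoted from \cite[Lemma 3.6]{sim}); it yields directly $c + x \sim x$, i.e. $c + x \in E_x$. There is no real obstacle in this argument — the only subtlety worth flagging is that Remark \ref{rema}(1) needs $x \nsim 0$, which is precisely the reason for making the case distinction in the first place rather than applying it blindly to all $x$.
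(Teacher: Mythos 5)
Your proof is correct and follows essentially the same route as the paper: dispose of the case $x \in E_0$ (using that $E_0$ is an ideal, so $E_0 + \{x\} \subseteq E_0 = E_x$), and otherwise apply Remark \ref{rema}(1) with $c \sim 0$ and $x \nsim 0$ to get $c + x \sim x$. Your explicit handling of the first case just spells out what the paper dismisses with ``nothing to show.''
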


\begin{lemma} \label{lemmainj2} Let $(R,\preceq)$ be a quasi-ordered ring and $x \in R.$ If $E_0 + \{x\} \subsetneq E_x,$ then $E_x = -E_x.$
\begin{proof} Let $z \in E_x$ be arbitrary and $y \in E_x \backslash (E_0 + \{x\}).$ We will show that $-y \in E_x.$ From $z \sim x \sim y$ and Corollary 
\ref{corpres} then follows $-z \sim -y \sim x,$ i.e. also $-z \in E_x,$ what proves that $E_x = -E_x.$ \vspace{1mm}

\noindent
The proof that $-y \in E_x$ is like in \cite[p.208]{fakh}. Assume for a contradiction that $-y \notin E_x.$ Then $y \preceq x \nsim -y,$ thus $0 \preceq x-y.$ Likewise, it follows from $x \preceq y \nsim -y$ that $x-y \preceq 0.$ Therefore, $x-y \in E_0,$ i.e. $y \in E_0 + \{x\},$ a contradiction. Hence, $-y \in E_x,$ i.e. $E_x = -E_x.$
\end{proof}
 
\end{lemma}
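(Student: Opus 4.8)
The plan is to build on Lemma \ref{lemmainj1}, which already yields $E_0 + \{x\} \subseteq E_x$; the strictness hypothesis then hands us a witness $y \in E_x$ with $y \notin E_0 + \{x\}$, i.e.\ $y \sim x$ but $y - x \notin E_0$. The whole lemma reduces to showing $-y \in E_x$ for this particular $y$: once that is known, a short multiplicativity argument promotes it to $E_x = -E_x$.

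First I would assume toward a contradiction that $-y \notin E_x$, i.e.\ $-y \nsim x$; since $x \sim y$, transitivity of $\sim$ forces $-y \nsim y$ as well. Now apply axiom (QR4) twice with summand $z = -y$. From $y \preceq x$ together with $-y \nsim x$ we get $y + (-y) \preceq x + (-y)$, that is $0 \preceq x - y$; from $x \preceq y$ together with $-y \nsim y$ we get $x + (-y) \preceq y + (-y)$, that is $x - y \preceq 0$. Hence $x - y \in E_0$, and since $E_0$ is an ideal (Remark \ref{rema}(2)) also $y - x \in E_0$, so that $y = (y - x) + x \in E_0 + \{x\}$ --- contradicting the choice of $y$. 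Therefore $-y \in E_x$, i.e.\ $-y \sim x$.

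For the final step, take an arbitrary $z \in E_x$. Then $z \sim x \sim y$, hence $z \sim y$, and Corollary \ref{corpres} (multiplying by $-1 \in R$) gives $-z \sim -y$; combining with $-y \sim x$ we obtain $-z \in E_x$. Thus $E_x \subseteq -E_x$, and applying this inclusion with $-E_x$ in place of $E_x$ (equivalently, negating both sides) yields $E_x = -E_x$. The only point demanding attention is the bookkeeping for the two uses of (QR4) --- one must check the side conditions $-y \nsim x$ and $-y \nsim y$, both of which are immediate from the contradiction hypothesis and $x \sim y$ --- together with the trivial fact that $E_0$ is closed under additive inverses; I do not expect any genuine obstacle.
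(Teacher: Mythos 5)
Your proposal is correct and follows essentially the same route as the paper: the same two applications of (QR4) with summand $-y$ to derive $x-y \in E_0$ and contradict $y \notin E_0 + \{x\}$, and the same use of Corollary \ref{corpres} (multiplication by $-1$) to pass from the single witness $y$ to all of $E_x$. The extra details you spell out (deducing $-y \nsim y$ by transitivity, and $y-x \in E_0$ since $E_0$ is an ideal) are exactly the implicit steps in the paper's argument.
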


\begin{notation} For a prime ideal $\mathfrak{p}$ of $R$ denote by $\mathcal{X}_{\mathfrak{p}}(R)$ the set of all quasi-orders on $R$ with support $\mathfrak{p}.$ Analogously, denote by $\mathcal{X}_{o,\mathfrak{p}}(R)$ (respectively $\mathcal{X}_{p,\mathfrak{p}}(R)$) the set of all orders (respectively proper quasi-orders) on $R$ with support $\mathfrak{p}.$ If $R$ is a field, we omit the index $\mathfrak{p}$ for obvious reasons.
\end{notation}

\noindent
In the Baer-Krull Theorem we demand that the support of the quasi-orders coincides with the support of our valuation. Note that it actually suffices to demand $\textrm{supp}(v) \subseteq \textrm{supp}(\preceq),$ the other implication being implied as follows: If $x \in \textrm{supp}(\preceq),$ then $0 \preceq x \preceq 0,$ so compatibility yields $v(x) = \infty \in \mathfrak{q}_v.$

\begin{theorem} \label{BaerKrullI} $\mathrm{(}$Baer-Krull Theorem for quasi-ordered rings I $\mathrm{)}$ \\
Let $R$ be a commutative ring with $1$ and $v$ a Manis valuation on $R.$ Then
\begin{align*}
\psi \colon \{\preceq \in \mathcal{X}_{\mathfrak{q}_v}(R)\colon \preceq \textrm{is } v\textrm{-compatible} & \} \to \{-1,1\}^I \times \mathcal{X}_{\{0\}}(Rv), \\ &\preceq \; \mapsto \; (\eta_{\preceq},\preceq')
\end{align*}
is a well-defined map such that $\psi \restriction \psi^{-1}(\mathcal{A}): \psi^{-1}(\mathcal{A}) \to \mathcal{A}$ is a bijection, where 
$\mathcal{A} := \{-1,1\}^I \times \mathcal{X}_{o,\{0\}}(Rv) \sqcup \{1\}^I \times \mathcal{X}_{p,\{0\}}(Rv).$
\end{theorem}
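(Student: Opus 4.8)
We sketch the argument in three stages: well-definedness, surjectivity of the restriction onto $\mathcal A$, and injectivity, which is by far the hardest point.

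Well-definedness is immediate. If $\preceq$ is a $v$-compatible quasi-order with support $\mathfrak q_v$ and $v$ is Manis, then by Theorem~\ref{qoringcomp}(3) the residue map $x\mapsto x+I_v$ induces a quasi-order $\preceq'$ on $Rv$ with support $\{0\}$, so $\preceq'\in\mathcal X_{\{0\}}(Rv)$; and $\eta_\preceq$ is visibly a function $I\to\{-1,1\}$. Hence $\psi$ is a well-defined map into $\{-1,1\}^I\times\mathcal X_{\{0\}}(Rv)$.

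For surjectivity onto $\mathcal A$, fix $(\eta^*,\preceq^*)\in\mathcal A$ and let $\preceq$ be the relation produced by Main Lemma~\ref{mainlemma}. By that lemma $\preceq$ is a quasi-order with support $E_0=\mathfrak q_v$, and the final step of its proof shows that $\preceq$ is $v$-compatible; thus $\preceq$ lies in the domain of $\psi$, and it remains to verify $\psi(\preceq)=(\eta^*,\preceq^*)$. For $\eta_\preceq=\eta^*$ one unwinds the defining formula at $x=0$, $y=\pi_i$: here $\gamma_{0,\pi_i}=-\gamma_i$, so $I_{0,\pi_i}=\{i\}$, and $\pi_i\prod_{j\in\{i\}}\pi_j a^2=(\pi_i a)^2$ is a square lying in $U_v$ while $0\cdot\prod_j\pi_j a^2\in I_v$; since squares are non-negative and $\preceq^*$ has trivial support, the formula forces $0\preceq\pi_i\Leftrightarrow\prod_{j\in\{i\}}\eta^*(j)=1\Leftrightarrow\eta^*(i)=1$. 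For $\preceq'=\preceq^*$, Remark~\ref{baereasy} gives $p\preceq q\Leftrightarrow\overline p\preceq^*\overline q$ whenever $p$ or $q$ lies in $U_v$; feeding this into the explicit description $\overline x\preceq'\overline y\Leftrightarrow\exists c_1,c_2\in I_v\colon x+c_1\preceq y+c_2$ and using $v(p+c)=0$ for $p\in U_v$, $c\in I_v$ (Lemma~\ref{valmin}) collapses $\preceq'$ to $\preceq^*$ on each residue class. Hence $\psi(\preceq)=(\eta^*,\preceq^*)$, so $\mathcal A\subseteq\operatorname{im}\psi$ and $\preceq\in\psi^{-1}(\mathcal A)$.

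The main obstacle is injectivity, equivalently the statement that every $\preceq$ in the domain with $\psi(\preceq)\in\mathcal A$ is \emph{recovered} from $(\eta_\preceq,\preceq')$ by the Main Lemma's formula. The plan is: given $x,y$ with (say) $x\in\tilde R$ or $y\in\tilde R$, set $t:=\prod_{i\in I_{x,y}}\pi_i\,a_{x,y}^2\in\tilde R$; since $a_{x,y}^2$ is a positive square and the sign of $\pi_i$ under $\preceq$ equals $\eta_\preceq(i)$, multiplicativity of signs (from (QR3) and Lemma~\ref{QR3-}) gives $\mathrm{sign}_\preceq(t)=\prod_i\eta_\preceq(i)$. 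Then, applying (QR3)/(QR5) when $t\succ0$ and Lemma~\ref{QR3-}/Lemma~\ref{QR5-} when $t\prec0$, the relation $x\preceq y$ becomes equivalent to $xt\preceq yt$, respectively $yt\preceq xt$, according to the sign of $t$; and $xt,yt\in R_v$ with at least one of them in $U_v$. One is thus reduced to matching this comparison inside $R_v$ with the comparison of $\overline{xt}$ and $\overline{yt}$ under $\preceq'$. This last identification is the delicate point, and is where Lemmas~\ref{lemmainj1} and \ref{lemmainj2} are meant to enter: they pin down the equivalence classes $E_x$ (so that the class of a unit is controlled by its residue together with its sign), which is what makes the passage from $R$ to $Rv$ reversible. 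I expect essentially all the real work of the theorem to sit in this step. Once the formula is matched, $\preceq$ coincides with the Main-Lemma quasi-order attached to $(\eta_\preceq,\preceq')$, so $\psi$ is injective on $\psi^{-1}(\mathcal A)$, and $\psi\restriction\psi^{-1}(\mathcal A)\colon\psi^{-1}(\mathcal A)\to\mathcal A$ is a bijection.
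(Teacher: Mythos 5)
Your well-definedness and surjectivity arguments are exactly the paper's: Theorem~\ref{qoringcomp}(3) gives well-definedness, and for surjectivity you use Main Lemma~\ref{mainlemma}, the square $(\pi_i a)^2$ computation for $\eta_{\preceq}=\eta^*$, and Remark~\ref{baereasy} for $\preceq'=\preceq^*$, just as in the paper. The gap is in injectivity. Your reduction --- multiply by $t=\prod_i\pi_i a_{x,y}^2$, use (QR3)/(QR5) or Lemmas~\ref{QR3-}/\ref{QR5-} according to the sign of $t$, and land in $R_v$ with at least one of $xt,yt$ in $U_v$ --- only yields the containment $\preceq_1\subseteq\preceq_2$, where $\preceq_1$ is the given quasi-order and $\preceq_2$ is the Main-Lemma quasi-order built from $(\eta_{\preceq_1},\preceq_1')$: indeed $p\preceq_1 q$ implies $\overline{p}\preceq_1'\overline{q}$ trivially, but the converse implication $\overline{p}\preceq_1'\overline{q}\Rightarrow p\preceq_1 q$ is precisely what ``matching the comparison inside $R_v$ with the comparison of residues'' requires, and it cannot simply be asserted: whenever $\overline{p}\sim_1'\overline{q}$ (for instance when $p$ and $q$ have the same residue), $\preceq_1'$ carries no directional information about $p$ versus $q$, and one must prove that in this situation $p\sim_1 q$ already holds in $R$. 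This is the actual content of the injectivity statement, and you have explicitly deferred it rather than proved it.

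Moreover, the way you propose to close this via Lemmas~\ref{lemmainj1} and \ref{lemmainj2} is not how they can be used: they do not ``control the class of a unit by its residue together with its sign'' in general. What the paper does is a case distinction on whether $-1\sim_{\preceq_2}1$. If $-1\nsim_{\preceq_2}1$, then Lemma~\ref{QR5} gives $x\nsim_{\preceq_2}-x$ for $x\in\tilde R$, and Lemmas~\ref{lemmainj1} and \ref{lemmainj2} show every $\preceq_2$-class equals $\mathfrak{q}_v+\{x\}$, i.e.\ $\preceq_2$ has the smallest possible equivalence classes with support $\mathfrak{q}_v$; combined with $\preceq_1\subseteq\preceq_2$ this forces equality. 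If $-1\sim_{\preceq_2}1$, those lemmas are of no help; instead one splits into $v(x)\neq v(y)$ (handled by upgrading the residue implication to an equivalence using Lemma~\ref{convexity}(1) and convexity of $I_v$) and $v(x)=v(y)$ (normalize to $v(x)=v(y)=0$ by a positive unit, note that $-1\sim_{\preceq_1}1$ follows from $\preceq_1\subseteq\preceq_2$, and derive a contradiction from $x\sim_{\preceq_2}y$, $x\prec_1 y$ via Lemma~\ref{lemmaineq} and convexity of $I_v$). Without an argument of this kind, the injectivity claim --- and hence the bijectivity of $\psi\restriction\psi^{-1}(\mathcal A)$ --- remains unproved.
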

\begin{proof}
By Theorem \ref{qoringcomp}(3) the map $\psi$ is well-defined. Next, let $(\eta^*, \preceq^*) \in \mathcal{A}$ be arbitrary. We prove that $\psi$ maps the quasi-order $\preceq$ constructed in the Main Lemma to the tuple $(\eta^*,\preceq^*).$ First we verify that $\eta_{\preceq} = \eta^*.$ To compare $\pi_i$ and $0$ w.r.t. $\preceq,$ let $\gamma := \max\{-v(\pi_i),-v(0)\} = -\gamma_i,$ i.e. $\gamma = v(\pi_ia^2)$ for some $a \in \tilde{R}.$ Hence, we have to consider $0$ and 
$\pi_i \pi_i a^2 = (\pi_ia)^2,$ and to distinguish whether $\eta^*(i)$ equals $1$ or $-1.$ Note that $0 \prec^* \overline{\pi_ia}^2,$ as it is a square and 
$\preceq^*$ has trivial support. From this observation we obtain $$\eta_{\preceq}(i) = 1 \Leftrightarrow 0 \preceq \pi_i \Leftrightarrow \eta^*(i) = 1,$$ and therefore $\eta_{\preceq} = \eta^*.$ \vspace{1mm}

\noindent
Next, we prove that $\preceq' = \preceq^*.$ Assume without loss of generality that not both $x,y \in I_v.$ Then also $x+c$ and $y+d$ are not both in $I_v$ for all $c,d \in I_v.$ It follows from Remark \ref{baereasy} that $x+c \preceq y+d \Leftrightarrow \overline{x+c} \preceq^* \overline{y+d}.$ Thus,
\begin{align*}
 \overline{x} \preceq' \overline{y} &\Leftrightarrow \exists c_1,c_2 \in I_v: x+c_1 \preceq y+c_2 \\& \Leftrightarrow \exists c_1,c_2 \in I_v: \overline{x+c_1} \preceq^* \overline{y+c_2}
 \\& \Leftrightarrow \overline{x} \preceq^* \overline{y}, 
\end{align*}
where the first equivalence just uses the definition of $\preceq'.$ \vspace{1mm}

\noindent
We conclude by showing that $\psi \restriction \psi^{-1}(\mathcal{A})$ is injective. Let $\preceq_1 \in \psi^{-1}(\mathcal{A})$ be arbitrary, and denote by $\preceq_2$ the quasi-order on $R$ defined by $\eta_{\preceq_1}$and $\preceq_1'$ (see Main Lemma). We prove that $\preceq_1 = \preceq_2.$ First of all we claim that $\preceq_1 \subseteq \preceq_2.$ So let $x,y \in R.$ Since $\preceq_1$ and $\preceq_2$ have both support $\mathfrak{q}_v,$ we may without loss of generality assume that 
$x \notin \mathfrak{q}_v$ or $y \notin \mathfrak{q}_v.$ Let $I, \pi_i$ and $a$ be as in the definition of the quasi-order $\preceq_2.$ 
First suppose that $\prod_i \eta_{\preceq_1}(i) = -1,$ i.e. $\prod_i\pi_ia^2 \prec_1 0.$ With Lemma \ref{QR3-} and Lemma \ref{QR5-}, we obtain
\begin{align*}
     x \preceq_1 y &\Leftrightarrow y \prod_i \pi_i a^2 \preceq_1 x \prod_i \pi_i a^2 \\ & \Rightarrow \overline{y\prod_i\pi_ia^2} \preceq_1' \overline{x\prod_i\pi_ia^2} \\ & \Leftrightarrow x \preceq_2 y. 
\end{align*}
Likewise, if $\prod_i \eta_{\preceq_1}(i) = 1,$ we just apply (QR3) instead of Lemma \ref{QR3-} and (QR5) instead of Lemma \ref{QR5-} to get the same result. Thus, $\preceq_1 \subseteq \preceq_2.$ For the rest of the proof we distinguish the cases $-1 \nsim_{\preceq_2} 1$ and $-1 \sim_{\preceq_2} 1.$ \vspace{1mm}

\noindent
If $-1 \nsim_{\preceq_2} 1,$ then Lemma \ref{QR5} yields $-x \nsim_{\preceq_2} x$ for all $x \in \tilde{R},$ so $E_{x,\preceq_2} \neq -E_{x, \preceq_2}$ for all such $x.$ From Lemma \ref{lemmainj1} and Lemma \ref{lemmainj2} follows $E_{x,\preceq_2} = \mathfrak{q}_v + \{x\}$ for all $x \in R.$ So Lemma \ref{lemmainj1} yields that $\preceq_2$ is the smallest quasi-order with support $\mathfrak{q}_v$ possible. Therefore, $\preceq_1 \subseteq \preceq_2$ implies equality, as desired. So suppose for the rest of this proof that $-1 \sim_{\preceq_2} 1.$ We distinguish the subcases $v(x) \neq v(y)$ and $v(x) = v(y).$ \vspace{1mm}

\noindent
If $v(x) \neq v(y),$ then Lemma \ref{residue} states $\overline{x \prod_i \pi_ia^2} \neq 0$ and $\overline{y \prod_i \pi_ia^2} = 0,$ or vice versa. We show $\preceq_1 = \preceq_2$ by proving that the only $\Rightarrow$ above is also an equivalence. First suppose that $\overline{y\prod_i\pi_ia^2} = 0.$ Assume for a contradiction that $$0 = \overline{y\prod_i\pi_ia^2} \preceq_1' \overline{x\prod_i\pi_ia^2} \textrm{ but } x \prod_i \pi_ia^2 \prec_1 y\prod_i\pi_ia^2.$$ Then we find some 
$c_1, c_2 \in I_v$ such that $c_1 \preceq_1 x \prod_i \pi_ia^2 + c_2.$ With Lemma \ref{convexity}(1) follows 
$c_1-c_2 \preceq_1 x\prod_i\pi_ia^2 \prec_1 y \prod_i \pi_ia^2,$ thus convexity of $I_v$ yields $x\prod_i\pi_ia^2 \in I_v,$ contradicting 
$\overline{x\prod_i\pi_ia^2} \neq 0.$ Now suppose that $\overline{x\prod_i\pi_ia^2} = 0.$ Then we obtain that 
$$y\prod_i\pi_ia^2 + c \preceq_1 x \prod_i \pi_ia^2 \prec_1 y \prod_i\pi_ia^2,$$ and taking residues yields that $\overline{y\prod_i\pi_ia^2} = 0,$ since the support of $\preceq'$ is trivial, a contradiction. \vspace{1mm}

\noindent
So finally suppose that $v(x) = v(y),$ and assume for a contradiction that $x \sim_{\preceq_2} y,$ but $x \prec_1 y.$ Choose $a \in \tilde{R}$ such that $0 \prec_1 a$ (and hence $0 \prec_2 a)$ and $v(a) = -v(x).$ Note that $ax \prec_1 ay$ if and only if $x \prec_1 y$ (by (QR5) and (QR3)), and also $ax \sim_{\preceq_2} ay$ if and only if $x \sim_{\preceq_2} y$ (Lemma \ref{QR5} and Corollary \ref{corpres}). So we may replace $x$ and $y$ with $ax$ and $ay.$ In other words, we may without loss of generality assume that $v(x) = v(y) = 0.$ It holds $y \preceq_2 x.$ So by definition of $\preceq_2$ and the fact that $v(x) = v(y) = 0,$ we get that $\overline{y} \preceq_1' \overline{x}$ (see Remark \ref{baereasy}). Thus, there exist some $c_1,c_2 \in I_v$ such that $y+c_1 \preceq_1 x+c_2,$ respectively, $y \preceq_1 x+c$ for $c:= c_2-c_1$ (see Lemma \ref{convexity}(1)). Recall that $-1 \sim_{\preceq_2} 1.$ But then also $-1 \sim_{\preceq_1} 1.$ Otherwise $-1 \preceq_1 0,$ but 
$-1 \npreceq_2 0,$ contradicting the fact that $\preceq_1 \subseteq \preceq_2.$ Therefore, Corollary \ref{corpres} and Lemma \ref{lemmaxxx} yield that all elements in $R$ are non-negative with respect to $\sim_1.$ Particularly, $0 \prec_1 -1.$ So Lemma \ref{lemmaineq} implies $y \preceq_1 x+c \preceq_1 \max\{x,c\} \prec_1 y,$ a contradiction (note that $y \preceq c$ would contradict the convexity of $I_v,$ as $0 \prec_1 y$). This finishes the proof of the Baer-Krull Theorem.
\end{proof}

\noindent
Note that for the sake of uniformity, we avoided the dichotomy from \cite[Theorem 3.8]{sim}, stating that every quasi-ordered ring is either an ordered or else a valued ring, throughout the entire paper. Taking this theorem into consideration, the Baer-Krull Theorem simplifies as follows:

\begin{corollary} \label{BaerKrullII} $\mathrm{(}$Baer-Krull Theorem for quasi-ordered rings II $\mathrm{)}$ \\
Let $R$ be a commutative ring with $1$ and $v$ a Manis valuation on $R.$ Then the map
\begin{align*}
\psi \colon \{\preceq \; \in \mathcal{X}_{\mathfrak{q}_v}(R)\colon \preceq \textrm{is } v&\textrm{-compatible}\} \to \mathcal{A}, \\& \preceq \; \mapsto \; (\eta_{\preceq},\preceq')
\end{align*}
is a bijection, where $\mathcal{A}$ is defined as in Theorem \ref{BaerKrullI}.
\end{corollary}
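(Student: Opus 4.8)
The plan is to derive Corollary~\ref{BaerKrullII} from Theorem~\ref{BaerKrullI} by invoking the dichotomy \cite[Theorem 3.8]{sim}: every quasi-ordered ring is either an ordered ring or a proper quasi-ordered ring (i.e.\ $\preceq \; = \; \preceq_w$ for some valuation $w$). The key observation is that under this dichotomy, the domain of $\psi$ is \emph{exactly} $\psi^{-1}(\mathcal{A})$, so the restriction statement of Theorem~\ref{BaerKrullI} becomes the full bijection claimed here.

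First I would show the inclusion $\psi(\{\preceq \in \mathcal{X}_{\mathfrak{q}_v}(R) \colon \preceq \textrm{ is } v\textrm{-compatible}\}) \subseteq \mathcal{A}$. Let $\preceq$ be such a quasi-order. By Theorem~\ref{qoringcomp}(3), $\preceq'$ is a quasi-order on $Rv$ with support $\{0\}$, so $(\eta_\preceq, \preceq') \in \{-1,1\}^I \times \mathcal{X}_{\{0\}}(Rv)$. Now apply the dichotomy to $\preceq$: if $\preceq$ is an order, then by Remark~\ref{qoringcompapp}(4) $\preceq'$ is an order on $Rv$, hence $(\eta_\preceq, \preceq') \in \{-1,1\}^I \times \mathcal{X}_{o,\{0\}}(Rv)$. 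If instead $\preceq \; = \; \preceq_w$ is a proper quasi-order, then again by Remark~\ref{qoringcompapp}(4) $\preceq'$ is a proper quasi-order on $Rv$; moreover every element of $R$ is non-negative w.r.t.\ $\preceq$, in particular $0 \preceq \pi_i$ for all $i \in I$, so $\eta_\preceq \equiv 1$, giving $(\eta_\preceq, \preceq') \in \{1\}^I \times \mathcal{X}_{p,\{0\}}(Rv)$. Either way $(\eta_\preceq,\preceq') \in \mathcal{A}$, so $\psi$ indeed maps into $\mathcal{A}$; equivalently, the domain of $\psi$ equals $\psi^{-1}(\mathcal{A})$.

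Given this, Corollary~\ref{BaerKrullII} follows immediately from Theorem~\ref{BaerKrullI}: the map $\psi \restriction \psi^{-1}(\mathcal{A}) \colon \psi^{-1}(\mathcal{A}) \to \mathcal{A}$ is a bijection by that theorem, and we have just shown $\psi^{-1}(\mathcal{A})$ is the entire domain $\{\preceq \in \mathcal{X}_{\mathfrak{q}_v}(R) \colon \preceq \textrm{ is } v\textrm{-compatible}\}$, while $\mathcal{A}$ is the entire codomain under consideration.

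There is no real obstacle here; the work has all been done in Theorem~\ref{BaerKrullI} and Remark~\ref{qoringcompapp}(4), and the only new ingredient is the appeal to the dichotomy from \cite{sim}. The mild subtlety worth spelling out is that membership in $\mathcal{A}$ couples the two coordinates — the constraint $\eta^* \equiv 1$ is forced precisely in the proper-quasi-order branch — so one must check that the branch of the dichotomy realized by $\preceq$ on $R$ matches the branch realized by $\preceq'$ on $Rv$; this is exactly the content of Remark~\ref{qoringcompapp}(4) together with the remark that proper quasi-orders make all elements non-negative.
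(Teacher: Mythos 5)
Your argument is correct and is essentially the paper's own proof: the paper likewise combines the dichotomy \cite[Theorem 3.8]{sim} with Remark \ref{qoringcompapp}(4) to conclude that $\psi^{-1}(\mathcal{A})$ is the whole domain of $\psi$, and then cites Theorem \ref{BaerKrullI}. Your write-up just spells out the case distinction (order versus proper quasi-order, with $\eta_{\preceq}\equiv 1$ forced in the latter case) that the paper leaves implicit.
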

\begin{proof} \cite[Theorem 3.8]{sim} and Remark \ref{qoringcompapp}(4) yield that $\psi^{-1}(\mathcal{A})$ coincides with the domain of $\psi.$ The statement follows now immediately from the previous theorem.
\end{proof}

\noindent
We continue our discussion of the Baer-Krull Theorem by weakening the assumption that $v$ is Manis. So let $(R,v)$ be an arbitrary valued ring with support $\mathfrak{p}$ and value group $\Gamma := \Gamma_v.$ Note that the Manis property is not necessary to choose an $\mathbb{F}_2$-basis of $\overline{\Gamma} = \Gamma/2\Gamma$ with preimages in $R,$ as for any $\gamma \in \Gamma$ either $\gamma \in v(\tilde{R}),$ or $-\gamma \in v(\tilde{R}),$ or both. Let the $\pi_i, \gamma_i$ etc. be as above. Furthermore, let $\nu$ denote the unique extension from $v$ to $K:= \textrm{Quot}(R/\mathfrak{p}).$ Then Corollary \ref{BaerKrullII} yields a bijective correspondence

\begin{align*}
\psi \colon \{\trianglelefteq \; \in \mathcal{X}(K)\colon \trianglelefteq \textrm{is } \nu&\textrm{-compatible}\} \to \mathcal{A}, \\& \trianglelefteq \; \mapsto \; (\eta_{\trianglelefteq},\trianglelefteq')
\end{align*}
where $\mathcal{A} := \{-1,1\}^I \times \mathcal{X}(K\nu) \sqcup \{1\}^I \times \mathcal{X}(K\nu).$ \\

\noindent
From \cite[Proposition 2.7]{sim} we know that any quasi-order $\preceq$ on $R$ with support $\mathfrak{p}$ uniquely extends to a quasi-order $\trianglelefteq$ on $K:= \textrm{Quot}(R/\mathfrak{p}).$ Moreover, from Lemma \ref{compequiv} follows that $\preceq$ is $v$-compatible if and only if $\trianglelefteq$ is $\nu$-compatible. Hence, there is a bijective correspondence
\begin{align*}
\lambda: \{\preceq \; \in \mathcal{X}_{\mathfrak{p}}(R)\colon \preceq \textrm{is } v&\textrm{-compatible}\} \to \{\trianglelefteq \; \in \mathcal{X}(K)\colon \trianglelefteq \textrm{is } \nu\textrm{-compatible}\}, \\& \preceq \; \mapsto \; \trianglelefteq. 
\end{align*}

\noindent
Considering the composition $\psi \circ \lambda$ yields:

\begin{theorem} \label{BaerKrullIII} (Baer-Krull Theorem for quasi-ordered rings III) \\
Let $R$ be a commutative ring with $1$ and $v$ a valuation on $R.$ Then the map
\begin{align*}
\psi \circ \lambda \colon \{\preceq \; \in \mathcal{X}_{\mathfrak{p}}(R)\colon \preceq \textrm{is } v\textrm{-compatible}&\} \to \mathcal{A}, \\ &\; \preceq \; \mapsto \; (\eta_{\trianglelefteq},\trianglelefteq')
\end{align*}
is a bijection, where $\mathcal{A} := \{-1,1\}^I \times \mathcal{X}(K\nu) \sqcup \{1\}^I \times \mathcal{X}(K\nu).$
\end{theorem}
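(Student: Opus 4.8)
The plan is to obtain $\psi\circ\lambda$ simply as the composition of the two bijections that were assembled in the discussion preceding the statement, so the work consists only in checking that the hypotheses of Corollary \ref{BaerKrullII}, of \cite[Proposition 2.7]{sim}, and of Lemma \ref{compequiv} are all in force, and then reading off the composite. First I would handle the Manis side. The extension $\nu$ of $v$ to the field $K=\textrm{Quot}(R/\mathfrak{p})$ is automatically surjective onto its value group, and that value group is $\Gamma_v$: $\nu$ extends the valuation $v'$ induced on $R/\mathfrak{p}$, whose value group is already $\Gamma_v$, and passing to the quotient field only adds differences $\nu(x)-\nu(y)\in\Gamma_v$. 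Thus $\nu$ is a Manis valuation on $K$ with $\Gamma_\nu=\Gamma_v$ and residue field $K\nu$. Consequently the $\mathbb{F}_2$-basis $\{\overline{\gamma_i}:i\in I\}$ of $\overline{\Gamma_v}=\overline{\Gamma_\nu}$ together with the elements $\pi_i\in\tilde{R}$ from Notation \ref{notation2}, viewed in $K$ via $R\to R/\mathfrak{p}\hookrightarrow K$, form admissible Baer--Krull data for $(K,\nu)$, since $\nu(\pi_i)=v(\pi_i)=\gamma_i$. Applying Corollary \ref{BaerKrullII} to $(K,\nu)$ then yields the bijection $\psi$ onto $\mathcal{A}$; here one also notes that on a field the support of a quasi-order is a prime ideal not containing $1$, hence $\{0\}$, so that $\mathcal{X}_{\mathfrak{q}_\nu}(K)=\mathcal{X}(K)$, that $\nu$-compatibility forces support $\{0\}$, and that $\mathcal{X}_{o,\{0\}}(K\nu)$, $\mathcal{X}_{p,\{0\}}(K\nu)$ are just the orders, respectively proper quasi-orders, on the field $K\nu$, which explains the displayed form of $\mathcal{A}$.

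Next I would treat $\lambda$. By \cite[Proposition 2.7]{sim} every $\preceq\in\mathcal{X}_{\mathfrak{p}}(R)$ extends uniquely to a quasi-order $\trianglelefteq$ on $K$; conversely, pulling a quasi-order on $K$ back along the canonical map $R\to K$ yields a quasi-order on $R$ (the axioms (QR1)--(QR5) transfer verbatim) whose support is the preimage of $\textrm{supp}(\trianglelefteq)=\{0\}$, i.e.\ exactly $\mathfrak{p}$. These two operations are mutually inverse by the uniqueness of the extension, so $\lambda$ is a bijection $\mathcal{X}_{\mathfrak{p}}(R)\to\mathcal{X}(K)$. Since $\textrm{supp}(v)=\mathfrak{p}=E_0$ for every $\preceq$ in this set, Lemma \ref{compequiv} applies (its ``$\overline{v'}$'' being precisely $\nu$) and gives that $\preceq$ is $v$-compatible if and only if $\trianglelefteq$ is $\nu$-compatible. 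Hence $\lambda$ restricts to a bijection between the two sets of compatible quasi-orders occurring in the theorem.

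Finally, composing: $\psi\circ\lambda$ sends a $v$-compatible $\preceq\in\mathcal{X}_{\mathfrak{p}}(R)$ first to its extension $\trianglelefteq$ and then to $(\eta_{\trianglelefteq},\trianglelefteq')$, which is exactly the map in the statement, and being a composite of bijections it is itself a bijection onto $\mathcal{A}$. The only point that requires genuine care rather than bookkeeping is confirming that $\lambda$ really lands in, and exhausts, the $\nu$-compatible quasi-orders on $K$ (that the pullback has support precisely $\mathfrak{p}$ and not something larger, and that restriction and extension are genuinely inverse); everything else — the Manis property of $\nu$, the identification $\Gamma_\nu=\Gamma_v$, and the re-use of the $\pi_i$ as Baer--Krull data for $(K,\nu)$ — is routine.
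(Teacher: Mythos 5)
Your proposal is correct and follows essentially the same route as the paper: apply Corollary \ref{BaerKrullII} to the Manis valuation $\nu$ on $K=\mathrm{Quot}(R/\mathfrak{p})$, use \cite[Proposition 2.7]{sim} together with Lemma \ref{compequiv} to obtain the bijection $\lambda$ between the $v$-compatible quasi-orders on $R$ with support $\mathfrak{p}$ and the $\nu$-compatible quasi-orders on $K$, and then compose. The additional checks you spell out (surjectivity of $\nu$, $\Gamma_\nu=\Gamma_v$, reuse of the $\pi_i$ as Baer--Krull data, and mutual inverseness of extension and restriction) are precisely the points the paper leaves implicit in the discussion preceding the theorem.
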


\noindent
In the last step we want the co-domain to go back from $K\nu$ to $Rv$ again. Note that if $v$ is a valuation on $R$ with support $\mathfrak{p},$ then $Rv$ is a domain. So we can consider $L :=\textrm{Quot}(Rv).$ We can also take the extension $\nu$ from $v$ to $K= \textrm{Quot}(R/\mathfrak{p}),$ and then consider the residue class field $K\nu = K_{\nu}/I_{\nu}.$ 

\begin{lemma} \label{fields} Let $v$ be a valuation on $R$ with support $\mathfrak{p},$ and let $\nu$ denote the unique extension from $v$ to $K:= \textrm{Quot}(R/\mathfrak{p}).$ Then $L:= \textrm{Quot}(Rv)$ is (isomorphic to) a subfield of $K\nu.$
\end{lemma}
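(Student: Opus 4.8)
The plan is to produce an explicit embedding of $L$ into $K\nu$ in two stages: first an embedding of the domain $Rv$ into $K\nu$, and then an extension to the fraction field by the universal property of $\mathrm{Quot}$. Recall that $\nu$ is the canonical extension of $v$ to $K=\mathrm{Quot}(R/\mathfrak{p})$, given by $\nu(\overline{a}/\overline{b})=v(a)-v(b)$; in particular $\nu(\overline{x})=v(x)$ for every $x\in R$, where $\overline{x}$ denotes the image of $x$ in $R/\mathfrak{p}\subseteq K$. First I would note that the composite ring homomorphism $R\to R/\mathfrak{p}\hookrightarrow K$ restricts to a ring homomorphism $\iota\colon R_v\to K$, $x\mapsto\overline{x}$. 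For $x\in R_v$ we have $\nu(\iota(x))=v(x)\geq 0$, so $\iota(R_v)\subseteq K_\nu$, and composing with the residue map $K_\nu\to K\nu=K_\nu/I_\nu$ yields a ring homomorphism $\overline{\iota}\colon R_v\to K\nu$.

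Next I would compute the kernel of $\overline{\iota}$. For $x\in R_v$, $\overline{\iota}(x)=0$ in $K\nu$ if and only if $\iota(x)=\overline{x}$ lies in $I_\nu$, i.e. if and only if $\nu(\overline{x})=v(x)>0$, i.e. if and only if $x\in I_v$. Hence $\ker\overline{\iota}=I_v$, and $\overline{\iota}$ factors through an \emph{injective} ring homomorphism $Rv=R_v/I_v\hookrightarrow K\nu$.

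Finally, since $Rv$ is an integral domain (as recalled just above) and $K\nu$ is a field --- indeed $K_\nu$ is local with maximal ideal $I_\nu$, $K$ being a field --- the injection $Rv\hookrightarrow K\nu$ extends, by the universal property of the field of fractions, to a field homomorphism $L=\mathrm{Quot}(Rv)\to K\nu$, $\tfrac{a}{b}\mapsto\overline{\iota}(a)\,\overline{\iota}(b)^{-1}$, which is automatically injective; its image is then a subfield of $K\nu$ isomorphic to $L$, which is the assertion. I do not expect a genuine obstacle here: the only points needing a moment's care are that $\nu\circ\iota=v$ on $R_v$ (immediate from the definition of $\nu$) and that the kernel of $\overline{\iota}$ is exactly $I_v$ and not something larger, which in turn rests on the inclusion $\mathfrak{p}\subseteq I_v$.
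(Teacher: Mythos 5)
Your proof is correct and follows essentially the same route as the paper: the map $\overline{\iota}\colon R_v\to K\nu$, $x\mapsto \frac{x+\mathfrak{p}}{1+\mathfrak{p}}+I_\nu$, is exactly the paper's canonical map, the kernel computation $\ker=I_v$ is the same, and the final passage to $L=\mathrm{Quot}(Rv)$ (the paper simply says the field $K\nu$ containing the domain $Rv$ also contains its quotient field) matches your use of the universal property of the fraction field.
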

\begin{proof} We consider the canonical map
\[
\phi: R_v \to K\nu, \; x \mapsto \frac{x+\mathfrak{p}}{1+\mathfrak{p}} + I_{\nu}.
\] 
Note that
\[
x \in \textrm{ker}(\phi) \Leftrightarrow \frac{x+\mathfrak{p}}{1+\mathfrak{p}} \in I_{\nu} \Leftrightarrow x+\mathfrak{p} \in I_{v'} \Leftrightarrow x \in I_v,
\]
where $v'$ denotes the valuation on $R/\mathfrak{q}_v$ defined by $v'(\overline{x}) = v(x)$ (see \cite[Lemma 2.5]{sim}). So the homomorphism theorem yields that $K\nu$ is a field containing the domain $R_v/I_v = Rv.$ Hence, it also contains its quotient field $L.$
\end{proof}

\begin{definition} (\cite[p. 975]{valente}) Let $R$ be a commutative ring with $1$ and $v$ a valuation on $R.$ Then $v$ is said to be \textbf{special*}, if $\textrm{Quot}(Rv) = K\nu.$
\end{definition} 

\noindent
Note that we write special*, because in \cite{kneb} ``special valuations'' refer to a different class of valuations. Let us give a few examples for special*-valuations.

\begin{lemma} Any Manis valuation is special*.
\end{lemma}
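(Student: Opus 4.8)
The plan is to deduce the statement from Lemma \ref{fields}: that lemma already embeds $L := \textrm{Quot}(Rv)$ into $K\nu$ via the map $\phi$ there, so it suffices to prove the reverse inclusion $K\nu \subseteq L$. Since $K = \textrm{Quot}(R/\mathfrak{p})$ is a field, an arbitrary element of $K\nu$ has the form $\zeta + I_\nu$ for some $\zeta \in K_\nu$; the case $\zeta + I_\nu = 0$ being trivial, I may assume $\zeta \neq 0$ and write $\zeta = \frac{x+\mathfrak{p}}{y+\mathfrak{p}}$ with $x, y \in R \setminus \mathfrak{p}$. The hypothesis $\nu(\zeta) \geq 0$ then reads $v(x) \geq v(y)$ in $\Gamma_v$.

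The key step is to clear the denominator using that $v$ is Manis: I would pick $a \in R$ with $v(a) = -v(y)$, so that $ay \in U_v$ and $ax \in R_v$ (because $v(ax) = v(x) - v(y) \geq 0$). Since $\zeta = \frac{ax + \mathfrak{p}}{ay + \mathfrak{p}}$ in $K$ and $\phi$ factors through $Rv$ (its kernel being $I_v$, as computed in the proof of Lemma \ref{fields}), taking residues gives $\zeta + I_\nu = \overline{ax}\,/\,\overline{ay}$ in $K\nu$, where $\overline{ax} = ax + I_v$ and $\overline{ay} = ay + I_v$ lie in $Rv$, and $\overline{ay} \neq 0$ since $v(ay) = 0$. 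Hence $\zeta + I_\nu$ lies in the image of $\textrm{Quot}(Rv) = L$, which is exactly what is needed.

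Putting the two inclusions together yields $K\nu = L$, i.e.\ $v$ is special*. I do not anticipate a genuine obstacle here: the only care required is bookkeeping with the supports (that $a \notin \mathfrak{p}$, that $\frac{ax + \mathfrak{p}}{ay + \mathfrak{p}}$ is a legitimate element of $K$, and that the identification of $Rv$ with its image under $\phi$ is used consistently), and this becomes routine once the Manis surjectivity has been invoked to produce $a$.
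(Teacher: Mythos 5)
Your proposal is correct and uses essentially the same argument as the paper: the key step in both is to invoke Manis surjectivity to choose $a$ with $v(a)=-v(y)$, so that $ay \in U_v$ and $ax \in R_v$, which places the residue of $\frac{x+\mathfrak{p}}{y+\mathfrak{p}}$ in $\mathrm{Quot}(Rv)$. The only cosmetic difference is framing: you show the embedding $\mathrm{Quot}(Rv)\hookrightarrow K\nu$ from Lemma \ref{fields} is surjective, whereas the paper constructs the map $K_\nu \to \mathrm{Quot}(Rv)$ directly and identifies its kernel as $I_\nu$ via the homomorphism theorem.
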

\begin{proof} By Lemma \ref{fields}, it suffices to show that $K\nu$ is a subfield of $\textrm{Quot}(Rv).$ We argue again via the homomorphism theorem. This time, we consider the map
\[
\phi: K_{\nu} \to \textrm{Quot}(Rv), \; \frac{x+\mathfrak{p}}{y+\mathfrak{p}} \mapsto \frac{x+I_v}{y+I_v}.
\]
Since $\mathfrak{p} \subseteq I_v,$ the choice of representatives in the domain does not matter. To show that $\phi$ is well defined, we additionally have to prove that $\phi$ maps any element $\frac{x+\mathfrak{p}}{y+\mathfrak{p}} \in K_v$ to an element in $\textrm{Quot}(Rv).$ The fact that this element lies in $K_v$ implies that $v'(y+\mathfrak{p}) \leq v'(x+\mathfrak{p}),$ and therewith $v(y) \leq v(x).$ Since $v$ is Manis, we find some $a \in R$ such that $v(a) = -v(y),$ i.e. $v(ay) = 0 \leq v(ax).$ Thus, $ay \in U_v$ and $ax \in R_v,$ so $\frac{ax+I_v}{ay+I_v} \in \textrm{Quot}(Rv).$ Therefore, also $\frac{x+I_v}{y+I_v} \in \textrm{Quot}(Rv),$ as this fraction equals $\frac{ax+I_v}{ay+I_v}.$ \vspace{1mm}

\noindent
It is easy to see that $\phi$ is a ring homomorphism. It remains to show that $I_{\nu}$ is the kernel of $\phi.$ Note that
\[
\frac{x+\mathfrak{p}}{y+\mathfrak{p}} \in \textrm{ker}(\phi) \Leftrightarrow v(x) > v(y) \Leftrightarrow v'(x+\mathfrak{p}) > v'(y+\mathfrak{p}) \Leftrightarrow \frac{x+\mathfrak{p}}{y+\mathfrak{p}} \in I_{\nu}.
\]
Applying the homomorphism theorem finishes the proof.
\end{proof}

\noindent
We give further examples to show that the class of special* valuations is strictly contained in the class of Manis valuations.

\begin{example} \label{special*} \hspace{7cm}
\begin{enumerate}
\item For any prime number $p \in \mathbb{N},$ the $p$-adic valuation $v$ on $R = \mathbb{Z}$ is special*. We have $$\textrm{Quot}(Rv) = \mathbb{F}_p = K\nu.$$ 
\vspace{1mm}

\item Let $S$ be a ring and $R = S[X].$ The degree valuation $v: R \to \mathbb{Z} \cup \{\infty\}, \; f \mapsto -\textrm{deg}(f)$ is special*. We have 
\[
\textrm{Quot}(Rv) = \mathbb{Q} = K\nu.
\] 
\end{enumerate}
\end{example}

\noindent
Note that for special* valuations there is a bijective correspondence between the orderings on $Rv$ and $K\nu = \textrm{Quot}(Rv).$ This allows us to reformulate the Baer-Krull Theorem for this class. Let $\mu$ denote the map sending $(\eta_{\trianglelefteq}, \trianglelefteq')$ to $(\eta_{\preceq},\preceq'),$ i.e. $\mu$ restricts the quasi-order $\trianglelefteq'$ on $Kv$ to the subring $Rv,$ while it is the identity in the first component, as, by the choice of the $\pi_i,$ the equality $\eta_{\preceq} = \eta_{\trianglelefteq}$ holds.

\begin{theorem} \label{BaerKrullIV} (Baer-Krull Theorem for quasi-ordered rings IV) \\
Let $R$ be a commutative ring with $1$ and $v$ a special* valuation on $R.$ Then the map
\begin{align*}
\mu \circ \psi \circ \lambda \colon \{\preceq \; \in \mathcal{X}_{\mathfrak{p}}(R)\colon \preceq \textrm{is } v\textrm{-compatible}&\} \to \mathcal{A}, \\ &\; \preceq \; \mapsto \; (\eta,\preceq')
\end{align*}
is a bijection, where $\mathcal{A} := \{-1,1\}^I \times \mathcal{X}_{o}(Rv) \sqcup \{1\}^I \times \mathcal{X}_{p}(Rv).$
\end{theorem}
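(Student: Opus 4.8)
The proof of Theorem \ref{BaerKrullIV} is essentially a bookkeeping argument: we have already assembled all the pieces, and the task is to check that composing the three maps $\lambda$, $\psi$ and $\mu$ yields a bijection onto the stated set $\mathcal{A}$. The plan is as follows. First I would recall that $\lambda$ is a bijection onto $\{\trianglelefteq \in \mathcal{X}(K)\colon \trianglelefteq \textrm{ is } \nu\textrm{-compatible}\}$: this is exactly the content of \cite[Proposition 2.7]{sim} (unique extension of a quasi-order of support $\mathfrak{p}$ to $K=\textrm{Quot}(R/\mathfrak{p})$) together with Lemma \ref{compequiv}, which guarantees that $v$-compatibility of $\preceq$ is equivalent to $\nu$-compatibility of its extension $\trianglelefteq$. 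Next, by Corollary \ref{BaerKrullII} applied to the \emph{field} $K$ with its (automatically Manis) valuation $\nu$, the map $\psi$ is a bijection from the set of $\nu$-compatible orderings/quasi-orderings on $K$ onto $\{-1,1\}^I \times \mathcal{X}(K\nu) \sqcup \{1\}^I \times \mathcal{X}(K\nu)$; note that on a field the support of a quasi-order is automatically $\{0\}$, so $\mathcal{X}_{\{0\}}(K\nu)=\mathcal{X}(K\nu)$, and the dichotomy \cite[Theorem 3.8]{sim} (via Remark \ref{qoringcompapp}(4)) splits this into orders and proper quasi-orders as in $\mathcal{A}$. Combining these two, $\psi\circ\lambda$ is the bijection of Theorem \ref{BaerKrullIII}.

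The remaining step is to see that $\mu$ is a bijection from $\{-1,1\}^I \times \mathcal{X}(K\nu) \sqcup \{1\}^I \times \mathcal{X}(K\nu)$ onto $\{-1,1\}^I \times \mathcal{X}_{o}(Rv) \sqcup \{1\}^I \times \mathcal{X}_{p}(Rv)$. In the first coordinate $\mu$ is the identity on $\{-1,1\}^I$ (resp. $\{1\}^I$), which is a bijection trivially; the content is entirely in the second coordinate. Here I would invoke that $v$ is special*, i.e. $\textrm{Quot}(Rv)=K\nu$, so restriction of a quasi-order from the field $K\nu$ to the subring $Rv$ is a bijection $\mathcal{X}(K\nu)\to\mathcal{X}_{\{0\}}(Rv)$ onto quasi-orders of $Rv$ with support $\{0\}$: injectivity is the uniqueness half of \cite[Proposition 2.7]{sim} (a quasi-order on a domain of support $\{0\}$ extends uniquely to its quotient field), and surjectivity is the existence half. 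Moreover this restriction preserves the order/p.q.o.\ dichotomy — $\trianglelefteq'$ is an order on $K\nu$ iff $-1\not\trianglelefteq' 0$ in $K\nu$ iff $-1\not\preceq' 0$ in $Rv$ (using $1,-1\in Rv$ and that the extension agrees with $\trianglelefteq'$ on $Rv$) iff $\preceq'$ is an order on $Rv$ (cf. Notation \ref{notation}(1)), and similarly the proper case corresponds to $\eta^*\in\{1\}^I$. Hence $\mu$ restricts to bijections $\{-1,1\}^I\times\mathcal{X}(K\nu)\to\{-1,1\}^I\times\mathcal{X}_o(Rv)$ and $\{1\}^I\times\mathcal{X}(K\nu)\to\{1\}^I\times\mathcal{X}_p(Rv)$, and is therefore a bijection on the disjoint union.

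Finally, a composition of bijections is a bijection, so $\mu\circ\psi\circ\lambda$ is a bijection onto $\mathcal{A}$ as claimed, and by construction it sends $\preceq$ first to its field-extension $\trianglelefteq$, then to $(\eta_{\trianglelefteq},\trianglelefteq')$, and then to $(\eta_{\trianglelefteq},\trianglelefteq'\restriction Rv)=(\eta_{\preceq},\preceq')$, since $\eta_{\preceq}=\eta_{\trianglelefteq}$ by the choice of the $\pi_i$ and $\preceq'=\trianglelefteq'\restriction Rv$ because the quasi-order induced on $Rv$ by $\preceq$ and the one induced on $K\nu$ by $\trianglelefteq$ agree on $Rv$ (both arise from taking residues of $v$-integral elements). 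I expect the only mildly delicate point to be the bookkeeping that $\mu$ respects the order/proper-quasi-order split and keeps the $\eta$-component compatible with the domains $\{-1,1\}^I$ versus $\{1\}^I$; everything else is a direct appeal to the results already proved.
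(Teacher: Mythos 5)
Your proposal is correct and follows essentially the same route as the paper, which states Theorem \ref{BaerKrullIV} without a separate proof precisely because it is the composition of the bijection of Theorem \ref{BaerKrullIII} with the restriction/extension correspondence between quasi-orders of support $\{0\}$ on $Rv$ and on $K\nu=\mathrm{Quot}(Rv)$ furnished by the special* hypothesis -- exactly the bookkeeping you carry out for $\lambda$, $\psi$ and $\mu$. One small remark: the sign criterion separating the two cases is $0 \prec -1$ for proper quasi-orders (an order instead satisfies $-1 \prec 0$), not the orientation you quote; this is harmless for your argument, since all you actually use is that the order/proper dichotomy is detected by comparing $-1$ with $0$, both of which lie in $Rv$.
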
 

\begin{remark} \label{2div} In any of our four versions, the Baer-Krull Theorem simplies much further, if the value group $\Gamma_v$ is $2$-divisible, because then $\overline{\Gamma_v} = \Gamma_v/2\Gamma_v$ is trivial, and therefore $I = \emptyset.$ \vspace{1mm}

\noindent
So for instance in Theorem \ref{BaerKrullIV}, if $v$ is special* and $\Gamma_v$ is $2$-divisible, then there is a bijective correspondence

\begin{align*}
\mu \circ \psi \circ \lambda \colon \{\preceq \; \in \mathcal{X}_{\mathfrak{p}}(R)\colon \preceq \textrm{is } v\textrm{-compatible}&\} \to \mathcal{X}_{o}(Rv) \sqcup \mathcal{X}_{p}(Rv), \\ &\; \preceq \; \mapsto \; (\eta,\preceq').
\end{align*}
 
\end{remark}

\noindent
We conclude this paper by deducing Baer-Krull Theorems for ordered, respectively proper quasi-ordered, rings, from Theorem \ref{BaerKrullIV} and Corollary \ref{BaerKrullII}. The former statement immediately implies:

\begin{corollary} \label{BaerKrullorings} $\mathrm{(}$Baer-Krull Theorem for ordered rings $\mathrm{)}$ \\ 
Let $R$ be a commutative ring with $1$ and $v$ a special* valuation on $R.$ Then the map
\begin{align*}
\psi \colon \{\leq \in \mathcal{X}_{\mathrm{o},\mathfrak{q}_v}(R)\colon \leq \textrm{is } v\textrm{-compatible}&\} \to \{-1,1\}^I \times \mathcal{X}_{\mathrm{o},\{0\}}(Rv), \\ &\; \leq \; \mapsto \; (\eta_{\leq},\leq')
\end{align*}
is a bijection.
\end{corollary}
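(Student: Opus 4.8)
The plan is to deduce Corollary \ref{BaerKrullorings} directly from Theorem \ref{BaerKrullIV} by restricting its bijection to those $v$-compatible quasi-orders on $R$ that happen to be orders; the map written $\psi$ in the statement of the corollary is, by a mild abuse of the letter $\psi$ from Theorem \ref{BaerKrullI}, precisely this restriction of $\mu\circ\psi\circ\lambda$. Since $v$ is special* we have $\mathrm{supp}(v)=\mathfrak{q}_v=\mathfrak{p}$, so Theorem \ref{BaerKrullIV} already provides a bijection
\[
\mu\circ\psi\circ\lambda\colon\{\preceq\in\mathcal{X}_{\mathfrak{q}_v}(R)\colon\preceq\textrm{ is } v\textrm{-compatible}\}\to\{-1,1\}^I\times\mathcal{X}_{\mathrm{o},\{0\}}(Rv)\ \sqcup\ \{1\}^I\times\mathcal{X}_{\mathrm{p},\{0\}}(Rv),
\]
the orders (resp.\ proper quasi-orders) appearing in the codomain being exactly those with support $\{0\}$ by construction of $\mu$. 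Hence it suffices to show that this bijection restricts to a bijection from the $v$-compatible orders on $R$ onto the first summand $\{-1,1\}^I\times\mathcal{X}_{\mathrm{o},\{0\}}(Rv)$.

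First I would check the inclusion ``$\subseteq$'': if $\leq\in\mathcal{X}_{\mathrm{o},\mathfrak{q}_v}(R)$ is $v$-compatible, then the second component of its image is an order. Unwinding the three maps, $\leq$ is first extended by $\lambda$ to the quasi-order $\trianglelefteq$ on $K=\mathrm{Quot}(R/\mathfrak{p})$, which is again an order because orders on a domain extend to orders on its quotient field (cf.\ \cite{sim}); then $\psi$ passes to the quasi-order induced on the residue field $K\nu$ of the Manis valuation $\nu$, which is an order by Remark \ref{qoringcompapp}(4); and finally $\mu$ restricts that order to the subring $Rv$, where it is still an order with support $\{0\}$. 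As the first component automatically lies in $\{-1,1\}^I$, the image lands in the first summand.

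The converse ``$\supseteq$'' is the only point needing an argument, and it is the same one used to pass from Theorem \ref{BaerKrullI} to Corollary \ref{BaerKrullII}: given $(\eta^*,\leq^*)$ in the first summand, let $\preceq$ be its unique preimage under the bijection of Theorem \ref{BaerKrullIV}. By the dichotomy \cite[Theorem 3.8]{sim}, $\preceq$ is either an order or a proper quasi-order; in the latter case Remark \ref{qoringcompapp}(4) forces the second component of its image to be a proper quasi-order on $Rv$, contradicting $\leq^*\in\mathcal{X}_{\mathrm{o},\{0\}}(Rv)$. Hence $\preceq$ is an order, so the restricted map is onto the first summand, and being a restriction of a bijection it is automatically injective. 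I do not expect a genuine obstacle here: the work is bookkeeping — verifying that each of $\lambda$, $\psi$, $\mu$ preserves the property of being an order, and invoking the dichotomy to exclude a proper-quasi-ordered preimage.
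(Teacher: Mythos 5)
Your proposal is correct and follows essentially the same route as the paper, which deduces the corollary directly from Theorem \ref{BaerKrullIV} by restricting the bijection $\mu\circ\psi\circ\lambda$ to orders, with Remark \ref{qoringcompapp}(4) (and the dichotomy of \cite[Theorem 3.8]{sim}) guaranteeing that orders correspond exactly to the summand $\{-1,1\}^I\times\mathcal{X}_{\mathrm{o},\{0\}}(Rv)$. You merely spell out the bookkeeping that the paper compresses into ``immediately implies.''
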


\noindent
If $R$ is a field, then this result coincides with Theorem \ref{baerkrullfield}. Further note that if $\Gamma_v$ is $2$-divisible, then Corollary \ref{BaerKrullorings} simplifies in the same manner as explained in Remark \ref{2div}. Moreover, the statement becomes evidently much easier if the domain $Rv$ is uniquely ordered.

\begin{lemma} 
For a domain $R,$ the following are equivalent:
\begin{itemize}
\item[(1)] $R$ is uniquely ordered. \vspace{1mm}

\item[(2)] $0$ is not a sum of non-zero squares and for each $a \in R,$ there exists some non-zero $b$ such that either $ab^2$ or $-ab^2$ is a sum of squares.
\end{itemize}
\begin{proof} In the proof we exploit the fact that $R$ is uniquely ordered if and only if $K:=\mathrm{Quot}(R)$ is uniquely ordered. Note that the latter is equivalent to the fact that for any $a \in K^*,$ either $a$ or $-a$ (and not both) is a sum of squares. \vspace{1mm}

\noindent
We first show that (2) implies (1). So let $\frac{x}{y} \in K^*$ with $x,y \in R.$ Then $xy \in R.$ So there exists some $0 \neq b$ such that (wlog) $xyb^2$ is a sum of squares in $R,$ say $xyb^2 = \sum p_i^2,$ with $p_i \in R.$ Then $$\frac{x}{y} = \sum_i \left(\frac{p_i}{yb}\right)^2$$ is a sum of squares in $K.$ Moreover, 
$-\frac{x}{y}$ is not a sum of squares in $K$, since otherwise $0$ would be a sum of non-zero squares in $R.$ \vspace{1mm}

\noindent 
We conclude by showing that (1) implies (2). So suppose that $R$ is uniquely ordered, i.e. also $K$ is uniquely ordered. Hence, $0$ is not a sum of non-zero squares in $K,$ but then this is also the case in $R.$ Now let $a \in R \subseteq K.$ Then $a$ or $-a$ is a sum of squares, say
$$ \pm a = \sum_i \left(\frac{x_i}{y_i}\right)^2 \quad (x_i,y_i \in R).$$ This yields $\pm a\prod_iy_i^2 = \sum_i \left(x_i\prod_{j\neq i}y_j\right)^2.$ Hence, $b:= (\prod_i y_i)^2$ satisfies (2). 
\end{proof} 
\end{lemma}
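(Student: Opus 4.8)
The statement is an equivalence about a domain $R$ being uniquely ordered, and the natural strategy is to pass to the fraction field $K := \mathrm{Quot}(R)$ and use the classical fact that $R$ is uniquely ordered iff $K$ is, together with the Artin--Schreier-type characterization: $K$ is uniquely ordered iff for every $a \in K^*$, exactly one of $a$, $-a$ is a sum of squares in $K$. (I would record this as the first line of the proof, as it is the workhorse.) With this in hand both implications are short manipulations clearing denominators.

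\medskip

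\noindent\textbf{(2) $\Rightarrow$ (1).} Assume (2). I want to show every $a \in K^*$ has $a$ or $-a$ a sum of squares in $K$, and not both. Write $a = x/y$ with $x, y \in R$, $y \neq 0$. Since $xy \in R$, condition (2) gives a non-zero $b \in R$ with (say) $xyb^2 = \sum_i p_i^2$ a sum of squares in $R$. Then
\[
\frac{x}{y} \;=\; \frac{xyb^2}{y^2b^2} \;=\; \sum_i \left(\frac{p_i}{yb}\right)^2
\]
is a sum of squares in $K$. For the "not both" part: if also $-x/y$ were a sum of squares in $K$, adding the two representations would express $0$ as a sum of squares in $K$, not all zero, hence (clearing denominators) $0$ would be a sum of non-zero squares in $R$, contradicting the first half of (2). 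So $K$ is uniquely ordered, hence so is $R$.

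\medskip

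\noindent\textbf{(1) $\Rightarrow$ (2).} Assume $R$, equivalently $K$, is uniquely ordered. Then $0$ is not a sum of non-zero squares in $K$; since $R \subseteq K$, it is not a sum of non-zero squares in $R$ either. Now fix $a \in R \subseteq K$. Unique ordering gives that $a$ or $-a$ is a sum of squares in $K$, say $\pm a = \sum_i (x_i/y_i)^2$ with $x_i, y_i \in R$, $y_i \neq 0$. Multiplying through by $\prod_i y_i^2$ yields
\[
\pm a \prod_i y_i^2 \;=\; \sum_i \Bigl(x_i \textstyle\prod_{j \neq i} y_j\Bigr)^2,
\]
a sum of squares in $R$, so $b := \prod_i y_i$ (non-zero, as $R$ is a domain) witnesses (2).

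\medskip

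\noindent\textbf{Main obstacle.} There is no deep obstacle here; the only thing to be careful about is the reduction "$R$ uniquely ordered $\iff$ $K$ uniquely ordered" and the Artin--Schreier characterization of unique orderings of a field, which I would either cite or treat as standard. A minor point worth stating explicitly is that $R$ being a domain is used twice: to form $K$, and to conclude $\prod_i y_i \neq 0$ in the last step.
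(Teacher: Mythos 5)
Your proof is correct and follows essentially the same route as the paper: reduce to $K=\mathrm{Quot}(R)$ via the standard fact that $R$ is uniquely ordered iff $K$ is, then clear denominators in both directions using the sum-of-squares characterization. The only (harmless) difference is your witness $b=\prod_i y_i$ versus the paper's $b=(\prod_i y_i)^2$; both satisfy (2), and yours is the more economical choice.
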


\noindent
Our version of the Baer-Krull Theorem allows us to transfer \cite[Corollary 2.2.6]{prestel} to the ring case. In analogy to the field case, we call an ordered ring $(R,\leq)$ Archimedean, if for any $x \in R$ there exists some $n \in \N$ such that $x < n,$ and otherwise non-Archimedean.

\begin{corollary} \hspace{7cm}
\begin{enumerate}
 \item If $R$ carries a non-trivial Manis valuation whose residue class domain is real, then $R$ admits a non-Archimedean ordering. \vspace{1mm}

 \item Conversely, if $R$ carries a non-Archimedean ordering, then $R$ admits a non-trivial valuation with real residue class domain.
\end{enumerate}
\begin{proof} \hspace{7cm}
\begin{enumerate}
 \item The non-Archimedean ordering is derived exactly as in the field case. So let $v$ be a non-trivial Manis valuation on $R,$ and let $\overline{\leq}$ denote an ordering on $Rv.$ Choosing $\eta = 1$ and applying the Baer-Krull Theorem for ordered rings yields an ordering $\leq$ on $R$ such that $v$ and $\leq$ are compatible. Particularly, $R_v$ is a convex subring of $R$ by Theorem \ref{qoringcomp}. Moreover, $R_v \subsetneq R,$ since $v$ is a non-trivial Manis valuation. Now let 
$Z := Z(\leq)$ denote the convex hull of $\Z$ in $R.$ Note that $Z$ is the smallest convex subring of $R,$ so $Z \subseteq R_v \subsetneq R.$ This yields that $\leq$ is a non-Archimedean ordering on $R.$ \vspace{1mm}

 \item For the converse we first consider the case where $R$ is a domain. Let $\leq$ denote a non-Archimedean ordering on $R.$ Then $\leq$ uniquely extends to a non-Archimedean ordering on $K:=\mathrm{Quot}(R).$ From \cite[Corollary 2.2.6]{prestel} follows that $K$ carries a non-trivial valuation $w$ with real residue class field $Kw.$ Note that the restriction $v$ of $w$ to $R$ is a non-trivial (not necessarily Manis) valuation on $R.$ Moreover, the map 
$\phi: R_v \to Kw, x \mapsto x+I_w$ is a ring homomorphism with kernel $I_v,$ so $Rv$ is real as a subring of the real ring $Kw.$ \vspace{1mm}

\noindent
For the general case, note that if $R$ carries a non-Archimedean ordering $\leq,$ then $\overline{x} \leq' \overline{y} :\Leftrightarrow x \leq y$ defines a non-Archimedean ordering on the domain $R/E_0$ (see \cite[Lemma 4.1]{sim}). Hence, there exists a non-trivial valuation $w$ on $R/E_0$ such that $(R/E_0)w$ is real. As was shown in \cite[Lemma 4.4]{sim}, this yields a valuation $v$ on $R$ with support $E_0$ via $v(x) = w(\overline{x}),$ and the value groups of $v$ and $w$ coincide, i.e. $v$ is non-trivial as well. By definition of $v,$ it is easy to see that $Rv$ inherits the order from $(R/E_0)w.$
\end{enumerate}

\end{proof}
\end{corollary}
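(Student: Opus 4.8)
The overall strategy is to read both implications off results already established: the Baer--Krull Theorem for ordered rings, Corollary~\ref{BaerKrullorings}, the compatibility criterion Theorem~\ref{qoringcomp}, the classical field statement \cite[Corollary~2.2.6]{prestel}, and the descent/lifting lemmas for supports from \cite{sim}. No new idea is needed, only careful bookkeeping. For part~(1), let $v$ be a non-trivial Manis valuation on $R$ with $Rv$ real. Since any Manis valuation is special*, Corollary~\ref{BaerKrullorings} applies. As $Rv$ is real it carries an ordering $\overline{\le}$ with support $\{0\}$, and feeding the pair $(\eta,\overline{\le})$ with $\eta\equiv 1$ into the inverse of the Baer--Krull bijection produces a $v$-compatible order $\le$ on $R$ with support $\mathfrak{q}_v$. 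By Theorem~\ref{qoringcomp} the valuation ring $R_v$ is then convex in $(R,\le)$, and it is a proper subring of $R$ because $v$ is a non-trivial Manis valuation (there is $x\in R$ with $v(x)<0$). Since the convex hull $Z$ of $\Z$ in $R$ is the smallest convex subring of $R$, we get $Z\subseteq R_v\subsetneq R$; picking $x\in R\setminus Z$ and, if necessary, replacing $x$ by $-x$ so that $0\le x$, one checks that $n\le x$ for all $n\in\N$ (otherwise $-n\le 0\le x\le n$ would force $x\in Z$), so $\le$ is non-Archimedean.

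For part~(2) I would first reduce to the case where $R$ is a domain: if $\le$ is a non-Archimedean order on $R$ with support $E_0$, then $\overline{x}\le'\overline{y}:\Leftrightarrow x\le y$ is a well-defined order on the domain $R/E_0$ by \cite[Lemma~4.1]{sim}, and it remains non-Archimedean since $n\le x$ forces $n\cdot\overline{1}\le'\overline{x}$. In the domain case, extend $\le$ uniquely to a non-Archimedean order on $K:=\mathrm{Quot}(R)$, apply \cite[Corollary~2.2.6]{prestel} to obtain a non-trivial valuation $w$ on $K$ with real residue field $Kw$, and let $v$ be the restriction of $w$ to $R$. Then $v$ is non-trivial (any $\xi\in K^*$ of nonzero value is a quotient of two elements of $R$, one of which must then have nonzero value), $v$ has support $\{0\}$, and the map $x\mapsto x+I_w$ is a ring homomorphism $R_v\to Kw$ with kernel $I_v=R\cap I_w$; this exhibits $Rv$ as a subring of the real ring $Kw$, so $Rv$ is real. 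Finally, to pass back from $R/E_0$ to $R$, I would invoke \cite[Lemma~4.4]{sim}: the valuation $w$ on $R/E_0$ lifts to a valuation $v$ on $R$ with support $E_0$ and the same value group, hence $v$ is non-trivial, and the canonical surjection $R_v\to (R/E_0)_w$ has kernel $I_v$, so $Rv\cong (R/E_0)w$ inherits the orderings of $(R/E_0)w$ and is in particular real.

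The only delicate point is the last step of part~(1): to conclude that the order manufactured by Baer--Krull is genuinely non-Archimedean one needs that the convex hull of $\Z$ is the smallest convex subring of $R$, that $R_v\neq R$ for a non-trivial Manis valuation $v$, and that an element lying outside that hull witnesses non-Archimedeanness; all of this is elementary once Theorem~\ref{qoringcomp} has supplied the convexity of $R_v$. In part~(2) there is no serious obstacle, the work being organisational: keeping track of the supports along the chain $R\to R/E_0\to\mathrm{Quot}(R/E_0)$ and the back-and-forth of residue rings, together with the trivial remark that a subring of a real ring is real.
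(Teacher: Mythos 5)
Your argument is correct and follows essentially the same route as the paper: part (1) via the Baer--Krull Theorem for ordered rings with $\eta \equiv 1$, convexity of $R_v$ from Theorem \ref{qoringcomp}, and $Z \subseteq R_v \subsetneq R$; part (2) via reduction to the domain $R/E_0$, extension to $\mathrm{Quot}(R/E_0)$, the field result \cite[Corollary 2.2.6]{prestel}, restriction of the valuation, and the lift back through \cite[Lemma 4.4]{sim}. The only difference is that you spell out a few details the paper leaves implicit (the witness of non-Archimedeanness outside the convex hull, and why the restricted valuation is non-trivial), which is harmless.
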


\begin{remark} In the first statement of the previous corollary, the assumption that the valuation is Manis is crucial in order to obtain that $R_v \subsetneq R.$ Note that there are non-trivial special* valuation such that $R_v = R,$ for instance any $p$-adic valuation on $\mathbb{Z}.$ \vspace{1mm}

\noindent 
For the converse, we can not derive surjectivity of $v,$ because the restriction of a field valuation to a subring is in general not Manis. For instance any field valuation restricted to the integers is either trivial or not Manis. since $\Z$ admits no non-trivial Manis valuation. The latter is due to the fact that the triangle inequality yields $v(n) \geq 0$ for any natural number $n.$ 
\end{remark}

\noindent
The Baer-Krull Theorem for quasi-ordered rings also gives rise to a characterization of all Manis valuations $w$ on $R,$ that are finer than $v,$ if we additionally assume that $v$ is non-trivial.

\begin{corollary} \label{BaerKrullqoringsI} $\mathrm{(}$Baer-Krull Theorem for proper quasi-ordered rings I $\mathrm{)}$ \\ 
Let $R$ be a commutative ring with $1$ and $v$ a special* valuation on $R.$ Then the map
\begin{align*}
\psi \colon \{\preceq_w \in \mathcal{X}_{\mathrm{p},\mathfrak{q}_v}(R)\colon \preceq_w \textrm{is } v\textrm{-compatible}&\} \to \mathcal{X}_{\mathrm{p},\{0\}}(Rv), \\ &\; \preceq \; \mapsto \; \preceq'
\end{align*}
is a bijection.

\end{corollary}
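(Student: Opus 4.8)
The plan is to obtain this statement simply by restricting the Baer-Krull Theorem IV (Theorem \ref{BaerKrullIV}) to the proper quasi-orders. Since $v$ is special*, Theorem \ref{BaerKrullIV} already provides a bijection from $\{\preceq\in\mathcal{X}_{\mathfrak{q}_v}(R)\colon\preceq\textrm{ is }v\textrm{-compatible}\}$ onto
\[
\{-1,1\}^I\times\mathcal{X}_{\mathrm{o},\{0\}}(Rv)\;\sqcup\;\{1\}^I\times\mathcal{X}_{\mathrm{p},\{0\}}(Rv),
\]
sending $\preceq$ to $(\eta_{\preceq},\preceq')$. So the whole task reduces to showing that this bijection carries the \emph{proper} quasi-orders on $R$ exactly onto the second summand $\{1\}^I\times\mathcal{X}_{\mathrm{p},\{0\}}(Rv)$, and then to discard the one-point factor $\{1\}^I$.

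For the correspondence I would argue as follows. If $\preceq\,=\,\preceq_w$ is a proper quasi-order, then $w(x)\le\infty=w(0)$ for every $x\in R$, so every element of $R$ is $\succeq 0$; in particular $0\preceq\pi_i$ for all $i\in I$, whence $\eta_{\preceq}\equiv 1$, and $\preceq'$ is again a proper quasi-order by Remark \ref{qoringcompapp}(4). Hence $(\eta_{\preceq},\preceq')$ lies in the second summand. Conversely, let $\preceq$ be $v$-compatible with $(\eta_{\preceq},\preceq')$ in the second summand, so that $\preceq'$ is proper. By the dichotomy \cite[Theorem 3.8]{sim}, $\preceq$ is either an ordered ring or a proper quasi-ordered ring, and the two cases are mutually exclusive (an order satisfies $-1\prec 0$, whereas $\preceq_w$ satisfies $0\prec -1$ since $w(-1)=w(1)=0<\infty=w(0)$). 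If $\preceq$ were an order, then $\preceq'$ would be an order by Remark \ref{qoringcompapp}(4), contradicting that $\preceq'$ is proper; therefore $\preceq$ is a proper quasi-order. Thus the domain of the Theorem \ref{BaerKrullIV} bijection splits along the dichotomy into orders and proper quasi-orders precisely as its codomain splits into the two summands, so restricting the bijection to the proper part is a bijection onto $\{1\}^I\times\mathcal{X}_{\mathrm{p},\{0\}}(Rv)$.

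To conclude, I would note that $\{1\}^I$ is a singleton, so $(1,\preceq')\mapsto\preceq'$ is a bijection $\{1\}^I\times\mathcal{X}_{\mathrm{p},\{0\}}(Rv)\to\mathcal{X}_{\mathrm{p},\{0\}}(Rv)$; composing it with the restricted bijection above yields exactly the map $\preceq_w\mapsto\preceq'$ of the statement. There is no substantial obstacle here: the only point requiring a little care is lining up the dichotomy for quasi-ordered rings with the two-summand structure of the target set — that is, checking that ``proper on $R$'' and ``proper on $Rv$'' correspond in both directions and that orders and proper quasi-orders are genuinely disjoint — after which the result is a purely formal consequence of Theorem \ref{BaerKrullIV}.
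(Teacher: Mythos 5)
Your proposal is correct and is essentially the paper's own (implicit) derivation: the paper deduces this corollary by restricting Theorem \ref{BaerKrullIV} to the proper quasi-orders, using that $\eta_{\preceq}\equiv 1$ for a proper quasi-order, that properness/orderedness is preserved by $\preceq\mapsto\preceq'$ (Remark \ref{qoringcompapp}(4)), and the dichotomy to see the domain splits exactly as the codomain does, after which the singleton factor $\{1\}^I$ is discarded. Your spelled-out check that orders and proper quasi-orders are mutually exclusive (via the sign of $-1$) is exactly the point the paper leaves tacit.
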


\noindent
Now recall from Theorem \ref{qoringcomp} and Remark \ref{qoringcompapp}(4) that if $\preceq = \preceq_w$ is $v$-compatible, then $\preceq' = \preceq_{w/v}$ (see Remark \ref{qoringcompapp}(4) for the proof and a definition of $w/v$. Further note that the Manis property is not required for deducing (4) from (1) in Theorem \ref{qoringcomp}). This allows us to reformulate the previous corollary more precisely (see Corollary \ref{BaerKrullqoringsII}).

\begin{lemma} \label{qoringcompapp2} Let $(R,v)$ be a valued ring for some Manis valuation $v$ on $R,$ and let $w$ be a valuation on $R$ such that $\preceq_w$ is $v$-compatible and $\mathfrak{q}_v = \mathfrak{q}_w.$ Then $w$ is 
Manis if and only if $w/v$ is Manis.
\begin{proof} If $u$ is some arbitrary valuation of $R,$ then $u(R \backslash \mathfrak{q}_u)$ is additively closed by axiom (V3) of Definition \ref{ringval}. So in order to show that $u$ is Manis, it suffices to prove that $u(R \backslash \mathfrak{q}_u)$ is closed under additive inverses. \vspace{1mm}

\noindent
Suppose that $w$ is Manis. Let $\gamma := w/v(\overline{a}) \in \Gamma_{w/v}$ be arbitrary, $a \in U_v.$ Then $w/v(\overline{a}) = w(a).$ Since $w$ is Manis, there exists some $b \in R$ such that $w(b) = -w(a).$ Thus, $w(ab) = 0 = w(1).$ By $v$-compatibility of $\preceq_w,$ we obtain that also $v(ab) = 0.$ Since $a \in U_v,$ also $b \in U_v.$ Therefore, it holds $w/v(\overline{b}) = w(b) = -\gamma \in \Gamma_{w/v}.$ \vspace{1mm}

\noindent
Now assume that $w/v$ is Manis, and let $a \in R$ such that $w(a) =: \gamma \in \Gamma_w.$ We show that there exists some $b \in R$ with $w(b) = -\gamma.$ Note that $a \notin \mathfrak{q}_v,$ since $\mathfrak{q}_v = \mathfrak{q}_w.$ Since $v$ is Manis, we find some $y \in R$ such that $ay \in U_v.$ So 
$w/v(\overline{ay}) = w(ay) =: \gamma_1.$ By surjectivity of $w/v,$ there exists some $z \in R$ such that $w/v(\overline{z}) = w(z) = -\gamma_1.$ Therefore, 
$w(z) = -w(a)-w(y).$ This yields $w(yz) = -w(a) = -\gamma,$ i.e. $b = yz.$ 
\end{proof}

\end{lemma}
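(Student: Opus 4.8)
The plan is to use the standard reduction that a valuation $u$ on $R$ is Manis exactly when the set $u(R\setminus\mathfrak{q}_u)$ is closed under additive inverses. Indeed, by axiom (V3) this set is always closed under addition, and it contains $0=u(1)$; so as soon as it is also closed under negation it is a subgroup of $\Gamma_u$, and since by definition it generates $\Gamma_u$, it must coincide with $\Gamma_u$, i.e. $u$ is surjective. Thus for each of the two implications it suffices, given a value, to exhibit an element realizing its negative.

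The engine of the argument is the consequence of $v$-compatibility. Since $x\preceq_w y\Leftrightarrow w(y)\le w(x)$ and every element of $R$ is $\preceq_w$-nonnegative, the condition of Definition \ref{ringcomp} unwinds to: $w(p)\le w(q)\Rightarrow v(p)\le v(q)$ for all $p,q\in R$. Applying this in both directions to an element $c$ with $w(c)=0=w(1)$ gives $v(c)=0$; hence $U_w\subseteq U_v$. Together with the hypothesis $\mathfrak{q}_v=\mathfrak{q}_w$ and the defining formula $w/v(\overline a)=w(a)$ for $a\in U_v$ (recall also that, by Remark \ref{qoringcompapp}(4), the support of $w/v$ is $\{0\}$, so $w/v$ is finite precisely on the classes $\overline a$ with $a\in U_v$), this is all that is needed.

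For the forward implication assume $w$ is Manis and let $\gamma=w/v(\overline a)=w(a)$ with $a\in U_v$. Choose $b\in R$ with $w(b)=-w(a)$; then $w(ab)=0$, so $ab\in U_w\subseteq U_v$, and since $v(a)=0$ also $v(b)=0$, i.e. $b\in U_v$. Hence $w/v(\overline b)=w(b)=-\gamma$, which shows that the set of finite values of $w/v$ is closed under negation, so $w/v$ is Manis. Conversely assume $w/v$ is Manis and let $\gamma=w(a)\in\Gamma_w$, so $a\notin\mathfrak{q}_w=\mathfrak{q}_v$. Since $v$ is Manis, pick $y\in R$ with $v(y)=-v(a)$, so that $ay\in U_v$ and $w/v(\overline{ay})=w(ay)=:\gamma_1\in\Gamma_{w/v}$. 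By surjectivity of $w/v$ there is $z$ with $w/v(\overline z)=-\gamma_1$; such $z$ lies in $U_v$ (its class is nonzero) and satisfies $w(z)=-w(a)-w(y)$, whence $w(yz)=-w(a)=-\gamma$. So the set of finite values of $w$ is closed under negation and $w$ is Manis.

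I do not expect a serious obstacle here; the one point requiring care is that $\Gamma_w$ and $\Gamma_{w/v}$ are a priori different groups, so in each direction one must genuinely transport a ``negation witness'' across the residue map rather than treat $w$ and $w/v$ interchangeably --- and producing that witness is precisely what $v$-compatibility (in the form $U_w\subseteq U_v$) together with the Manis property of $v$ makes possible.
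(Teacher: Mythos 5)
Your proposal is correct and follows essentially the same route as the paper: reduce the Manis property to closure of the finite value set under negation via (V3), then in the forward direction transfer a witness $b$ with $w(b)=-w(a)$ into $U_v$ using $v$-compatibility (i.e.\ $U_w\subseteq U_v$), and in the converse direction use the Manis property of $v$ to move into $U_v$ and pull a negation witness back through $w/v$. The only difference is cosmetic: you isolate $U_w\subseteq U_v$ as an explicit consequence of compatibility, which the paper applies inline.
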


\begin{corollary} \label{BaerKrullqoringsII} $\mathrm{(}$Baer-Krull Theorem for proper quasi-ordered rings II $\mathrm{)}$ \\ 
Let $R$ be a commutative ring with $1$ and $v$ a Manis valuation on $R.$ Then the map
\begin{align*}
\psi \colon \{w \colon w \textrm{ Manis}, \, \preceq_w v\textrm{-comp.}, \, \mathfrak{q}_w = \mathfrak{q}_v &\} \to \{u \colon u \textrm{ Manis val. on } Rv, \, \mathfrak{q}_{u} = \{0\}\}, \\ &\, w \mapsto w/v
\end{align*}
is a bijection.
\begin{proof} We deduce this corollary from Corollary \ref{BaerKrullqoringsI}. As mentioned above, if $\preceq = \preceq_w$ is a proper quasi-order compatible with $v,$ then $\preceq' = \preceq_{w/v}.$ Moreover we have shown in the previous lemma that $w$ is Manis if and only if $w/v$ is Manis. So we may restrict both the domain and co-domain of $\psi$ to proper quasi-orders that come from a Manis valuation. 
\end{proof}
\end{corollary}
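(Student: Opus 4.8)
The plan is to deduce the statement directly from Corollary~\ref{BaerKrullqoringsI}, Remark~\ref{qoringcompapp}(4), and Lemma~\ref{qoringcompapp2}, the only real work being the identification of proper quasi-orders with valuations. First, since $v$ is a Manis valuation it is special* (every Manis valuation is special*), so Corollary~\ref{BaerKrullqoringsI} applies and yields a bijection
\[
\psi_0 \colon \{\preceq_w \in \mathcal{X}_{\mathrm{p},\mathfrak{q}_v}(R)\colon \preceq_w \textrm{ is } v\textrm{-compatible}\} \to \mathcal{X}_{\mathrm{p},\{0\}}(Rv), \qquad \preceq \;\mapsto\; \preceq'.
\]

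Next I would rewrite the domain and co-domain of $\psi_0$ as sets of valuations. Under the convention that equivalent valuations are identified (as fixed before Proposition~\ref{lemmarank}), a proper quasi-order on $R$ is exactly one of the form $\preceq_w$ for a valuation $w$ on $R$, and its support $E_0$ equals $\mathfrak{q}_w$ (since $x \sim_w 0 \Leftrightarrow w(x) = \infty$); hence $\preceq_w$ lies in $\mathcal{X}_{\mathrm{p},\mathfrak{q}_v}(R)$ precisely when $\mathfrak{q}_w = \mathfrak{q}_v$, and the correspondence $w \mapsto \preceq_w$ is a genuine bijection onto that set. Likewise $\mathcal{X}_{\mathrm{p},\{0\}}(Rv)$ is identified with the set of valuations $u$ on the domain $Rv$ with $\mathfrak{q}_u = \{0\}$. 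Now Remark~\ref{qoringcompapp}(4) shows that whenever $\preceq_w$ is $v$-compatible one has $\preceq_w' = \preceq_{w/v}$, with $w/v$ of support $\{0\}$; crucially, as noted in the remarks following Corollary~\ref{BaerKrullqoringsI}, the implication $(1)\Rightarrow(4)$ of Theorem~\ref{qoringcomp} needed for this does not require $w$ to be Manis. Translating $\psi_0$ through these identifications therefore turns it into the assignment $w \mapsto w/v$, so $\psi$ (before any Manis restriction) is a bijection between $\{w\colon \preceq_w \textrm{ is } v\textrm{-compatible},\ \mathfrak{q}_w = \mathfrak{q}_v\}$ and $\{u\colon u \textrm{ valuation on } Rv,\ \mathfrak{q}_u = \{0\}\}$.

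Finally I would restrict to the Manis elements on both sides. By Lemma~\ref{qoringcompapp2}, for $w$ with $\preceq_w$ $v$-compatible and $\mathfrak{q}_w = \mathfrak{q}_v$, the valuation $w$ is Manis if and only if $w/v$ is Manis. Hence the bijection $w \mapsto w/v$ carries the subset of Manis $w$ onto the subset of Manis $u$, which is exactly the asserted bijection.

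I do not expect a serious obstacle here: the statement is a repackaging of results already in hand. The one point requiring care is bookkeeping — keeping the convention that identifies equivalent valuations (so that $w \mapsto \preceq_w$ is bijective and not merely surjective), checking that the hypothesis of Corollary~\ref{BaerKrullqoringsI} is met because $v$ Manis implies $v$ special*, and noting that the definition of $w/v$ and the identity $\preceq_w' = \preceq_{w/v}$ are available without assuming $w$ itself is Manis, so that the two restrictions to Manis valuations match up under $\psi$.
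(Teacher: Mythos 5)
Your proposal is correct and follows essentially the same route as the paper: apply Corollary \ref{BaerKrullqoringsI} (using that $v$ Manis implies special*), identify $\preceq_w'$ with $\preceq_{w/v}$ via Remark \ref{qoringcompapp}(4), and then restrict both sides to Manis valuations using Lemma \ref{qoringcompapp2}. Your added bookkeeping about identifying equivalent valuations and the fact that $\preceq_w' = \preceq_{w/v}$ does not require $w$ to be Manis is exactly the reasoning the paper relies on (implicitly and in the text preceding the corollary).
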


\noindent
Since $v$ and $w$ are both Manis and $\preceq_w$ is compatible with $v,$ it follows via Lemma \ref{coarse} that the previous corollary characterizes precisely all Manis refinements $w$ of $v,$ if the valuation $v$ 
(and then also $w$) is non-trivial. \vspace{1mm}

\vspace{4mm}

\textsc{Fachbereich Mathematik und Statistik, Universit\"at Konstanz},

78457 \textsc{Konstanz, Germany}, 

E-mail address: salma.kuhlmann@uni-konstanz.de 

E-mail address: simon.2.mueller@uni-konstanz.de

\begin{thebibliography}{88}
\bibitem{bo}
Bourbaki, N., \textit{Alg\`{e}bre commutative}, Chap. 1-7, Hermann Paris, 1961-1965.
\bibitem{efrat}
Efrat, I., \textit{Valuations, Orderings, and Milnor K-Theory}, Amer. Math. Soc. Mathematical Surveys and Monographs, Vol. \textbf{124}, 2006.
\bibitem{prestel}
Engler, A. J., Prestel, A., \textit{Valued Fields}, Springer Monographs in Mathematics, 2005.
\bibitem{fakh}
Fakhruddin, S.M., \textit{Quasi-ordered fields}, Journal of Pure and Applied Algebra \textbf{45}, 207-210, 1987.
\bibitem{grif}
Griffin, M., \textit{Valuations and Pr\"ufer Rings}, Can. J. Math., Vol. XXVI, No. \textbf{2}, 1974, pp. 412-429.
\bibitem{kneb}
Knebusch, M., Zhang, Digen, \textit{Manis Valuations and Pr\"ufer Extensions I - A new chapter in commutative algebra}, Springer LNM \textbf{1791}, 2002.
\bibitem{kuhl}
Kuhlmann, S., Matusinski, M. and Point, F., \textit{The valuation difference rank of a quasi-ordered difference field}, to appear in: Groups, Modules and Model Theory - Surveys and Recent Developments
in Memory of Rüdiger Göbel, Springer Verlag, 2017.
\bibitem{lam}
Lam, T. Y., \textit{Orderings, valuations and quadratic forms}, Amer. Math. Soc. Regional Conference Series in Math. \textbf{52}, Providence (1983).
\bibitem{manis}
Manis, E. M., \textit{Valuations on a commutative ring}, Proc. Amer. Math. Soc. \textbf{20}, 1969, pp. 193-198.
\bibitem{marsh}
Marshall, M., \textit{Positive Polynomials and Sums of Squares}, American Mathematical Society,
Mathematical Surveys and Monographs, Volume \textbf{146}, 2008.
\bibitem{sim} 
M\"uller, S., \textit{Quasi-ordered rings}, Communications in Algebra \textbf{46}, Issue: 11, pages 4979-4985.
\bibitem{powers}
Powers, V., \textit{Valuations and Higher Level Orders in Commutative Rings,} Journal of Algebra \textbf{172}, 255-272, 1995.
\bibitem{valente}
Valente, K.G, \textit{On Orderings, Valuations and $0$-Primes of a Commutative Ring}, Rocky Mountain Journal of Mathematics, Volume \textbf{19}, No. 3, Summer 1989.
\end{thebibliography}
\end{document}